\documentclass[a4paper,11pt]{article}
\usepackage{inputenc}
\usepackage{authblk}
\usepackage[table]{xcolor}
\usepackage{color}
\usepackage{amsmath}
\usepackage{amssymb}
\usepackage{amsthm}
\usepackage{booktabs}
\usepackage{xcolor,graphicx,float}
\usepackage{hyperref}
\usepackage{soul}
\usepackage[numbers,sort&compress]{natbib}
\usepackage{indentfirst}
\hypersetup{colorlinks=true,
	linkcolor=blue,
	anchorcolor=blue,
	citecolor=blue}
\usepackage{enumitem}
\setlist[itemize]{itemsep=0pt, topsep=0pt}
\usepackage[
a4paper,
textwidth=16cm,
textheight=23cm,
centering
]{geometry}

\theoremstyle{plain}
\newtheorem{theorem}{\bf Theorem}[section]
\newtheorem{Lemma}[theorem]{\bf Lemma}
\newtheorem{proposition}[theorem]{\bf Proposition}

\newtheorem{conjecture}{\bf Conjecture}

\theoremstyle{remark}

\newtheorem{remark}{\bf Remark}

\numberwithin{equation}{section}

\title{\bf\Large On the Frankl--Tokushige conjecture and almost complete $r$-cross $t$-intersection theorems for vector spaces}
\author[1]{Yao Li\thanks{E-mail: \text{yaoli@mail.bnu.edu.cn}}}
\author[1]{Benjian Lv\thanks{Corresponding author. E-mail: \text{bjlv@bnu.edu.cn}}}
\author[1]{Jie Wen\thanks{ E-mail: \text{jwen@mail.bnu.edu.cn}}}
\affil[1]{\small Laboratory of Mathematics and Complex Systems (Ministry of Education), School of Mathematical Sciences, Beijing Normal University, Beijing 100875, China}
\date{}

\begin{document}
	\renewcommand{\baselinestretch}{1.2}
	\maketitle
	\begin{abstract}
Let $r\geq3$ and $k_1\geq k_2\geq\cdots\geq k_r\geq t$. Let $\mathcal{F}_1,\mathcal{F}_2,\ldots,\mathcal{F}_r$ be families of subspaces, of respective dimensions $k_1,k_2,\ldots,k_r$, in an $n$-dimensional vector space over the finite field $\mathbb{F}_q$. The $r$ families are called $r$-cross $t$-intersecting if $\dim \left(F_{1} \cap F_{2} \cap \cdots \cap F_{r}\right) \geq t$ for all $F_{i} \in \mathcal{F}_{i}, i = 1,2,\dots,r$. In 2016, Frankl and Tokushige conjectured that $\prod_{i=1}^{r}|\mathcal{F}_i|\leq\prod_{i=1}^{r}{n-1\brack k_i-1}$ for $t=1$ and $n\geq rk_1/(r-1)$. The appealing conjecture suggests establishing intersection theorems for $n\sim ck_1$ with $c=c(r)\in(1,2)$, a direction that has long been challenging.
		
In this paper, we overcome this barrier by proving that
$$\prod_{i=1}^{r}|\mathcal{F}_i|\leq\prod_{i=1}^{r}{n-t\brack k_i-t}\;\;\mbox{for all}\;\;t\geq1\;\mbox{and}\;n\geq rk_1/(r-1)+C(t,r),$$
where $C(t,r)=rt/(r-1)+1$. This proves the Frankl--Tokushige conjecture except for at most three values of $n$, and establishes an Erd\H{o}s--Ko--Rado type theorem for almost all values of parameters. Furthermore, we characterize all extremal configurations. Our proof is purely combinatorial and based on the $t$-cover method, with several essential refinements. We also obtain almost complete intersection theorems for $r$-wise $t$-intersecting families and non-trivial $r$-cross $t$-intersecting families.

		\medskip
		\noindent {\em AMS classification:}\;05D05
		
		\noindent {\em Keywords:}\;Frankl--Tokushige conjecture;\;$r$-cross $t$-intersecting families;\;Vector space;\;$t$-cover
		
	\end{abstract}
\section{Introduction}
The seminal Erd\H{o}s--Ko--Rado theorem \cite{EKR} initiated the study of intersection problems, which has since developed into a longstanding and active area of research in extremal set theory. The notion of intersection arises naturally for many combinatorial objects (see, e.g., \cite{Ellis-2011,Ellis-2012,Erdos-Szekely-2000,Frankl-Furedi,Frankl-Kupavskii-2018,Frankl-Wilson-1986,Ihringer,Ku-Renshaw-2008,Meagher-Sin,Moon,Scott}), and their study has led to a variety of works. For a systematic
introduction to intersection problems, we refer the reader to the surveys \cite{Frankl-2016,Ellis-book} and the monographs \cite{Godsil-Meagher,Frankl-2018}. 

In this paper, we study intersection problems for vector spaces. We first introduce some notation. Write $[r]:=\{1,2,\ldots,r\}$ the standard $r$-set. Let $V$ be an $n$-dimensional vector space over the finite field $\mathbb{F}_q$, where $q$ is a prime power. Let ${V\brack k}$ denote the set of all $k$-dimensional subspaces of $V$. We abbreviate `$k$-dimensional subspace' as `$k$-subspace'. The \emph{Gaussian binomial coefficient} is defined as
$$
{a\brack b} = \prod_{i=0}^{b-1}\frac{q^{a-i}-1}{q^{b-i}-1}.
$$
For convenience, we suppress the dependence on $q$ in the notation, and set ${a\brack 0}=1$ and ${a\brack c} =0$ if $c$ is negative. It is well known that ${n\brack k}$ counts the size of ${V\brack k}$. A family $\mathcal{F}\subseteq{V\brack k}$ is called \emph{$t$-intersecting} if $\dim(A\cap B)\geq t$ for all $A,B\in\mathcal{F}$. We say that $r$ families $\mathcal{F}_{1} \subseteq 
{ V \brack k_{1}}, \mathcal{F}_{2} \subseteq
{ V \brack k_{2}}, \ldots, \mathcal{F}_{r} 
\subseteq { V \brack k_{r}}$ are \emph{$r$-cross $t$-intersecting} if $\dim\left(F_{1} \cap F_{2} \cap \cdots \cap F_{r}\right) \geq t$ for all $F_{i} \in \mathcal{F}_{i}, i=1,2,\ldots,r$. When $r=2$, we simply say that $\mathcal F_1$ and $\mathcal F_2$ are cross $t$-intersecting.

The Erd\H{o}s--Ko--Rado theorem for vector spaces determines the maximum-sized $t$-intersecting families in ${V\brack k}$. More precisely, the theorem states that, for $n \geq 2k\geq2t$, every $t$-intersecting family has size at most ${n-t\brack k-t}$. Moreover, if $\mathcal{F}$ is $t$-intersecting and attains the bound, then either $\mathcal{F}$ consists of all $k$-subspaces containing a fixed $t$-subspace, or $n=2k$ and $\mathcal{F}$ consists of all $k$-subspaces contained in a fixed  $(2k-t)$-subspace. It was proved by Hsieh \cite{Hsieh-1975-1} in the 1970s for $n\geq2k+1$ and $(n,q)\neq(2k+1,2)$. The problem for the remaining parameter values turned out to be much more challenging, and the upper bound valid for all $n\geq2k$ was proved by Frankl and Wilson \cite{Frankl-Wilson-1986} via an ingenious linear algebra proof. They stated that for $n=2k$ it appeared likely that there were exactly two types of extremal families, and this was eventually verified by Tanaka \cite{Tanaka-2006} using deep machinery from the theory of association schemes.

The problem of investigating $r$-cross $t$-intersecting families with a large product of sizes has been extensively explored, yielding several insightful results and methods. Let $k_1\geq k_2\geq\cdots\geq k_r\geq t$, and let  $\mathcal{F}_{1}\subseteq{ V \brack k_{1}}$, $\mathcal{F}_{2}\subseteq{ V \brack k_{2}},\ldots,\mathcal{F}_{r}\subseteq{ V \brack k_{r}}$ be $r$-cross $t$-intersecting. Tokushige \cite{Tokushige-2013} determined the families maximizing $|\mathcal{F}_1||\mathcal{F}_2|$ for $r=2$ and $k_1=k_2$ via the eigenvalue method. Suda and Tanaka \cite{Suda-Tanaka} introduced a method based on semidefinite programming, and settled the problem for all $k_1$ and $k_2$ when $r=2, t=1$ and $n\geq\max\{2k_1,2k_2\}$. The method was then applied to several extremal problems (see, e.g., \cite{Suda-Tanaka-Tokushige,Tanaka-Tokushige}). For general $r\geq2$, Cao, Lu, Lv and Wang \cite{Cao-Lu-Lv-Wang} determined the families with maximum product of sizes for all $t\geq1$ and $n\geq k_1+k_2+t+1$. Wen and Lv \cite{Wen-Lv-vector} subsequently reduced the required lower bound to $n\geq k_1+k_r+1$ and $(n,q)\neq(k_1+k_r+1,2)$.

It has long been a challenging problem to establish intersection theorems when $n$ is at most $c$ ($c\in(1,2)$) times the largest dimension of the subspaces under  consideration. A central open problem in this direction is the following conjecture proposed by Frankl and Tokushige \cite{Frankl-2016}. We note that the case $r=2$ follows from the theorem of Suda and Tanaka \cite{Suda-Tanaka} mentioned above.
\begin{conjecture}[\cite{Frankl-2016}] \label{conj:1}
Let $n\geq\frac{r}{r-1}\cdot\max\{k_1,\ldots,k_r\}$. 
	If $\mathcal{F}_{1} \subseteq
	{ V \brack k_{1}},\ldots, 
	\mathcal{F}_{r} \subseteq { V \brack k_{r}}$ are $r$-cross $1$-intersecting, 
	then $\prod_{i=1}^{r}\left|\mathcal{F}_{i}\right| \leq \prod_{i=1}^{r}
	{ n-1 \brack k_{i}-1}$.
\end{conjecture}
\subsection{Main results}
In this paper, we overcome the barrier by proving the following  Erd\H{o}s--Ko--Rado type theorem for $r$-cross $t$-intersecting families. By setting $t=1$, we prove Conjecture \ref{conj:1} for all but at most three values of $n$. 
\begin{theorem} \label{theorem:1}
	Let $r\geq 3$, $k_{1}\geq k_{2}\geq\dots \geq k_{r} \geq t$ and $n \geq \frac{r}{r-1}\cdot(k_{1}+t)+1$. If $\mathcal{F}_{1} \subseteq { V \brack k_{1}}, \mathcal{F}_{2} \subseteq { V \brack k_{2}},\dots,  \mathcal{F}_{r} \subseteq { V \brack k_{r}}$ are $r$-cross $t$-intersecting, then
	$$
	\prod_{i =1}^{r}|\mathcal{F}_{i}| \leqslant \prod_{i =1}^{r}{ n-t \brack k_{i}-t}.
	$$
Equality holds if and only if $\mathcal{F}_{i}=\{ F \in {V \brack k_{i}} : T \subseteq F\}$ {\rm ($i=1,2,\ldots,r$)} for some $T \in {V \brack t}$.
\end{theorem}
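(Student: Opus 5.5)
We argue via the $t$-cover method. For a family $\mathcal{G}$ of subspaces, call $T$ a \emph{$t$-cover} of $\mathcal{G}$ if $\dim(T\cap G)\geq t$ for all $G\in\mathcal{G}$, and let $\tau_t(\mathcal{G})$ be the minimum dimension of a $t$-cover. We assume every $\mathcal{F}_i$ is nonempty, since otherwise there is nothing to prove.

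\textbf{Step 1 (reduction to a bound on $\tau_t$).} For each $i$, fix arbitrary $F_j\in\mathcal{F}_j$ for $j\neq i$. Then $W_i:=\bigcap_{j\neq i}F_j$ is a $t$-cover of $\mathcal{F}_i$. The subspace $W_i$ can be much larger than $t$, so this alone does not suffice. Call $\mathcal{F}_i$ \emph{trivial} if it has a $t$-cover of dimension $t$, i.e.\ all its members contain a common $t$-subspace $T$.

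\textbf{Step 2 (all families trivial).} Suppose every $\mathcal{F}_i$ is trivial, with centres $T_i$. If some $\mathcal{F}_i$ has size at least two, the $r$-cross condition forces compatibility of the centres. If all the $T_i$ coincide, then $|\mathcal{F}_i|\le {n-t\brack k_i-t}$ for every $i$, giving the bound, with equality exactly for the full stars. Otherwise we must compare with configurations in which some family is tiny; the Gaussian estimates below handle this.

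\textbf{Step 3 (some family non-trivial).} Choose $i$ with $\tau_t(\mathcal{F}_i)=m>t$ and a minimal $t$-cover. Apply the standard counting lemma: a family of $k$-subspaces that is $t$-covered by an $s$-subspace $S$ but has no $t$-cover of dimension $s-1$ inside $S$ has size at most ${s\brack t}{n-t\brack k-t}\cdot\theta$, where $\theta$ is the $q$-power factor coming from the branching argument (each step of descent picks a subspace meeting the current cover in fewer than $t$ dimensions, contributing ${k\brack 1}$ choices and reducing the codimension). More usefully, we bound each $|\mathcal{F}_j|$ relative to ${n-t\brack k_j-t}$. The crucial quantity is
\[
\dim\Bigl(\bigcap_{j}F_j\Bigr)\ \ge\ \sum_j k_j-(r-1)n.
\]
Under $n\ge \frac{r}{r-1}(k_1+t)+1$ this lower bound is negative, so the intersection condition is genuinely restrictive. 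We fix $F_2,\dots,F_r$ such that $W=\bigcap_{j\ge2}F_j$ has dimension as small as possible, say $w$. Minimality, iterated over the families, gives $w\le k_1-(r-2)(n-k_1)+\ldots$, which is roughly $\le (r-1)k_1-(r-2)n$ and hence small. Every $F_1\in\mathcal{F}_1$ must then meet $W$ in dimension at least $t$, so
\[
|\mathcal{F}_1|\le {w\brack t}{n-t\brack k_1-t}.
\]
Symmetric arguments bound the other families. The product bound then reduces to showing
\[
\prod_i \frac{|\mathcal{F}_i|}{{n-t\brack k_i-t}}<1
\]
whenever some family is non-trivial. A non-trivial family loses a factor of roughly $q^{-(n-k_i-t)}$, via the bound ${n-t-1\brack k-t-1}/{n-t\brack k-t}\approx q^{-(n-k)}$, while the others gain at most ${w\brack t}\approx q^{t(w-t)}$.

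\textbf{Main obstacle.} The hard step is the balance in Step 3: controlling $w$ by roughly $(r-1)k_1-(r-2)n+O(t)$ and checking that the gains $q^{t(w-t)}$ summed over the $r-1$ other families are dominated by the loss $q^{-(n-k_1-t)}$. This is exactly where the threshold $\frac{r}{r-1}(k_1+t)+1$ enters, since $(r-1)(w-t)$ versus $n-k_1-t$ yields $n(r-1)\gtrsim r k_1+rt$. I would first try an averaging or greedy choice of the $F_j$ to minimize $w$, using the fact that the $F_j$ within $\mathcal{F}_j$ cannot all share a large common subspace without $\mathcal{F}_j$ being small. For equality, the strict inequality in the non-trivial case leaves only trivial families with a common centre $T$ that are full stars.
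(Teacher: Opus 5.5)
Steps 1 and 2 are fine. Step 1 is the paper's observation that members of $\mathcal{G}_i$ are $t$-covers of $\mathcal{F}_i$. Step 2 is simpler than you make it: a family with a $t$-cover of dimension $t$ lies in a star, so $|\mathcal{F}_i|\le{n-t\brack k_i-t}$ outright.

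The gap is Step 3, which carries the whole content of the theorem but is only a heuristic. Its ingredients would not work.

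(a) Minimality gives no upper bound on $w=\min\dim\bigl(\bigcap_{j\ge2}F_j\bigr)$. The families may cluster around a large common subspace. Even for the extremal stars with $k_1=\cdots=k_r$ near the threshold, $w-t$ is of order $\frac{k_1}{r-1}$ for large $k_1$. So $w$ is not ``small'', and $|\mathcal{F}_1|\le{w\brack t}{n-t\brack k_1-t}$ overshoots by a factor exponential in $tk_1/(r-1)$.

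(b) The accounting is backwards. A non-trivial family need not lose anything: $\{F\in{V\brack k}:\dim(F\cap T)\ge t\}$ with $T\in{V\brack t+1}$ has no common $t$-subspace, yet has size $g(k,t+1)>q^t{n-t\brack k-t}$ (Lemma \ref{lem:8.3}). The loss has to come from the \emph{other} families, and your sketch has no mechanism for extracting it.

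(c) A descent whose branching factor per step is ${k\brack 1}\approx q^{k-1}$ beats the per-step loss $\approx q^{-(n-k)}$ only when $n\gtrsim 2k$. That is exactly the classical barrier this theorem is meant to cross.

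Consequently the strict inequality in the non-trivial case, which your equality characterization relies on, is also unproved.

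The missing idea is the paper's dichotomy on $m=\max_{i\ne j}m_{i,j}$. Here $m_{i,j}$ is the largest $p$ such that some $F\in\mathcal{F}_i$ meets every member of $\mathcal{G}_{i,j}$ (intersections of one member from each $\mathcal{F}_h$, $h\ne i,j$) in dimension at least $p$. The threshold is $\nu=\lceil\frac{k_1+t}{r-1}\rceil+t-1$.

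If $m\ge\nu+1$, such an $F_i$ makes $\{F_i\}$ and the $\mathcal{F}_h$ ($h\ne i,j$) $(r-1)$-cross $m$-intersecting. Lemma \ref{lem:10.1}, via the dimension inequality of Lemma \ref{lem:2.7}, then gives a \emph{joint} loss of $q^{-(r-2)(m-t)(n-k_1)}$ on $\prod_{h\ne i,j}|\mathcal{F}_h|$. This loss dominates the gains ${m\brack t}$ on $\mathcal{F}_i$ and on $\mathcal{F}_j$, together with the factor $q^{(m+1)(k_1-m)}$.

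If $m\le\nu$, one keeps only the low-dimensional intersections $\mathcal{G}_i^{\nu}$. These are nonempty, and every $t$-cover of $\mathcal{G}_j^{\nu}$ is a $t$-cover of $\mathcal{F}_i$, so $\tau_t(\mathcal{F}_i)\le\tau_{t,j}^{\nu}$ (Lemma \ref{lem:3-A}). Lemma \ref{lemmakey} is then your descent, but with branching factor only ${\nu-t+1\brack 1}$, giving $|\mathcal{F}_i|\le f(\tau_{t,j}^{\nu},\tau_{t,i}^{\nu},k_i,\nu,t)$. Multiplying over a cyclic order of the indices telescopes to $\prod_h f(\tau_{t,h}^{\nu},\tau_{t,h}^{\nu},k_h,\nu,t)\le\prod_h{n-t\brack k_h-t}$, with equality only if every $\tau_{t,h}^{\nu}=t$.

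Both cases need exactly $n\ge k_1+\nu+2$, which for integer $n$ is equivalent to the hypothesis $n\ge\frac{r}{r-1}(k_1+t)+1$. Your heuristic comparison of $(r-1)(w-t)$ with $n-k_1-t$ does not reproduce this threshold.
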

We note that the coefficient $\frac{r}{r-1}$ is best possible for all $t\geq1$. To see this, let $k_1=\cdots=k_r=k$ and suppose that $k<n\leq\frac{rk-t}{r-1}$. For arbitrary $F_1,F_2,\ldots,F_r\in{V\brack k}$, we have
\begin{equation*}
	\dim(F_1\cap F_2\cap\cdots\cap F_r)\geq n-\sum_{i=1}^r\bigl(n-\dim(F_i)\bigr)=rk-(r-1)n\geq t.
\end{equation*}
Hence, $\mathcal{F}_1=\mathcal{F}_2=\cdots=\mathcal{F}_r={V\brack k}$ 
are $r$-cross $t$-intersecting.
\begin{remark}
During the final preparation of this paper, we became aware of an independent work of Shimizu and Tokushige \cite{Shimizu-Tokushige}. They prove Conjecture \ref{conj:1} when $k_1=\cdots=k_r$ and $k_1$ is sufficiently large depending on $q$ and $r$. One of the main tools in their proof is a junta theorem concerning intersecting linear maps obtained by Ellis, Kindler, and Lifshitz \cite{Ellis-Kindler-Lifshitz}. Their method also yields results for $r$-cross union families. 
\end{remark}
Our approach also applies to $r$-wise $t$-intersecting families and non-trivial $r$-cross $t$-intersecting families. A family $\mathcal{F}\subseteq{V\brack k}$ is \emph{$r$-wise $t$-intersecting} if $\dim(F_1\cap\cdots\cap F_r)\geq t$ for every choice of $F_1,\ldots,F_r\in\mathcal{F}$. This also serves as a natural generalization of the classical notion of $t$-intersecting. In 2010, Chowdhury and Patk\'{o}s \cite{Chowdhury} determined the maximum-sized $r$-wise $1$-intersecting families for $n\geq\frac{r}{r-1}\cdot k$, which is best possible. Our next main result settles the problem for general $t\geq2$ and all but at most $t+1$ values of $n$.
\begin{theorem} \label{theorem:3}
	Let $r \geq 3$, $k \geq t$ and $n\geq\frac{rk-t}{r-1}+t+1$. If $\mathcal{F} \subseteq { V \brack k}$ is a  $r$-wise $t$-intersecting family, then
	$$
	|\mathcal{F}| \leqslant { n-t \brack k-t}.
	$$
Equality holds if and only if $\mathcal{F}=\{ F \in {V \brack k} : T \subseteq F\}$ for some $T \in {V \brack t}$.
\end{theorem}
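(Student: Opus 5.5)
The plan is to run the $t$-cover argument directly on the single family $\mathcal{F}$. Theorem~\ref{theorem:1} cannot simply be applied with $\mathcal{F}_1=\cdots=\mathcal{F}_r=\mathcal{F}$. That theorem needs $n\geq \frac{rk+rt}{r-1}+1$, whereas here we only have $n\geq\frac{rk+rt-2t}{r-1}+1$. So the saving of $\frac{2t}{r-1}$ must come from the fact that all $r$ families coincide.

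\textbf{Reduction to a non-trivial family.} We may assume $\mathcal{F}$ is maximal. If some $T\in{V\brack t}$ lies in every member, then $|\mathcal{F}|\leq{n-t\brack k-t}$, with equality exactly for the star of $T$. So assume $\mathcal{F}$ is non-trivial. The goal is then the strict inequality $|\mathcal{F}|<{n-t\brack k-t}$.

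\textbf{A small subspace meeting every member.} Choose $s\le r-1$ and $F_1,\dots,F_s\in\mathcal{F}$ so that $W=F_1\cap\cdots\cap F_s$ has minimum dimension. Two facts follow.
\begin{itemize}
\item By the $r$-wise property, for every $F\in\mathcal{F}$ we have $\dim(W\cap F)\geq t$. Moreover, $W\cap F$ is still an intersection of at most $r-1$ members whenever $s\le r-2$, so minimality can be used.
\item Since $\mathcal{F}$ is non-trivial, no single $t$-subspace of $W$ lies in all members.
\end{itemize}
Minimality of $W$, applied to one additional member, gives a dimension bound. Using $\dim(A\cap B)\ge \dim A+\dim B-n$ repeatedly, I expect $m:=\dim W$ to satisfy $m\le (r-1)k-(r-2)n+(\text{small correction})$, or, when that bound is vacuous, $m\le k$.

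\textbf{Counting.} We have
\[
|\mathcal{F}|\le \sum_{T\in{W\brack t}}\bigl|\{F\in\mathcal{F}: T\subseteq F\}\bigr|.
\]
Because no $T$ lies in every member, for each $T$ there is some $G_T\in\mathcal{F}$ with $T\not\subseteq G_T$. Each $F\supseteq T$ must meet $G_T\cap W$ (or an intersection involving $G_T$) in at least $t$ dimensions. This forces $F$ to contain a $(t+1)$-subspace spanned by $T$ and a vector of a bounded subspace. That gives the per-$T$ bound
\[
\bigl|\{F\in\mathcal{F}: T\subseteq F\}\bigr|\le {k\brack 1}{n-t-1\brack k-t-1},
\]
or a sharper variant that uses the small $W$. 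Multiplying by ${m\brack t}$ and comparing with ${n-t\brack k-t}$ through the standard estimate ${n-t-1\brack k-t-1}/{n-t\brack k-t}\approx q^{-(n-k)}$ reduces the theorem to an inequality of the shape
\[
{m\brack t}{k\brack 1}\,q^{-(n-k)}<1 .
\]
This inequality should hold exactly when $n-k$ exceeds roughly $\frac{k+rt-2t}{r-1}$, which is where the hypothesis enters.

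\textbf{Main obstacle.} The hard step is getting $m$ small enough, about $k-(r-1)(n-k)+O(t)$, so that the product above is genuinely below $1$ in the tight range $n\approx\frac{rk}{r-1}$. Several cases are needed: whether the minimal intersection uses fewer than $r-1$ members, and the sub-case where $m$ stays large but the $t$-covers inside $W$ are highly structured. For that sub-case I would first try an iterative refinement. Replace $W$ by $W\cap F$ for an $F$ with $\dim(W\cap F)<m$, so each step spends one of the $r-1$ available members and cuts the dimension by at least $n-k$. Near-equality cases in the counting would be checked separately to confirm that only the star attains ${n-t\brack k-t}$.
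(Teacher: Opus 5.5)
Your reduction to the non-trivial case is fine, but there is a genuine gap in the counting. The inequality you reduce to is false throughout the range where the theorem has content, and your mechanism for making $m$ small runs in the wrong direction.

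By Lemma~\ref{lem:8.1}(iii), ${n-t-1\brack k-t-1}>q^{k-n-1}{n-t\brack k-t}$, and ${k\brack 1}\ge q^{k-1}$. So your per-$T$ bound satisfies ${k\brack 1}{n-t-1\brack k-t-1}>q^{2k-n-2}{n-t\brack k-t}\ge{n-t\brack k-t}$ whenever $n\le 2k-2$. For $r\ge 3$ the smallest admissible $n$ is below $\frac{3k+t}{2}+2$, hence below $2k-2$ once $k\ge t+8$. So a single $T$ already overshoots the target before you multiply by ${m\brack t}$, and ${m\brack t}{k\brack 1}q^{-(n-k)}<1$ needs $n\gtrsim 2k$, not $n-k\gtrsim\frac{k}{r-1}$.

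The ``sharper variant'' does not rescue this. For non-trivial $\mathcal{F}$, the minimum defining $W$ is attained only with $s=r-1$. Indeed, if $s\le r-2$, minimality forces $W\cap F=W$, i.e.\ $W\subseteq F$ for all $F$, making $\mathcal{F}$ trivial. So $W\cap G_T\cap F$ involves $r+1$ members and carries no guarantee. The extra vector must therefore come from some intersection of at most $r-1$ members not containing $T$, never from a proper subspace of $W$. Any such intersection, like $W$ itself, has dimension at least $(r-1)k-(r-2)n\approx\frac{k}{r-1}$ near the threshold. The inequality $\dim(A\cap B)\ge\dim A+\dim B-n$ gives only \emph{lower} bounds on $m$, and intersecting with one more member lowers the dimension by \emph{at most} $n-k$, not at least. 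So your target $m\approx k-(r-1)(n-k)+O(t)$, which is bounded independently of $k$ near the threshold, is unattainable for large $k$. A one-step extension thus loses roughly $q^{(t+1)k/(r-1)}$ while gaining a single factor $q^{-(n-k)}\approx q^{-k/(r-1)}$.

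Two ingredients are missing. First, an \emph{iterated} extension that gains $q^{-(n-k)}$ for every additional dimension forced into $F$, with each step paid for by covers of small dimension. Second, a separate argument for the case where all relevant intersections are large. The paper takes $p=\lceil\frac{k-t}{r-1}\rceil+t$, so that $n\ge k+p+1$, and lets $\mathcal{G}'$ consist of the intersections of $r-1$ members of dimension at most $p-1$.

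Suppose every $F\in\mathcal{F}$ has $F\cap G\in\mathcal{G}'$ for some intersection $G$ of $r-2$ members. Then $\mathcal{G}'$ is a family of $t$-covers of dimension at most $p-1$ with $\tau_t(\mathcal{F})\le\tau_t(\mathcal{G}')=:s$. Proposition~\ref{proposition:2.4}(i), which is the iteration in Lemma~\ref{lemmakey}, together with Lemma~\ref{lem:8.4}, gives $|\mathcal{F}|\le f(s,s,k,p-1,t)\le{n-t\brack k-t}$. Equality holds only if $s=t$, i.e.\ $\mathcal{F}$ is trivial.

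Otherwise some $F_0$ meets every intersection of $r-2$ members in dimension at least $m_0\ge p$. Lemma~\ref{lem:2.7} and pigeonhole then give an $m_0$-cover of $\mathcal{F}$ of dimension at most $\frac{k+(r-3)m_0}{r-2}$. Hence $|\mathcal{F}|\le{\lfloor\frac{k+(r-3)m_0}{r-2}\rfloor\brack m_0}{n-m_0\brack k-m_0}<{n-t\brack k-t}$. This works because large intersections force $m_0$ dimensions, not just $t$, of a fixed small space into every member. A union bound over ${W\brack t}$ discards exactly that information.
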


Our next main result is a Hilton--Milner (see \cite{HM}) type theorem for vector spaces. We focus on the problem for $r$-cross $t$-intersecting families, and refer the reader to \cite{AB,Cao-Lv-Wang-Zhou,D'haeseleer,Ihringer-2019} for works on $t$-intersecting families. Let $\mathcal{F}_{1} \subseteq { V \brack k_{1}}, \mathcal{F}_{2} \subseteq { V \brack k_{2}},\dots,  \mathcal{F}_{r} \subseteq { V \brack k_{r}}$ be $r$-cross $t$-intersecting. The $r$-tuple $(\mathcal{F}_1,\ldots,\mathcal{F}_r)$ is \emph{non-trivial} if no $t$-subspace is contained in all members of $\cup_i\mathcal{F}_i$. When there is no likelihood of confusion, we say that $\mathcal{F}_1,\ldots,\mathcal{F}_r$ are non-trivial. Our next main result determines for all $r\geq3$ the structure of non-trivial $r$-cross $t$-intersecting families maximizing the product of their sizes. 
\begin{theorem}\label{theorem:4}
	Let  $r \geq 3$, $k_{1}\geq k_{2}\geq\dots \geq k_{r} \geq t+1$ and $n \geq\frac{r}{r-1}\cdot(k_{1}+t)+4$. 
	If $\mathcal{F}_{1} \subseteq { V \brack k_{1}}, \mathcal{F}_{2} \subseteq { V \brack k_{2}},\dots,  \mathcal{F}_{r} \subseteq { V \brack k_{r}}$ are non-trivial $r$-cross $t$-intersecting families, then
	$$
	\prod_{i=1}^{r}\left|\mathcal{F}_{i}\right| \leq\left(q^{k_{r}-t}{t+1 \brack 1}{n-t-1 \brack k_{r}-t}+{n-t-1 \brack k_{r}-t-1}\right) \prod_{i=1}^{r-1}{n-t-1 \brack k_{i}-t-1}.
	$$
	Equality holds if and only if there exist $T \in {V \brack t+1}$ and $a \in[r]$ with $k_{a}=k_{r}$ such that $\mathcal{F}_{a}=\left\{F \in{V \brack k_{a}}: \dim(T \cap F) \geq t\right\}$ and $\mathcal{F}_{i}=\left\{F \in {V \brack k_{i}}: T \subseteq F\right\}\;(i \in[r] \setminus\{a\})$.
\end{theorem}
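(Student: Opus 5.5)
We use the $t$-cover method, working with the $t$-covering numbers of the families. For a family $\mathcal{F}$, call a subspace $T$ a \emph{$t$-cover} of $\mathcal{F}$ if $\dim(T\cap F)\geq t$ for all $F\in\mathcal{F}$. Let $\tau_t(\mathcal{F})$ be the minimum dimension of a $t$-cover. Given non-trivial $r$-cross $t$-intersecting $\mathcal{F}_1,\ldots,\mathcal{F}_r$, we may assume each $\mathcal{F}_i$ is nonempty. We may also assume each family is maximal, i.e.\ no $k_i$-subspace can be added to $\mathcal{F}_i$ without destroying the cross $t$-intersection property; this only increases the product. The key observation is that for any $j$, any choice of $F_i\in\mathcal{F}_i$ ($i\neq j$) gives a subspace $F_{[r]\setminus\{j\}}:=\bigcap_{i\neq j}F_i$ that is a $t$-cover of $\mathcal{F}_j$. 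Its dimension is at least $\sum_{i\neq j}k_i-(r-2)n$, and it can be chosen small. Thus the condition $n\geq\frac{r}{r-1}(k_1+t)+O(1)$ is exactly what allows $(r-1)$-fold intersections to have dimension at most roughly $k_1-\frac{n}{r}\cdot$const. We then split into two cases according to whether every $\mathcal{F}_i$ has $\tau_t(\mathcal{F}_i)=t$.

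\emph{Case 1: $\tau_t(\mathcal{F}_j)\geq t+1$ for some $j$, and in fact for several families.} We bound $|\mathcal{F}_j|$ by the standard cover counting. For each $t$-cover $S$ of $\mathcal{F}_j$, there is some $F\in\mathcal{F}_j$ with $\dim(F\cap S)=t$ exactly. Iterating this (the branching argument) yields
$$|\mathcal{F}_j|\leq {s\brack t}\Bigl(\text{branching factor}\Bigr)^{\tau-t}{n-\tau\brack k_j-\tau},$$
where $s$ is the dimension of a minimal cover coming from an $(r-1)$-fold intersection. Combining this with the trivial bounds for the other families, we aim to show the product is strictly smaller than the right-hand side of Theorem \ref{theorem:4} whenever two or more families have covering number $\geq t+1$, or when some family has covering number $\geq t+2$. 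This is a computation with Gaussian binomial estimates such as ${n-1\brack k-1}/{n\brack k}\approx q^{k-n}$. The additive slack $+4$ in $n$ is exactly what makes these estimates beat the extremal value.

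\emph{Case 2: the structured situation.} Suppose all but one family, say all $\mathcal{F}_i$ with $i\neq a$, contain a common $t$-subspace. More precisely, there is $T_0\in{V\brack t}$ with $T_0\subseteq F$ for all $F\in\bigcup_{i\neq a}\mathcal{F}_i$, while by non-triviality some $G\in\mathcal{F}_a$ misses $T_0$, i.e.\ $\dim(G\cap T_0)<t$. Cross-intersection then forces every $(r-1)$-tuple from the other families to have intersection of dimension $\geq t$ with $G$. Maximality and a Hilton--Milner style analysis then reduce the problem to bounding the product $|\mathcal{F}_a|\cdot\prod_{i\neq a}|\mathcal{F}_i|$ in terms of the subspace $T=T_0+(\text{a }t\text{-piece of }G)$. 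The optimum is when all $\mathcal{F}_i$, $i\neq a$, contain a $(t+1)$-space $T$, and $\mathcal{F}_a=\{F:\dim(F\cap T)\geq t\}$. The latter has size
$$q^{k_a-t}{t+1\brack1}{n-t-1\brack k_a-t}+{n-t-1\brack k_a-t-1}.$$
This size is increasing in $k_a$ relative to the loss factor ${n-t-1\brack k_a-t-1}$, which forces $k_a=k_r$. We compare the alternatives, in which the families $\mathcal{F}_i$ ($i\neq a$) do not all contain a common $(t+1)$-space, by counting those members avoiding a fixed $(t+1)$-space and those meeting $G$ suitably; we show each such deviation costs more than it gains.

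The main obstacle is Case 1 near the threshold $n\approx\frac{r}{r-1}(k_1+t)$. There, $(r-1)$-fold intersections can still be fairly large (dimension up to about $k_1-(n-k_1)/(r-1)\cdot$const), so the naive factor ${s\brack t}$ is huge. I would first try to choose the $F_i$ ($i\neq j$) greedily, so that each new member cuts the current intersection as much as possible; this uses maximality and non-triviality to guarantee a member not containing the current intersection. I would then use Theorem \ref{theorem:1} on the restricted families (those containing a fixed $t$-space) to control the remaining factors. The equality characterization then follows by tracing where the inequalities in Case 2 are tight.
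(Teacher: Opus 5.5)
\textbf{Genuine gap.} Your outline finds the right extremal configuration, but the estimates that would prove the theorem are missing. The one concrete mechanism you propose cannot reach the bound.

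Take $k_1=\cdots=k_r=k$, and suppose two families have $\tau_t=t+1$ while the rest are stars. With branching depth $t+1$, your bound for each of the two is at least ${t+1\brack 1}{n-t-1\brack k-t-1}$, because $s\ge t+1$ forces ${s\brack t}\ge{t+1\brack 1}$. The trivial bounds give ${n-t\brack k-t}$ for the others. Since $g(k,t+1)<{t+1\brack 1}{n-t\brack k-t}$, the product of your bounds exceeds the right-hand side by a factor larger than ${t+1\brack 1}q^{(r-3)(n-k)}>1$.

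Your proposed fixes do not help. Theorem~\ref{theorem:1} on restricted families only returns star bounds ${n-t\brack k_i-t}$. Your greedy choice gives no upper bound on $s$, since the families may concentrate so that every $(r-1)$-fold intersection is large.

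Two things are needed: a saving for $r-1$ families simultaneously, and cancellation (not multiplication) of the prefactors ${s\brack t}$. The paper gets both.
\begin{itemize}
\item It splits on $m=\max_{i\ne j}m_{i,j}$. If some $F\in\mathcal{F}_i$ meets every member of $\mathcal{G}_{i,j}$ in dimension $\ge\nu'+1$, Lemma~\ref{lem:2.7} turns this concentration into a joint bound on the other families (Lemmas~\ref{lem:10.1}, \ref{lemma:joint-product}, \ref{lem:10.10}).
\item Otherwise, minimal $t$-covers of the truncated family $\mathcal{G}_\ell^{\nu'}$ are $t$-covers of every $\mathcal{F}_h$ with $h\ne\ell$ (Lemma~\ref{lem:3-A}). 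Meanwhile $\mathcal{G}_h^{\nu'}$ supplies covers of $\mathcal{F}_h$ of dimension at most $\nu'$, so the branching factor is only ${\nu'-t+1\brack 1}$. Balancing the two regimes at $\nu'\approx t+\frac{k_1+t}{r-1}$ is what produces $n\ge\frac{r}{r-1}(k_1+t)+O(1)$.
\item Each $\mathcal{F}_h$ is then bounded using a cover from a \emph{different} family's $\mathcal{G}_\ell^{\nu'}$ in the union bound, and its own $\tau_{t,h}^{\nu'}$ as branching depth. Multiplied along a cycle, the loss ${\tau_{t,\ell}^{\nu'}\brack t}$ pairs with the gain ${n-\tau_{t,\ell}^{\nu'}\brack k_\ell-\tau_{t,\ell}^{\nu'}}$ in the bound for $\mathcal{F}_\ell$. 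What remains are diagonal terms handled by Lemmas~\ref{lem:8.4} and~\ref{lem:8.6} (cf.\ Lemmas~\ref{lemma:small-m} and~\ref{lem:10.4}).
\end{itemize}

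\textbf{Case 2 is also not yet an argument.} First, your split on $\tau_t(\mathcal{F}_i)$ is not exhaustive. ``At most one family has $\tau_t=t+1$ and the others have $\tau_t=t$'' does not give a common centre $T_0$, and stars with different centres are not covered by either case. The paper splits instead on the truncated numbers $\tau_{t,h}^{\nu'}$. It uses maximality to show that at most one of them equals $t$, and that its cover lies inside minimal covers of all the others (Lemmas~\ref{lem:10.5}, \ref{lem:10.11}).

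More seriously, ``each deviation costs more than it gains'' is exactly what must be proved. From $T_0$ and some $G\in\mathcal{F}_a$ with $T_0\not\subseteq G$, you only get $\dim(F\cap(T_0+G))\ge t+1$ for $F\in\mathcal{F}_i$, $i\ne a$. That gives $|\mathcal{F}_i|\le{k_a\brack 1}{n-t-1\brack k_i-t-1}$, a loss of ${k_a\brack 1}$ per family. Meanwhile $\mathcal{F}_a$ can exceed $g(k_a,t+1)$ when the other families shrink, so the trade-off is real.

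The paper removes the loss using property~$\mathcal{P}$ (its failure is handled by Lemma~\ref{lem:10.17}) and the decomposition $\mathcal{F}^{<},\mathcal{F}^{=},\mathcal{F}^{>}$ relative to a minimal cover. By Lemma~\ref{lem:10.2}, the first two parts are each less than a $q^{-(r+1)}$-fraction of the family. The third part is $(t+1)$-covered and contains a fixed $t$-subspace, which gives the $\bar f$ bound. The paper then trades $|\mathcal{F}_z|^{r-1}\le\prod_{h\ne z}g(k_z,\tau_{t,h}^{\nu'})$ against $\prod_{h\ne z}\bar f(\tau_{t,h}^{\nu'},\tau_{t,h}^{\nu'},k_h,\nu')$ via the monotonicity of $g(k_j,s)\bar f(s,s,k_i,\ell)^{r-1}$ in $s$ (Lemma~\ref{lem:8.9}). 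The equality case is read off from this chain (Lemma~\ref{lem:10.6}).

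Incidentally, what forces $k_a=k_r$ is that $g(k,t+1)/{n-t-1\brack k-t-1}$ is \emph{decreasing} in $k$ (Lemma~\ref{lem:8.10}), not increasing as your sketch suggests.
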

We note that the problem for $r=2$ is settled in \cite{Cao-Lu-Lv-Wang}, and our result substantially improves the bound $n\geq k_1+k_{r}+2$ given in \cite{Wen-Lv-vector}. This can also be regarded as a stability result for Theorem \ref{theorem:1}. More precisely, if $\prod_{i=1}^{r}|\mathcal{F}_{i}|$ is greater than the upper bound given in Theorem \ref{theorem:4}, then the families are trivial. We also note that we may assume $k_r\geq t+1$. Indeed, suppose that these families are non-trivial and $k_r=t$. Then every member of $\mathcal{F}_r$ is contained in each member of $\cup_{i\neq r}\mathcal{F}_i$. Set $M=\sum_{X\in\mathcal{F}_r}X$. Then $M\subseteq F$ whenever $F\in\cup_{i\neq r}\mathcal{F}_i$, and then $\mathcal{F}_r\subseteq{M\brack t}$ and $\mathcal{F}_i\subseteq\left\{F\in{V\brack k_i}:M\subseteq F\right\}$ for each $i\in[r-1]$. 
\subsection{Our techniques}
Our approach is purely combinatorial and is based on the notion of a \emph{$t$-cover}. The technique is useful for intersection problems on a variety of objects (see, e.g., \cite{Cao-Lv-Wang-Zhou,Cao-Lu-Lv-Wang,Wen-Lv-vector,Wen-Lv-2026}). The key step in applying the method is to find a suitable collection of $t$-covers (see Lemma \ref{lemmakey} and Proposition \ref{proposition:2.4}). Such subspaces can be used both to bound the relevant families and to characterize their structure. In order to prove our main results, the following refinements are essential. 
\begin{itemize}
\item\emph{Joint product estimates from partial intersections}.\; We consider two collections of families $\{\mathcal{G}_i:i\in[r]\}$ and $\{\mathcal{G}_{i,j}:i\neq j\in[r]\}$ (see (\ref{equfamgi}) and (\ref{equfamgij}) for details). These families accurately capture the property \emph{$r$-cross} rather than merely describing that the $r$ families are \emph{pairwise} cross $t$-intersecting. Then we measure how large an intersection a member of $\mathcal{F}_i$ can guarantee with every member of $\mathcal{G}_{i,j}$. If this parameter $m_{i,j}$ (see (\ref{equmij})) is large, the same choice provides a $t$-cover for $\mathcal{F}_j$ and, at the same time, allows us to estimate jointly the product of the sizes of the remaining $r-2$ families. Applying the corresponding estimate in the reverse direction controls the remaining family.

\item\emph{Truncated families of $t$-covers.}\;
To settle the complementary case in which all the $m_{i,j}$'s are less than a suitably chosen threshold $\nu$, we retain only the low-dimensional members of each $\mathcal{G}_i$ (see (\ref{equgip})). Then every $t$-cover of the truncated part of $\mathcal{G}_j$ is also a $t$-cover of $\mathcal{F}_i$ whenever $i\neq j$. On the other hand, the truncated part of $\mathcal{G}_i$ is itself a collection of $t$-covers of $\mathcal{F}_i$. Consequently, we can estimate the families $\mathcal{F}_1,\ldots,\mathcal{F}_r$ using the $t$-covering numbers of truncated families of $\mathcal{G}_i$'s and $\mathcal{G}_{i,j}$'s, and they can be matched cyclically when the estimates for the $r$ families are multiplied. This reduces the product estimate to the corresponding `diagonal' $t$-cover estimates. The threshold $\nu$ mentioned above is approximately $t+\frac{k_1+t}{r-1}$, which can be much smaller than the quantity $k_r$ that appears in the bound $k_1+k_r+2$ given in \cite{Wen-Lv-vector}.

\item \emph{A decomposition via $t$-covers.}
We decompose a family $\mathcal{F}$ with respect to two subspaces $T_1$ and $T_2$, according to whether $F\cap T_1$ fails to contain, equals, or properly contains $T_1\cap T_2$ (see (\ref{equdecomp})). In our applications, \(T_1\) is a \(t\)-cover of a 
subfamily of \(\mathcal{F}\). The simple yet useful decomposition provides both quantitative estimates and structural information. More precisely, the third part can be bounded using our strengthened $t$-cover estimate (see Proposition \ref{proposition:2.4}), and the other two parts allow us to control the remaining families.
\end{itemize}
\subsection{Organization}
The rest of this paper is organized as follows. In Section \ref{section 2}, we collect several inequalities and prove a key lemma for bounding the sizes of families using $t$-covers (Lemma \ref{lemmakey}), followed by a strengthened estimate in Proposition \ref{proposition:2.4}. In Section \ref{section 3}, we establish product estimates for the two complementary cases in which the relevant partial intersections have large or small dimension (Lemmas \ref{lemma:large-m} and \ref{lemma:small-m}, respectively), and then prove Theorems \ref{theorem:1} and \ref{theorem:3}. In Section \ref{section 4}, we use the techniques developed in the preceding sections together with the decomposition via $t$-covers described above to prove Theorem \ref{theorem:4}.

\section{Preliminaries and $t$-cover estimates} \label{section 2}
For a family $\mathcal{F}$ of subspaces of $V$, 
a subspace $T$ of $V$ is called a \emph{$t$-cover} of $\mathcal{F}$ if 
$\dim(T \cap F) \geq t$ for all $F \in \mathcal{F}$. If such a subspace exists, then the \emph{$t$-covering number} $\tau_{t}(\mathcal{F})$ of $\mathcal{F}$ is the minimum 
dimension of such a $t$-cover. It is clear that a $t$-intersecting family $\mathcal{F}$ 
is trivial if and only if $\tau_{t}(\mathcal{F})=t$. For simplicity, let 
\begin{align*}
	\mathcal{T}_{t}(\mathcal{F}) = \left \{ T \in {V \brack \tau_{t}(\mathcal{F})}: T \text{ is a $t$-cover of } \mathcal{F} \right \}.
\end{align*}
For a family $\mathcal{F}$ of subspaces of $V$ and a subspace $S$ of $V$, write $$\mathcal{F}[S] = \{ F \in \mathcal{F} : S \subseteq F\}.$$

Let $W$ be an $(e+\ell)$-dimensional vector space over $\mathbb{F}_q$(where $\ell,e \ge 1$), and let $L$ be a fixed $\ell$-subspace of $W$. We say that an $s$-subspace $U$ is of \emph{type} $(s,h)$ if $\dim(U\cap L)=h$. Define $\mathcal{M}(s,h;e+\ell,e)$ as the set of all subspaces of $W$ of type $(s,h)$.

\begin{Lemma}[\cite{Wang-Guo-Li}]\label{lem:2-2}
$\mathcal{M}(s,h;e+\ell,e)$ is non-empty if and only if $0\leq h\leq \ell$ and $0\leq s-h\leq e.$ Moreover, if $\mathcal{M}(s,h;e+\ell,e)$ is non-empty, then
$$
|\mathcal{M}(s,h;e+\ell,e)|=q^{(s-h)(\ell-h)}{e\brack s-h}{\ell\brack h}.	
$$
\end{Lemma}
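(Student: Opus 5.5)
We count the subspaces $U$ of type $(s,h)$ by first choosing the intersection $H=U\cap L$, and then counting the $s$-subspaces $U$ with $U\cap L=H$ exactly. The first choice is an $h$-subspace of the $\ell$-space $L$, so it can be made in ${\ell\brack h}$ ways.

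\textbf{Non-emptiness.} If $U$ has type $(s,h)$, then $h\le \ell$ trivially. Also $U/H$ embeds into $W/L$, so $s-h\le e$. Conversely, suppose $0\le h\le\ell$ and $0\le s-h\le e$. Fix a complement $L'$ of $L$ in $W$, so $\dim L'=e$. Take $H\subseteq L$ of dimension $h$ and $K\subseteq L'$ of dimension $s-h$. Then $U=H\oplus K$ satisfies $U\cap L=H$, so $U$ has type $(s,h)$.

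\textbf{The count for fixed $H$.} Pass to the quotient $\overline W=W/H$, of dimension $e+\ell-h$, with the image $\overline L=L/H$ of dimension $\ell-h$. Subspaces $U\supseteq H$ with $U\cap L=H$ correspond bijectively to $(s-h)$-subspaces $\overline U$ of $\overline W$ with $\overline U\cap\overline L=0$. So we need the number of $m$-subspaces, with $m=s-h$, meeting a fixed $d$-subspace trivially (here $d=\ell-h$) inside an $N$-space (here $N=e+d$).

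I would count these by ordered bases. An ordered basis $(v_1,\dots,v_m)$ of such a subspace is exactly a sequence of vectors in which each $v_j$ lies outside $\overline L+\langle v_1,\dots,v_{j-1}\rangle$. That span has dimension $d+j-1$, so the number of sequences is
\[
\prod_{j=1}^{m}\bigl(q^{N}-q^{d+j-1}\bigr).
\]
Dividing by $|GL_m(\mathbb F_q)|=\prod_{j=1}^m (q^m-q^{j-1})$ gives
\[
\prod_{j=1}^{m}\frac{q^{d+j-1}(q^{N-d-j+1}-1)}{q^{j-1}(q^{m-j+1}-1)}=q^{dm}{N-d\brack m}=q^{(s-h)(\ell-h)}{e\brack s-h}.
\]

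\textbf{Conclusion.} Multiplying the count for fixed $H$ by the ${\ell\brack h}$ choices of $H$ yields the stated formula
\[
|\mathcal{M}(s,h;e+\ell,e)|=q^{(s-h)(\ell-h)}{e\brack s-h}{\ell\brack h}.
\]

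The only real care point is the bookkeeping of the powers of $q$ in the ordered-basis count, together with justifying the quotient correspondence. That correspondence holds because, for $U\supseteq H$, we have $(U\cap L)/H=\overline U\cap\overline L$.
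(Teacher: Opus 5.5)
Your proof is correct and complete. The paper does not prove this lemma: it quotes it from Wang, Guo and Li and uses it only to get (2.1) and the count $g(k,\ell)$. So there is no in-paper argument to compare against, and your proof serves as a self-contained justification.

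\begin{itemize}
\item Your route partitions by $H=U\cap L$, passes to $W/H$, and counts ordered bases of subspaces meeting $L/H$ trivially.
\item This makes the factors visible as independent choices: ${\ell\brack h}$ for $H$, and $q^{(s-h)(\ell-h)}{e\brack s-h}$ for an $(s-h)$-subspace of $W/H$ avoiding $L/H$.
\item The attenuated-space literature typically gets the same count from matrix representations and the transitive action of the singular linear group. Your argument avoids that machinery.
\end{itemize}

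The details check out. The quotient correspondence is valid because $(U\cap L)/H=\overline U\cap\overline L$ when $H\subseteq U\cap L$. Each factor in the ordered-basis ratio contributes $q^{d}$, and what remains is exactly the paper's defining product for ${e\brack m}$. The degenerate case $s=h$ is the empty product. In the necessity direction you leave implicit only the trivial bounds $h\ge 0$ and $s-h\ge 0$, which hold since $H\subseteq U$.
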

By the lemma above, if \(A\le C\le V\), \(\dim A=a\), \(\dim C=c\) and \(a\le b\le c\), then
\begin{align} \label{eq:1}
    \left |  \left \{  B \in { C \brack b} : A \subseteq B\right \}\right | = {c-a \brack b-a}.
\end{align}

The following formulas are frequently used throughout the paper.
\begin{Lemma}\label{lem:8.1}
For $i\leq s$, the following hold.
\begin{itemize}
\item[$\rm (i)$] ${s\brack i}={s-1\brack i-1}+q^i{s-1\brack i}$ and ${s\brack i}=\frac{q^s-1}{q^i-1}\cdot{s-1\brack i-1}$.
\item[$\rm (ii)$] $q^{s-i}<\frac{q^s-1}{q^i-1}<q^{s-i+1}$ and $q^{i-s-1}<\frac{q^i-1}{q^s-1}<q^{i-s}$ for $i<s$.
\item[$\rm (iii)$] $q^{i-s-1}{s\brack i}<{s-1\brack i-1}<q^{i-s}{s\brack i}$ for $i<s$.
\item[$\rm (iv)$] $q^{i(s-i)}\leq{s\brack i} \leq q^{i(s-i)+\min\{i,s-i\}}$, and both inequalities hold strictly for $0<i<s$.
\end{itemize}
\end{Lemma}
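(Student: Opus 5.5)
Parts (i)--(iv) are standard facts about Gaussian binomial coefficients. I would prove each directly from the product formula ${s\brack i}=\prod_{j=0}^{i-1}\frac{q^{s-j}-1}{q^{i-j}-1}$ with elementary inequalities, in the order (i), (ii), (iii), (iv), since each later part uses the earlier ones.

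For (i), the second identity comes from pulling out the $j=0$ factor $\frac{q^s-1}{q^i-1}$; the remaining product is exactly ${s-1\brack i-1}$ after reindexing. For the Pascal-type identity, I would again use the product formula: both ${s-1\brack i-1}$ and $q^i{s-1\brack i}$ are multiples of $\frac{(q^{s-1}-1)\cdots(q^{s-i+1}-1)}{(q^i-1)\cdots(q-1)}$, and the two extra factors are $(q^i-1)$ and $q^i(q^{s-i}-1)$. Their sum is $q^s-1$, which gives ${s\brack i}$. Alternatively, fix a hyperplane $H$ and count $i$-subspaces inside $H$ and not inside $H$.

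For (ii), assume $i<s$ and cross-multiply. The inequality $q^{s-i}(q^i-1)<q^s-1$ is equivalent to $q^{s-i}>1$. The inequality $q^s-1<q^{s-i+1}(q^i-1)=q^{s+1}-q^{s-i+1}$ is equivalent to $q^{s-i+1}<q^{s+1}-q^s+1$. This holds because $q^{s+1}-q^s\ge q^s\ge q^{s-i+1}$ when $i\ge1$ and $q\ge2$. Here I must assume $i\ge1$, since for $i=0$ the quantity $\frac{q^s-1}{q^0-1}$ is undefined. The reciprocal statement follows by inverting.

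For (iii), combine the second identity in (i), ${s-1\brack i-1}=\frac{q^i-1}{q^s-1}{s\brack i}$, with the reciprocal bounds in (ii).

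For (iv), write ${s\brack i}=\prod_{j=0}^{i-1}\frac{q^{s-j}-1}{q^{i-j}-1}$. By (ii), each factor lies strictly between $q^{s-i}$ and $q^{s-i+1}$, and there are $i$ factors. This gives $q^{i(s-i)}<{s\brack i}<q^{i(s-i)+i}$ for $0<i<s$. By the symmetry ${s\brack i}={s\brack s-i}$, which is immediate from the formula, the upper bound can be replaced by $q^{i(s-i)+(s-i)}$; together these give the exponent $\min\{i,s-i\}$. The cases $i=0$ and $i=s$ give equality $1=1$.

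The only real care needed is the boundary cases: $i=0$ in (ii)--(iii), and strictness in (iv). This is bookkeeping rather than a genuine obstacle.
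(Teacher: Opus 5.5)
Your proof is correct. The paper does not prove this lemma at all; it lists these as standard facts about Gaussian binomial coefficients. Your direct verification from the product formula is the routine argument being taken for granted: a common factor for the $q$-Pascal identity, cross-multiplication for (ii), (iii) from the second identity in (i) together with (ii), and factor-by-factor bounds plus the symmetry ${s\brack i}={s\brack s-i}$ for (iv).

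Two small remarks.

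\emph{The hyperplane count.} Your alternative argument for (i), counting $i$-subspaces inside or not inside a fixed hyperplane, actually yields the other form of the identity, ${s\brack i}={s-1\brack i}+q^{s-i}{s-1\brack i-1}$. To get the stated form combinatorially, count $i$-subspaces according to whether they contain a fixed $1$-subspace, or apply the symmetry. Your primary algebraic argument is unaffected.

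\emph{The case $i=0$.} In (iii) this is not mere bookkeeping. With the paper's convention ${s-1\brack -1}=0$, the left inequality $q^{-s-1}{s\brack 0}<{s-1\brack -1}$ is false. So (ii) and (iii) must be read with $1\le i<s$. That is exactly what your derivation assumes, and it is how the paper applies them.
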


The following elementary lemma provides a useful relation among the dimensions of certain intersections of subspaces. 
\begin{Lemma}\label{lem:2.7}
    Let $r \geq 2$, and let $T_{1}, T_{2}, \dots, T_{r}$ be subspaces of $V$. Write $G = T_{1} \cap T_{2} \cap \dots \cap T_{r}$ and $G_{i} = \bigcap_{h \neq i} T_{h}$ for $1\leq i\leq r$. Then for each $i \in [r]$, we have 
    $$
    \sum_{j \neq i} \dim G_{j} \leq \dim T_{i} + (r-2) \cdot \dim G.
    $$
\end{Lemma}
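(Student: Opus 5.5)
The plan is to realize the left-hand side as the dimension of a single subspace of $T_i$, computed by adding the $G_j$ one at a time. The two structural facts doing the work are that every $G_j$ with $j\neq i$ lies inside $T_i$, and that $G_j\cap T_j=G$ for every $j$.

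First I record three elementary observations.

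\begin{itemize}
\item[(a)] For $j\neq i$, the intersection defining $G_j$ includes the factor $T_i$, so $G_j\subseteq T_i$.
\item[(b)] More generally, $G_j\subseteq T_h$ whenever $h\neq j$.
\item[(c)] $G_j\cap T_j=T_1\cap\cdots\cap T_r=G$, and $G\subseteq G_j$ for every $j$.
\end{itemize}

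Next, enumerate the indices in $[r]\setminus\{i\}$ as $j_1,\dots,j_{r-1}$. Set $S_s=G_{j_1}+\cdots+G_{j_s}$. I will prove by induction on $s$ that
$$\dim S_s=\sum_{l=1}^{s}\dim G_{j_l}-(s-1)\dim G.$$
The case $s=1$ is trivial. For the inductive step, use the dimension formula
$$\dim S_{s+1}=\dim S_s+\dim G_{j_{s+1}}-\dim\bigl(S_s\cap G_{j_{s+1}}\bigr).$$
So it suffices to show $S_s\cap G_{j_{s+1}}=G$.
\begin{itemize}
\item By (b), each $G_{j_l}$ with $l\le s$ lies in $T_{j_{s+1}}$, since $j_l\neq j_{s+1}$. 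Hence $S_s\subseteq T_{j_{s+1}}$.
\item Therefore $S_s\cap G_{j_{s+1}}\subseteq T_{j_{s+1}}\cap G_{j_{s+1}}=G$ by (c).
\item Conversely, $G\subseteq G_{j_1}\subseteq S_s$ and $G\subseteq G_{j_{s+1}}$, so $G\subseteq S_s\cap G_{j_{s+1}}$.
\end{itemize}
This gives equality and completes the induction.

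Finally, take $s=r-1$. By (a), $S_{r-1}\subseteq T_i$, so
$$\sum_{j\neq i}\dim G_j-(r-2)\dim G=\dim S_{r-1}\le\dim T_i,$$
which is exactly the claimed inequality.

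The only real obstacle is the inductive step. For arbitrary subspaces, the pairwise-intersection count fails (three lines in a plane show this), so one must use the specific containment $S_s\subseteq T_{j_{s+1}}$ rather than just the pairwise identities $G_j\cap G_l=G$. Observation (b) supplies that containment.
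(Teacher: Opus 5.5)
Your proof is correct and matches the paper's argument. The paper likewise expands $\dim(G_2+\cdots+G_r)$ by repeated use of the dimension formula, shows each correction term $G_j\cap(G_2+\cdots+G_{j-1})$ equals $G$ because it lies in $G_j\cap T_j$, and concludes from $G_2+\cdots+G_r\le T_1$. The only cosmetic difference is that your induction covers $r=2$ uniformly, whereas the paper treats that case separately.
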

\begin{proof}
When $r = 2$, we have $G_1 = T_2$ and $G_2 = T_1$, and the inequality holds trivially. Now we assume that $r \geq 3$. By symmetry, it suffices to prove the case $i = 1$. Applying the dimension formula repeatedly yields 
\begin{align*}
    \dim(G_2 + \dots + G_r)=\sum_{j=2}^{r} \dim G_j-\sum_{j=3}^{r} \dim\bigl(G_j \cap (G_2 + \dots + G_{j-1})\bigr). 
\end{align*}
Let $j \in [3, r]$. It is clear that $G \subseteq G_j \cap (G_2 + \dots + G_{j-1})$. Conversely, 
\begin{align*}
    G_j \cap (G_2 + \dots + G_{j-1}) \subseteq  G_j \cap T_{j} = G.
\end{align*}
Hence $\dim\bigl(G_j \cap (G_2 + \dots + G_{j-1})\bigr)= \dim G$, implying that
\[
\begin{aligned}
\sum_{j=2}^{r} \dim G_j
= \dim(G_2 + \dots + G_r) + (r-2) \cdot \dim G 
\leq \dim T_1 + (r-2) \cdot \dim G,
\end{aligned}
\]
where the last inequality follows because $G_2 + \dots + G_r\leq T_1$.
\end{proof}
\begin{Lemma} \label{lem:2.6}
	Let $n \geq k \geq t$ and $\mathcal{F} \subseteq { V \brack k}$. If $G \in {V \brack \ell}$ is a $t$-cover of $\mathcal{F}$, then $|\mathcal{F}| \leq {\ell \brack t}{n- t \brack k-t}$.
\end{Lemma}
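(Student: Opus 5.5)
The plan is to double count, or equivalently to take a union bound over the $t$-subspaces of $G$. Every $F\in\mathcal{F}$ satisfies $\dim(G\cap F)\ge t$ because $G$ is a $t$-cover. So $G\cap F$ contains at least one $t$-subspace $T$ of $G$, and then $T\subseteq F$. This gives the inclusion
\[
\mathcal{F}\subseteq \bigcup_{T\in{G\brack t}} \mathcal{F}[T].
\]

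Next we bound each piece. For a fixed $T\in{G\brack t}$, we have
\[
|\mathcal{F}[T]|\le \left|\left\{F\in{V\brack k}: T\subseteq F\right\}\right|={n-t\brack k-t},
\]
by (\ref{eq:1}) applied with $A=T$, $C=V$ and $b=k$. The number of $t$-subspaces of $G$ is ${\ell\brack t}$. Summing over $T$ therefore gives
\[
|\mathcal{F}|\le \sum_{T\in{G\brack t}}|\mathcal{F}[T]|\le {\ell\brack t}{n-t\brack k-t}.
\]

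Two degenerate cases need a remark. If $\ell<t$, then no $F$ can meet $G$ in dimension $t$, so $\mathcal{F}=\emptyset$ and the bound is trivial; this also agrees with the convention ${\ell\brack t}=0$. No step here is a genuine obstacle. The only care needed is to check that the union bound is applied to the correct containment, namely that each $F$ is counted at least once.
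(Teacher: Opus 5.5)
Your proof is correct and is the same argument as the paper's. You write $\mathcal{F}$ as the union of the subfamilies $\mathcal{F}[H]$ over $H\in{G\brack t}$, bound each by ${n-t\brack k-t}$ via (\ref{eq:1}), and apply the union bound; the remark on the degenerate case $\ell<t$ is a harmless addition that the paper leaves implicit.
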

\begin{proof}
	Since $G$ is a $t$-cover of $\mathcal{F}$, we have $\dim(F\cap G)\geq t$ for al $F\in\mathcal{F}$. Hence $\mathcal{F}=\bigcup_{H \in {G \brack t}} \mathcal{F}[H]$. 
	By (\ref{eq:1}), $|\mathcal{F}[H]| \le {n-t \brack k-t}$ for each such $H$. So the estimate follows from the union bound.
\end{proof}
Let $n \geq k \geq t$ and $\mathcal{F} \subseteq {V \brack k}$. Each $t$-cover of $\mathcal{F}$ partially characterizes $\mathcal{F}$, as it 
intersects each member in a subspace of dimension at least $t$. This provides the intuition to utilize $t$-covers in establishing upper bounds on the size of $\mathcal{F}$. 
\begin{Lemma}\label{lemmakey}
Let $n \geq k+\ell-t+1$ and $\mathcal{F} \subseteq { V \brack k}$. Let $\mathcal{G}$ be a collection of $t$-covers of $\mathcal{F}$, where each has dimension at most $\ell$. Then for all $W\leq V$ with $\dim(W)\geq t$, it holds that
\begin{equation*}\label{eq:Prop:2-6}
	|\mathcal{F}[W]| \leq {\ell-t+1 \brack 1}^{\tau_{t}(\mathcal{G})-\dim(W)}{n- \tau_{t}(\mathcal{G}) \brack k-\tau_{t}(\mathcal{G})}.
\end{equation*}
\end{Lemma}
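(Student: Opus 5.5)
We argue by downward induction on $\dim(W)$, in the style of the classical covering argument of Frankl. Write $\tau=\tau_t(\mathcal{G})$. Note first that $\mathcal{G}$ has a $t$-cover: since $n\ge k+\ell-t+1$, any $F\in\mathcal{F}$ meets every member of $\mathcal{G}$ in dimension at least $t$, so $F$ is a $t$-cover of $\mathcal{G}$. Thus $\tau\le k$ is defined whenever $\mathcal{F}\neq\emptyset$; if $\mathcal{F}=\emptyset$ the bound is trivial.

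\textbf{Case $\dim(W)\ge\tau$.} Here the exponent $\tau-\dim(W)$ is $\le 0$, and we use the trivial bound
\[
|\mathcal{F}[W]|\le {n-\dim W\brack k-\dim W}.
\]
It then suffices to check that
\[
{n-w\brack k-w}\le {\ell-t+1\brack 1}^{\tau-w}{n-\tau\brack k-\tau}\qquad\text{for } w\ge\tau.
\]
By Lemma~\ref{lem:8.1}(iii), ${n-w\brack k-w}\le q^{-(n-k)(w-\tau)}{n-\tau\brack k-\tau}$. Moreover, ${\ell-t+1\brack 1}^{-1}\ge q^{-(\ell-t+1)}$ by Lemma~\ref{lem:8.1}(ii). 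Since $n-k\ge \ell-t+1$, the inequality follows. The same computation also covers the case $\dim(W)>k$, where $\mathcal{F}[W]=\emptyset$.

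\textbf{Case $\dim(W)<\tau$.} Suppose the claim holds for all subspaces of dimension $\dim(W)+1$. Since $\dim(W)<\tau$, the subspace $W$ is not a $t$-cover of $\mathcal{G}$, so there is $G\in\mathcal{G}$ with $\dim(W\cap G)\le t-1$. Every $F\in\mathcal{F}[W]$ satisfies $\dim(F\cap G)\ge t$, so $F\cap G\not\subseteq W$. Pick $x\in (F\cap G)\setminus W$; then $W+\langle x\rangle\subseteq F$, which gives
\[
\mathcal{F}[W]\subseteq\bigcup_{U}\mathcal{F}[U],
\]
where $U=W+\langle x\rangle$ ranges over the $(\dim W+1)$-subspaces of $W+G$ containing $W$ whose extra vector comes from $G$.

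The crucial count is the number of such $U$. These correspond to $1$-subspaces of $(W+G)/W\cong G/(W\cap G)$, which has dimension at most $\ell-\dim(W\cap G)$. This is not obviously $\le \ell-t+1$ unless $\dim(W\cap G)=t-1$, so this is the main obstacle. To fix it, we count more carefully. Since $\dim(F\cap G)\ge t$ and $\dim(W\cap G)=:s\le t-1$, the subspace $(F\cap G)+ (W\cap G)$ projects in $G/(W\cap G)$ to a subspace of dimension at least $t-s$. Choose a fixed subspace $H\le G$ complementary to $W\cap G$ in $G$, with $\dim H\le \ell-s$, and a fixed subspace $H'\le H$ of codimension $t-s-1$ in $H$, so that $\dim H'\le\ell-t+1$. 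The image of $F\cap G$ in $H\cong G/(W\cap G)$ has dimension $\ge t-s$, so it meets $H'$ nontrivially. Hence we may choose $x$ inside $(F\cap G)+(W\cap G)$ projecting into $H'$, giving at most ${\ell-t+1\brack 1}$ choices of $U$, and these $U$ still satisfy $U\subseteq F$.

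The induction hypothesis then gives
\[
|\mathcal{F}[W]|\le{\ell-t+1\brack 1}\cdot{\ell-t+1\brack 1}^{\tau-\dim W-1}{n-\tau\brack k-\tau},
\]
which is the claimed bound.
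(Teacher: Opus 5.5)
Your proof is correct, but it handles the key counting step differently from the paper. The paper extends by $t-s_i$ dimensions at once. Every $F\in\mathcal{F}[W_i]$ meets $W_i+G_i$ in dimension at least $\dim W_i+t-s_i$, so $\mathcal{F}[W_i]$ is covered by the families $\mathcal{F}[W_{i+1}]$ with $W_i\le W_{i+1}\le W_i+G_i$ of that dimension. There are ${\dim G_i-s_i\brack t-s_i}\le{\ell-t+1\brack 1}^{t-s_i}$ such families. These jumps can overshoot $\tau_t(\mathcal{G})$ by up to $t-1$, so the paper then needs ${\ell-t+1\brack 1}^{z}{n-z\brack k-z}$ to be decreasing for $z\ge\tau_t(\mathcal{G})$. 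That is where $n\ge k+\ell-t+1$ enters, even when $\dim W<\tau_t(\mathcal{G})$.

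You instead step one dimension at a time. You keep each step's cost at ${\ell-t+1\brack 1}$ by restricting the new vector to the fixed subspace $H'$ of dimension $\dim G-t+1$, which the projection of $F\cap G$ onto $H$ must meet for every $F\in\mathcal{F}[W]$. Your induction therefore lands exactly on dimension $\tau_t(\mathcal{G})$, where the trivial bound ${n-\tau_t(\mathcal{G})\brack k-\tau_t(\mathcal{G})}$ suffices. This buys you two things:
\begin{itemize}
\item the hypothesis on $n$ is used only to dispose of $W$ with $\dim W>\tau_t(\mathcal{G})$, so for $t\le\dim W\le\tau_t(\mathcal{G})$ your argument gives the bound with no condition on $n$;
\item you bypass the inequality ${a\brack b}\le{a-b+1\brack 1}^{b}$, which the paper uses without stating it.
\end{itemize}
The paper's jump is more direct, since there is no auxiliary complement $H$ or subspace $H'$ to choose. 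Its per-jump count is also slightly sharper before being relaxed, though both routes end at the same bound.

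Two small points to tidy. First, in your opening paragraph, $F$ is a $t$-cover of $\mathcal{G}$ simply because each member of $\mathcal{G}$ is a $t$-cover of $\mathcal{F}$; the bound on $n$ plays no role there. Second, when counting the $U$'s, say explicitly that you write $x=w+h$ with $w\in W\cap G$ and $0\ne h\in H'$. Then $U=W+\langle h\rangle$, with $h\notin W$ because $H\cap W=H\cap(W\cap G)=0$. So $U$ is determined by the $1$-subspace $\langle h\rangle$ of $H'$, giving at most ${\dim H'\brack 1}\le{\ell-t+1\brack 1}$ choices.
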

\begin{proof}
We adapt and refine the argument in \cite[Proposition 2.3]{Wen-Lv-vector}. Fix a subspace $W$ with $\dim(W)\geq t$. First, if $\dim(W)\geq\tau_{t}(\mathcal{G})$, then certainly
	\begin{align*}
		|\mathcal{F}[W]| \leq {n- \dim(W) \brack k-\dim(W)} 
		&\leq q^{-(n-k)(\dim(W)-\tau_{t}(\mathcal{G}))}{n- \tau_{t}(\mathcal{G}) \brack k-\tau_{t}(\mathcal{G})},
	\end{align*}
	and we have done as ${\ell-t+1 \brack 1}\leq q^{n-k}$. It remains to suppose  $\dim(W)<\tau_t(\mathcal{G})$. Now $W$ is not a $t$-cover of $\mathcal{G}$. We iteratively pick a sequence $W_0\subsetneqq W_1\subsetneqq\cdots\subsetneqq W_a$ of subspaces as follows. Put $W_0=W$. As long as $\dim(W_i)<\tau_t(\mathcal{G})$, the subspace $W_i$ is not a $t$-cover of $\mathcal{G}$. Hence, there exists $G_i\in\mathcal{G}$ such that $s_i:=\dim(W_i\cap G_i)<t$. Since $G_i$ is a $t$-cover of $\mathcal{F}$, every member of $\mathcal{F}[W_i]$ intersects $G_i+W_i$ in a subspace of dimension at least $\dim(W_i)+t-s_i$. Hence, there exists a $(\dim(W_i)+t-s_i)$-subspace $W_{i+1}$ with $W_i\leq W_{i+1}\leq G_i+W_i$ such that 
	$$
	|\mathcal{F}[W_i]|
	\leq{\dim(G_i)-s_i\brack t-s_i}|\mathcal{F}[W_{i+1}]|\leq{\ell-t+1\brack 1}^{t-s_i}|\mathcal{F}[W_{i+1}]|.
	$$
	Continue the procedure until the first index $a$ for which $\dim(W_a)\geq\tau_t(\mathcal{G})$. Since $\dim(W_{i+1})-\dim(W_i)=t-s_i$ for $i\leq a-1$, we obtain
	$$
	|\mathcal{F}[W_0]|\leq{\ell-t+1\brack 1}^{\dim(W_a)-\dim(W_0)}|\mathcal{F}[W_{a}]|\leq{\ell-t+1\brack 1}^{\dim(W_a)-\dim(W_0)}{n-\dim(W_a)\brack k-\dim(W_a)}.
	$$
	For $z\geq\tau_t(\mathcal{G})$, the function ${\ell-t+1\brack 1}^{z}{n-z\brack k-z}$ is decreasing, since $n \geq k+\ell-t+1$. Consequently, 
	$$|\mathcal{F}[W]|=|\mathcal{F}[W_0]|\leq{\ell-t+1\brack 1}^{\tau_t(\mathcal{G})-t}{n-\tau_t(\mathcal{G})\brack k-\tau_t(\mathcal{G})},$$
as desired.
\end{proof}
For simplicity, we set
\begin{align}
	f(s_{1},s_{2},a,b,y)&={s_{1} \brack y}{b-y+1 \brack 1}^{s_{2}-y}{n-s_{2} \brack a-s_{2}},\label{eq:f}\\
	\overline{f}(s_{1},s_{2},a,b)&={s_{1}-t \brack 1}{b-t+1 \brack 1}^{s_{2}-t-1}{n-s_{2} \brack a-s_{2}}\label{eq:b-f}.
\end{align}
\begin{proposition}\label{proposition:2.4}
    Let $n \geq k+\ell-t+1$ and $\mathcal{F} \subseteq { V \brack k}$. Suppose that $\mathcal{G}$  is a collection of $t$-covers of $\mathcal{F}$, where each has dimension at most $\ell$. Then the following hold.
  \begin{itemize}
  \item[\rm(i)]$|\mathcal{F}|\leq f(\tau_{t}(\mathcal{F}),\tau_{t}(\mathcal{G}),k,\ell,t)$.
  \item[\rm(ii)]Assume that $S\in{V\brack s}$ is a $(t+1)$-cover of $\mathcal{F}$.\label{eq:2.6.i.1} Then $|\mathcal{F}|\leq f(s,\tau_{t}(\mathcal{G}),k,\ell+1,t+1)$. Moreover, if every $F\in\mathcal{F}$ contains a common $t$-subspace of $S$, then $|\mathcal{F}|\leq\overline{f}(s,\tau_{t}(\mathcal{G}),k,\ell)$.
  \end{itemize}
\end{proposition}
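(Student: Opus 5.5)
The plan is to reduce both parts to Lemma \ref{lemmakey}: first cover $\mathcal{F}$ by subfamilies $\mathcal{F}[H]$ with $H$ ranging over suitable subspaces of a fixed cover, then bound each $|\mathcal{F}[H]|$ with Lemma \ref{lemmakey}, and finally take a union bound.

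For (i), I set $\tau=\tau_t(\mathcal{F})$ and fix $T\in\mathcal{T}_t(\mathcal{F})$. Every $F\in\mathcal{F}$ satisfies $\dim(F\cap T)\geq t$, so $F$ contains some $t$-subspace $H\le T$, and
$$\mathcal{F}=\bigcup_{H\in{T\brack t}}\mathcal{F}[H].$$
Since $\dim H=t$, Lemma \ref{lemmakey} applies with $W=H$ and gives
$$|\mathcal{F}[H]|\leq{\ell-t+1\brack 1}^{\tau_t(\mathcal{G})-t}{n-\tau_t(\mathcal{G})\brack k-\tau_t(\mathcal{G})}.$$
There are ${\tau\brack t}$ choices of $H$, so the union bound yields exactly $f(\tau,\tau_t(\mathcal{G}),k,\ell,t)$.

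For the first claim of (ii), I use the $(t+1)$-cover $S$ in place of $T$: every $F\in\mathcal{F}$ contains some $(t+1)$-subspace $H\le S$, so $\mathcal{F}=\bigcup_{H\in{S\brack t+1}}\mathcal{F}[H]$. Lemma \ref{lemmakey} with $W=H$, $\dim W=t+1$, gives
$$|\mathcal{F}[H]|\leq{\ell-t+1\brack 1}^{\tau_t(\mathcal{G})-t-1}{n-\tau_t(\mathcal{G})\brack k-\tau_t(\mathcal{G})}.$$
The base ${\ell-t+1\brack 1}$ is exactly what appears in $f(s,\tau_t(\mathcal{G}),k,\ell+1,t+1)$, since there $b-y+1=(\ell+1)-(t+1)+1$. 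The exponent $\tau_t(\mathcal{G})-(t+1)$ also matches, and summing over the ${s\brack t+1}$ choices of $H$ gives the bound. One degenerate case needs a remark: if $\tau_t(\mathcal{G})<t+1$, the first case in the proof of Lemma \ref{lemmakey} still applies, so the exponent may be negative and the bound continues to hold.

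For the second claim of (ii), let $T_0\le S$ be the common $t$-subspace contained in every member of $\mathcal{F}$. Each $F$ meets $S$ in dimension at least $t+1$ and contains $T_0$, so $F\cap S$ contains a $(t+1)$-subspace $H$ with $T_0<H\le S$. By (\ref{eq:1}) there are only ${s-t\brack 1}$ such $H$, compared with ${s\brack t+1}$ before. Applying Lemma \ref{lemmakey} to each $\mathcal{F}[H]$ exactly as above and summing gives
$${s-t\brack 1}{\ell-t+1\brack 1}^{\tau_t(\mathcal{G})-t-1}{n-\tau_t(\mathcal{G})\brack k-\tau_t(\mathcal{G})},$$
which is $\overline{f}(s,\tau_t(\mathcal{G}),k,\ell)$ by (\ref{eq:b-f}).

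The step that needs the most care is checking the parameters in (ii), namely confirming that the $b=\ell+1$ convention in $f$ reproduces the base ${\ell-t+1\brack 1}$ coming from Lemma \ref{lemmakey}. The hypothesis $n\ge k+\ell-t+1$ of Lemma \ref{lemmakey} is inherited directly from the proposition, so that part requires no extra work.
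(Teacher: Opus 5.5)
Your proof is correct and follows the paper's argument. You write $\mathcal{F}$ as the union of the subfamilies $\mathcal{F}[H]$, with $H$ ranging over the $t$-subspaces of a minimum $t$-cover (respectively the $(t+1)$-subspaces of $S$, or the ${s-t\brack 1}$ of them containing the common $t$-subspace), bound each with Lemma \ref{lemmakey}, and add; the paper does the same, phrasing it as the number of choices of $H$ times the largest $|\mathcal{F}[H_1]|$.
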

\begin{proof}
Let $T$ be a $t$-cover of $\mathcal{F}$ of dimension $\tau_{t}(\mathcal{F})$. Then clearly $\mathcal{F}=\bigcup_{H \in {T \brack t}}\mathcal{F}[H]$. By picking $H_{1} \in {T \brack t }$ with $|\mathcal{F}[H]| \leq |\mathcal{F}[H_{1}]|$ whenever $H \in {T \brack t }$, we obtain that $|\mathcal{F}| \leq {\tau_{t}(\mathcal{F}) \brack t}|\mathcal{F}[H_{1}]|$. So (i) holds by applying Lemma \ref{lemmakey} to $W=H_1$. The proof of (ii) is almost the same, as we have $\mathcal{F}=\bigcup_{H \in {S \brack t+1 }}\mathcal{F}[H]$. In particular, suppose every $F\in\mathcal{F}$ contains $X\in{S\brack t}$, then $\mathcal{F}=\bigcup_{H}\mathcal{F}[H]$ with $H$ ranging over all $(t+1)$-subspaces of $S$ that contain $X$. There are ${s-t\brack 1}$ such subspaces, and so by the same argument as above, we derive $|\mathcal{F}|\leq\overline{f}(s,\tau_{t}(\mathcal{G}),k,\ell)$ from Lemma \ref{lemmakey} again.
\end{proof}
In the rest of this section, we collect several technical lemmas that will be used frequently in the subsequent sections.
\begin{Lemma}[\cite{Wen-Lv-vector}]\label{lem:8.4}
    Let $a,b\geq y$. If $n \geq a+b+1$ and $q \geq 3$, or $n \geq a+b+2$ and $q=2$, then the function $f(s,s, a, b,y)$ is decreasing as $s \in\{y, y+1, \ldots, a\}$ increases. In particular, we have $f(s,s, a, b,y) \leq {n-y \brack a-y}$, and equality holds precisely if $s=y$.
\end{Lemma}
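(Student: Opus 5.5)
The plan is to compare consecutive terms of $g(s):=f(s,s,a,b,y)={s\brack y}{b-y+1\brack 1}^{s-y}{n-s\brack a-s}$ and prove that $g(s+1)/g(s)<1$ for every $s\in\{y,\ldots,a-1\}$. Both the monotonicity claim and the ``in particular'' part follow from this. Indeed, $g(y)={y\brack y}{n-y\brack a-y}={n-y\brack a-y}$, and strict decrease gives $g(s)<g(y)$ for every $s>y$. Equality therefore holds exactly when $s=y$.

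By Lemma \ref{lem:8.1}(i), the ratio is
\[
\frac{g(s+1)}{g(s)}=\frac{q^{s+1}-1}{q^{s+1-y}-1}\cdot{b-y+1\brack 1}\cdot\frac{q^{a-s}-1}{q^{n-s}-1}.
\]
Here the last factor comes from ${n-s-1\brack a-s-1}=\frac{q^{a-s}-1}{q^{n-s}-1}{n-s\brack a-s}$. Each factor is then bounded with Lemma \ref{lem:8.1}(ii):
\[
\frac{q^{s+1}-1}{q^{s+1-y}-1}<q^{y+1}\ (\text{or }=1 \text{ if } y=0),\qquad
{b-y+1\brack 1}=\frac{q^{b-y+1}-1}{q-1}<q^{b-y+1}\cdot\frac{1}{q-1},\qquad
\frac{q^{a-s}-1}{q^{n-s}-1}<q^{a-n}.
\]

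For $q\ge 3$, I bound ${b-y+1\brack 1}<\frac{q}{q-1}q^{b-y}\le \frac32 q^{b-y}$. Multiplying the three bounds gives a ratio less than $\frac32 q^{y+1}q^{b-y}q^{a-n}=\frac32 q^{a+b+1-n}$. With $n\ge a+b+1$ this is at most $\frac32 q^{0}\cdot$, which is not yet below $1$, so the first factor needs a sharper bound. I will use
\[
\frac{q^{s+1}-1}{q^{s+1-y}-1}\le \frac{q^{y}\,q^{s+1-y}}{q^{s+1-y}-1}\le q^y\cdot\frac{q}{q-1}.
\]
This gives a ratio below $\left(\frac{q}{q-1}\right)^2 q^{a+b-n}$. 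For $q\ge3$ and $n\ge a+b+1$, that is at most $\frac94\cdot\frac13<1$. For $q=2$ the factor $\left(\frac{q}{q-1}\right)^2=4$ appears, and $n\ge a+b+2$ gives $4\cdot 2^{-2}=1$ with strict inequality inherited, so the ratio is again below $1$.

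The main obstacle is this constant bookkeeping in the boundary case $q=2$, $n=a+b+2$, where the bound is tight up to the strict inequalities. I will keep the strictness carefully: the factor $\frac{q^{a-s}-1}{q^{n-s}-1}<q^{a-n}$ is strict because $a-s<n-s$. When $y=0$ the first factor equals $1$, so that case is only easier. The hypothesis $a,b\ge y$ ensures that all Gaussian coefficients are well defined.
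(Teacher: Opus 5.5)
Your proof is correct and uses essentially the standard consecutive-ratio argument. The paper cites this lemma from \cite{Wen-Lv-vector} without proof, but its own proof of the companion Lemma \ref{lem:8.6} works the same way, and your bound $g(s+1)/g(s)<\bigl(\frac{q}{q-1}\bigr)^2q^{a+b-n}$ covers both regimes ($\le\frac{q}{(q-1)^2}\le\frac34$ for $q\ge3$, and $<1$ for $q=2$, $n\ge a+b+2$) with the strictness the equality case needs; in a write-up, drop the abandoned first estimate (the dangling ``$\frac32 q^0\cdot$'' line).
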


\begin{Lemma} \label{lem:8.6}
    Let $ a,b\geq t+1$. If $n \geq a+b-t+2$ and $q \geq 3$, or $n \geq a+b-t+3$ and $q = 2$, then the function  $\overline{f}(s,s,a,b)$ is strictly decreasing as $s \in \{t+1,\dots,a\}$ increases. In particular, we have $\overline{f}(s,s,a,b) \leq {n-t-1 \brack a-t-1}$, and equality holds precisely if $s=t+1$. 
\end{Lemma}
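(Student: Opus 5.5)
We need to prove Lemma \ref{lem:8.6}: $\overline{f}(s,s,a,b)={s-t\brack 1}{b-t+1\brack 1}^{s-t-1}{n-s\brack a-s}$ is strictly decreasing in $s\in\{t+1,\dots,a\}$. The plan is to show directly that the ratio of consecutive terms is less than $1$:
$$
R(s):=\frac{\overline{f}(s+1,s+1,a,b)}{\overline{f}(s,s,a,b)}=\frac{{s+1-t\brack 1}}{{s-t\brack 1}}\cdot{b-t+1\brack 1}\cdot\frac{{n-s-1\brack a-s-1}}{{n-s\brack a-s}}<1
$$
for $t+1\le s\le a-1$. The final claim then follows at once. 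The value at $s=t+1$ is ${1\brack 1}{n-t-1\brack a-t-1}$, which is exactly ${n-t-1\brack a-t-1}$. Strict monotonicity then gives equality only at $s=t+1$.

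We estimate each factor with Lemma \ref{lem:8.1}. By Lemma \ref{lem:8.1}(i),
$$\frac{{n-s-1\brack a-s-1}}{{n-s\brack a-s}}=\frac{q^{a-s}-1}{q^{n-s}-1},$$
which is less than $q^{a-n}$ by (ii). Also ${b-t+1\brack 1}=\frac{q^{b-t+1}-1}{q-1}$. The first factor is $\frac{q^{s+1-t}-1}{q^{s-t}-1}$, which is at most $q+1$ since $s-t\ge1$; the extreme case is $s=t+1$, giving $\frac{q^2-1}{q-1}=q+1$. So
$$R(s)<(q+1)\cdot\frac{q^{b-t+1}-1}{q-1}\cdot q^{a-n}.$$

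For $q\ge3$ we use $\frac{q+1}{q-1}\le 2<q$. This gives $R(s)<q\cdot q^{b-t+1}\cdot q^{a-n}=q^{a+b-t+2-n}\le1$ whenever $n\ge a+b-t+2$.

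For $q=2$ the crude bound only yields $R(s)<3\cdot 2^{b-t+1}\cdot 2^{a-n}$. This is less than $1$ once $2^{n-a-b+t-1}\geq 4>3$, i.e.\ $n\ge a+b-t+3$. That is exactly the stated hypothesis for $q=2$.

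The only delicate point is the first factor, whose worst case is $s=t+1$. That case is what forces the extra $+1$ when $q=2$, so the bookkeeping there must be done carefully, keeping strict inequalities so that strictness of the decrease is retained. Everything else is a routine application of Lemma \ref{lem:8.1}.
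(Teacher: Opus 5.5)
Your proposal is correct and follows essentially the paper's route: a consecutive-ratio test in which the factor $\frac{q^{s-t+1}-1}{q^{s-t}-1}$ is worst at $s=t+1$ (value $q+1$), combined with the ratio $\frac{q^{a-s}-1}{q^{n-s}-1}$ from Lemma~\ref{lem:8.1}(i). The only difference is cosmetic: the paper merges the denominators via $(q^x-1)(q^y-1)(q^z-1)<q^{x+y+z}-1$ and calls the final check routine, whereas you bound the Gaussian ratio by $q^{a-n}$ and carry out the $q\ge 3$ and $q=2$ cases explicitly.
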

\begin{proof}
For $x, y,z \geq 1$ with $x \leq y$, it is easy to check that
\begin{equation*}
    \frac{q^{x+1}-1}{q^{y+1}-1} \geq \frac{q^{x}-1}{q^{y}-1} \quad \text { and } \quad(q^{x}-1)(q^{y}-1)(q^{z}-1)<q^{x+y+z}-1.
\end{equation*}
For $s \in\{t+1, \ldots, a-1\}$, by Lemma \ref{lem:8.1} $\rm(i)$ and the inequalities above,
\begin{align}\label{eq:8.7.a}
\frac{\overline{f}(s,s, a, b)}{\overline{f}(s+1,s+1, a, b)}=& \frac{(q^{s-t}-1)(q-1)(q^{n-s}-1)}{(q^{s-t+1}-1)(q^{b-t+1}-1)(q^{a-s}-1)} \nonumber\\
\geq & \frac{(q-1)^{2}(q^{n-s}-1)}{(q^{2}-1)(q^{b-t+1}-1)(q^{a-s}-1)}> \frac{(q-1)^{2}(q^{n-s}-1)}{(q^{a+b-t+3-s}-1)}.
\end{align}
Then it is routine to check, by our assumption on $n$, that  $\overline{f}(s,s, a, b)>\overline{f}(s+1,s+1, a, b)$. Next, we have $\overline{f}(s,s,a,b)\leq\overline{f}(t+1,t+1, a, b)={n-t-1 \brack a-t-1}$, with equality if and only if $s=t+1$.
\end{proof}

Let us introduce the following expression.
\begin{align} \label{eq:4.2.*}
    g(k,\ell) = \sum_{i=t}^{\ell}q^{(k-i)(\ell-i)}{\ell \brack i}{n-\ell \brack k-i}.
\end{align}
Let $M$ be an $\ell$-subspace. It is routine to verify, based on Lemma \ref{lem:2-2}, that $g(k,\ell)$ equals the number of $k$-subspaces $F$ with $\dim(M \cap F) \geq t$. In particular,
\begin{align} \label{eq:4.2}
    g(k,t+1) = q^{k-t}{t+1 \brack 1}{n-t-1 \brack k-t}+{n-t-1 \brack k-t-1}.
\end{align}

\begin{Lemma}\label{lem:8.3}
    Let $k \geq t+1$ and $n \geq k+1$. Then $g(k,t+1)>q^{t}{n-t \brack k-t}> q^{n-k+t}{n-t-1 \brack k-t-1}$.
\end{Lemma}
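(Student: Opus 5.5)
We need to show two inequalities: $g(k,t+1)>q^{t}{n-t\brack k-t}$ and $q^{t}{n-t\brack k-t}>q^{n-k+t}{n-t-1\brack k-t-1}$. Both follow directly from the explicit formula (\ref{eq:4.2}) combined with Lemma \ref{lem:8.1}.

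\emph{Second inequality.} Apply Lemma \ref{lem:8.1}(i) with $s=n-t$ and $i=k-t$. Since $k\ge t+1$, we have $i\ge 1$, so
$$
{n-t\brack k-t}={n-t-1\brack k-t-1}+q^{k-t}{n-t-1\brack k-t}.
$$
Alternatively, Lemma \ref{lem:8.1}(iii) with $s=n-t$ and $i=k-t<s$ (valid since $n\ge k+1$) gives
$$
{n-t-1\brack k-t-1}<q^{k-n}{n-t\brack k-t}.
$$
Multiplying by $q^{n-k+t}$ yields $q^{n-k+t}{n-t-1\brack k-t-1}<q^{t}{n-t\brack k-t}$, which is the second inequality.

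\emph{First inequality.} Write ${t+1\brack 1}=q^{t}+{t\brack 1}$, so that $q^{k-t}{t+1\brack 1}\ge q^{k}$. By (\ref{eq:4.2}),
$$
g(k,t+1)\ge q^{k}{n-t-1\brack k-t}+{n-t-1\brack k-t-1}.
$$
Now compare with
$$
q^{t}{n-t\brack k-t}=q^{t}{n-t-1\brack k-t-1}+q^{k}{n-t-1\brack k-t}.
$$
The two $q^{k}{n-t-1\brack k-t}$ terms match, so the difference is
$$
g(k,t+1)-q^{t}{n-t\brack k-t}\ \ge\ q^{k-t}{t\brack 1}{n-t-1\brack k-t}-(q^{t}-1){n-t-1\brack k-t-1}.
$$
Here I restored the exact excess $q^{k-t}{t\brack 1}{n-t-1\brack k-t}$ from ${t+1\brack 1}-q^{t}={t\brack 1}$ rather than discarding it.

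We must show this is positive. There are two cases.

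\emph{Case $t=0$.} Both sides vanish, so equality holds and the first inequality is not strict. Presumably $t\ge 1$ throughout the paper, and we use that assumption.

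\emph{Case $t\ge 1$.} Use ${t\brack 1}=\frac{q^{t}-1}{q-1}$, and by Lemma \ref{lem:8.1}(i),
$$
{n-t-1\brack k-t}=\frac{q^{n-k}-1}{q^{k-t}-1}{n-t-1\brack k-t-1}.
$$
After dividing by $(q^{t}-1){n-t-1\brack k-t-1}$, it suffices to show
$$
q^{k-t}\,\frac{q^{n-k}-1}{(q-1)(q^{k-t}-1)}>1.
$$
This holds because $q^{k-t}>q^{k-t}-1$ and $\frac{q^{n-k}-1}{q-1}\ge 1$, the latter since $n\ge k+1$. This completes the first inequality.

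The only delicate point is the bookkeeping in the first inequality: one must keep the term $q^{k-t}{t\brack 1}{n-t-1\brack k-t}$ instead of dropping it, since it is exactly what dominates the deficit $(q^{t}-1){n-t-1\brack k-t-1}$.
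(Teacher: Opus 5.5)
Your proof is correct and is essentially the paper's. The second inequality is exactly the paper's appeal to Lemma \ref{lem:8.1}(iii). For the first, the paper does the same Lemma \ref{lem:8.1}(i) computation but packages it as the closed form $g(k,t+1)=\left(q^{t}+\frac{(q^{t}-1)(q^{n-t}-q^{k-t+1}+q-1)}{(q-1)(q^{n-t}-1)}\right){n-t\brack k-t}$, where you instead use Pascal's rule and cancellation.

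Your remark that strictness needs $t\ge 1$ is right: the paper's correction term also vanishes at $t=0$. Two small points: your ``$\ge$'' for $g(k,t+1)-q^{t}{n-t\brack k-t}$ is in fact an equality. Also, the ratio ${n-t-1\brack k-t}/{n-t-1\brack k-t-1}=\frac{q^{n-k}-1}{q^{k-t}-1}$ needs Lemma \ref{lem:8.1}(i) together with the symmetry ${s\brack i}={s\brack s-i}$, not (i) alone.
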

\begin{proof}
By Lemma \ref{lem:8.1} $\rm(i)$, 
\begin{equation*}
    g(k, t+1)=\left(q^{t}+ \frac{(q^{t}-1)(q^{n-t}-q^{k-t+1}+q-1)}{(q-1)(q^{n-t}-1)}\right){
n-t  \brack
k-t
}.
\end{equation*}
So $g(k,t+1)>q^{t}{n-t \brack k-t}$. The second inequality follows directly from Lemma \ref{lem:8.1} $\rm(iii)$.
\end{proof}

\begin{Lemma}[\cite{Wen-Lv-vector}]\label{lem:8.8}
    If $n \geq k>s \geq t$, then
$$
q^{-(t+2)}g(k, s+1)<g(k, s)<g(k, s+1).
$$
\end{Lemma}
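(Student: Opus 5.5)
The plan is to use the combinatorial meaning of $g$: for an $\ell$-subspace $M$, $g(k,\ell)$ counts the $k$-subspaces $F$ with $\dim(M\cap F)\geq t$. Both inequalities then follow from comparing an $s$-subspace with an $(s+1)$-subspace. Fix $M'\in{V\brack s+1}$, which is possible since $n\geq k>s$ gives $n\geq s+1$. Write $\mathcal{A}(X)$ for the set of $k$-subspaces $F$ with $\dim(F\cap X)\geq t$, so that $|\mathcal{A}(X)|=g(k,\dim X)$.

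\emph{Upper inequality.} Take $M\in{M'\brack s}$. Since $M\subseteq M'$, we have $\mathcal{A}(M)\subseteq\mathcal{A}(M')$, so $g(k,s)\leq g(k,s+1)$. For strictness I will exhibit a $k$-subspace $F$ with $\dim(F\cap M)=t-1$ and $\dim(F\cap M')=t$. Choose a $(t-1)$-subspace $A\leq M$ and a vector $v\in M'\setminus M$. Then set $F=A+\langle v\rangle+B$, where $B$ is a $(k-t)$-subspace meeting $M'$ trivially and independent from $A+\langle v\rangle$.

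Such a $B$ exists when $n-(s+1)\geq k-t$. This is the one point needing care: if $n-s\leq k-t$, then every $k$-subspace meets $M$ in dimension at least $k+s-n\geq t$, and the two counts coincide. So strictness requires this range, and I expect the intended application to lie in it. Checking this degenerate regime is the only delicate point; the rest is formal.

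\emph{Lower inequality.} I will double count pairs $(H,F)$ with $H\in{M'\brack s}$ and $F\in\mathcal{A}(H)$. Counting by $H$ gives exactly ${s+1\brack 1}g(k,s)$ pairs, since ${M'\brack s}$ has ${s+1\brack s}={s+1\brack 1}$ members. Counting by $F$, only $F\in\mathcal{A}(M')$ can contribute. For such $F$, pick a $t$-subspace $X\leq F\cap M'$. Every hyperplane $H$ of $M'$ containing $X$ satisfies $\dim(F\cap H)\geq t$, and by (\ref{eq:1}) there are ${s+1-t\brack s-t}={s+1-t\brack 1}$ of them. Hence
$$
{s+1\brack 1}g(k,s)\geq{s+1-t\brack 1}g(k,s+1).
$$

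Finally, Lemma \ref{lem:8.1}(ii) gives ${s+1\brack 1}/{s+1-t\brack 1}=(q^{s+1}-1)/(q^{s+1-t}-1)<q^{t+1}$, so $g(k,s)>q^{-(t+1)}g(k,s+1)>q^{-(t+2)}g(k,s+1)$. This is in fact slightly stronger than the stated lower bound.
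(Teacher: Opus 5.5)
Your proof is correct. The paper gives no proof of this lemma; it is quoted from \cite{Wen-Lv-vector}, so there is no in-paper argument to compare with. Your route is self-contained and needs nothing beyond (\ref{eq:1}) and Lemma \ref{lem:8.1}(ii). It goes through the counting interpretation of $g$ recorded after (\ref{eq:4.2.*}): inclusion for the upper bound, and double counting of pairs (hyperplane of $M'$, $k$-subspace) for the lower bound.

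Two remarks on what your argument buys.

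First, the double count gives the sharper ratio $g(k,s)/g(k,s+1)\geq{s+1-t\brack 1}/{s+1\brack 1}>q^{-(t+1)}$, and it never uses $k>s$. It only needs $t\le s\le n-1$ and $g(k,s+1)>0$. This matters here: in the proof of Lemma \ref{lem:8.9} the lower bound is applied to $g(k_j,s)$ for every $s\le k_i-1$. When $k_j<k_i$, this includes values $s\ge k_j$ that lie outside the stated hypothesis $k>s$, but your argument covers them.

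Second, your caveat about the upper inequality is not a gap in your proof but a genuine flaw in the statement. If $n\le k+s-t$, every $k$-subspace meets an $s$-subspace in dimension at least $k+s-n\ge t$, so $g(k,s)=g(k,s+1)={n\brack k}$; for instance, $n=k$ gives $1=1$. Thus $g(k,s)<g(k,s+1)$ holds precisely when $n\ge k+s-t+1$, which is exactly the range where your construction of $F$ works. This does no harm to the paper, since only the lower bound of Lemma \ref{lem:8.8} is ever invoked (in Lemma \ref{lem:8.9}).
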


\begin{Lemma} \label{lem:8.9}
    Let  $r\ge 3$, $k_{1} \geq k_{2} \geq \dots \geq k_{r} \geq t+1$ and $ \ell \geq t+1$. For distinct $i,j \in [r]$, set
$$
h_{i,j}(s)=g(k_{j},s) \bar{f}(s,s,k_{i},\ell)^{r-1}.
$$
If $n \geq k_{1}+\ell-t+2+\frac{t+2}{r-1}$ and $q \geq 3$, or $n \geq k_{1}+\ell-t+3+\frac{t+2}{r-1}$ and $q = 2$, then $h_{i,j}(s)$ is strictly decreasing as $s \in \{t+1,\dots,k_i\}$ increases. In particular, we have $h_{i,j}(s) \leq g(k_{j},t+1) {n-t-1 \brack k_{i}-t-1}^{r-1}$, and equality holds precisely if $s=t+1$. 
\end{Lemma}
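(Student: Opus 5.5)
We will compare consecutive values directly: for $s\in\{t+1,\dots,k_i-1\}$ we show $h_{i,j}(s)>h_{i,j}(s+1)$, which is equivalent to
\begin{equation*}
\frac{g(k_j,s+1)}{g(k_j,s)}<\left(\frac{\bar f(s,s,k_i,\ell)}{\bar f(s+1,s+1,k_i,\ell)}\right)^{r-1}.
\end{equation*}
Iterating this from $s=t+1$ gives the ``in particular'' part, since $\bar f(t+1,t+1,k_i,\ell)={t+1-t\brack 1}{n-t-1\brack k_i-t-1}={n-t-1\brack k_i-t-1}$. Equality then holds exactly at $s=t+1$ by strict monotonicity.

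For the left-hand side we use Lemma \ref{lem:8.8}. Its hypothesis is $n\ge k_j> s\ge t$; when $s\ge k_j$ we extend the range, or note that $g(k_j,s)$ is then essentially ${n\brack k_j}$-type and the ratio is still bounded, which we check separately. Lemma \ref{lem:8.8} gives $g(k_j,s)>q^{-(t+2)}g(k_j,s+1)$, so the left-hand side is less than $q^{t+2}$.

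For the right-hand side we reuse the computation (\ref{eq:8.7.a}) from the proof of Lemma \ref{lem:8.6}, with $a=k_i$ and $b=\ell$:
\begin{equation*}
\frac{\bar f(s,s,k_i,\ell)}{\bar f(s+1,s+1,k_i,\ell)}>\frac{(q-1)^2(q^{n-s}-1)}{q^{k_i+\ell-t+3-s}-1}.
\end{equation*}
We write $n-s=(k_i+\ell-t+3-s)+(n-k_i-\ell+t-3)$, so the last fraction is at least roughly $(q-1)^2q^{\,n-k_i-\ell+t-3}$. For $q\ge3$ we have $(q-1)^2\ge q$, so the base ratio is at least $q^{\,n-k_1-\ell+t-2}$ (using $k_i\le k_1$). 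The hypothesis $n\ge k_1+\ell-t+2+\frac{t+2}{r-1}$ gives $(r-1)(n-k_1-\ell+t-2)\ge t+2$. Raising to the power $r-1$ therefore yields at least $q^{t+2}$, which beats the left-hand side.

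For $q=2$ the factor $(q-1)^2=1$ costs one power of $q$, which is exactly absorbed by the extra $+1$ in the hypothesis on $n$.

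The main obstacle is bookkeeping the $-1$'s in $(q^{n-s}-1)/(q^{m}-1)$. This ratio exceeds $q^{n-s-m}$, so replacing it by a pure power is safe in the needed direction. We must also make sure the strictness survives in the borderline case where $(r-1)(n-k_1-\ell+t-2)=t+2$ exactly. There it comes from the strict inequality in Lemma \ref{lem:8.8} and the strict ``$>$'' in (\ref{eq:8.7.a}).
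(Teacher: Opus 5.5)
Your argument is the paper's proof. You compare $h_{i,j}(s)$ with $h_{i,j}(s+1)$, bound $g(k_j,s+1)/g(k_j,s)$ by $q^{t+2}$ via Lemma~\ref{lem:8.8}, and bound the $\bar f$-ratio by (\ref{eq:8.7.a}) with $(a,b)=(k_i,\ell)$. Then you raise to the power $r-1$ and absorb $q^{t+2}$ using the hypothesis on $n$. Your explicit bookkeeping is exactly the ``routine check'' the paper leaves out, and it is correct: integrality gives $n\ge k_1+\ell-t+3$, hence $n-s\ge k_i+\ell-t+3-s$; then $(q-1)^2\ge q$ for $q\ge 3$ together with $(r-1)(n-k_1-\ell+t-2)\ge t+2$, resp.\ $(r-1)(n-k_1-\ell+t-3)\ge t+2$ for $q=2$, finishes it.

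The step you leave open is the case $s\ge k_j$. It really occurs when $k_j<k_i$, and it lies outside the range $k>s$ of Lemma~\ref{lem:8.8}; the paper also cites Lemma~\ref{lem:8.8} there without checking this.

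Your suggested justification does not hold up. For $s\ge k_j$, $g(k_j,s)$ is in general not comparable to ${n\brack k_j}$: when $n-s-k_j+t$ is large it is roughly a $q^{-t(n-s-k_j+t)}$-fraction of it.

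A short fix works for all $s\ge t$. It uses only that $g(k_j,\ell)$ counts the $k_j$-subspaces meeting a fixed $\ell$-subspace in dimension at least $t$, which holds for every $\ell$. Fix an $(s+1)$-subspace $M'$ and double count pairs $(F,M)$ with $F\in{V\brack k_j}$, $M$ a hyperplane of $M'$, and $\dim(F\cap M)\ge t$. If $\dim(F\cap M')\ge t+1$, all ${s+1\brack 1}$ hyperplanes qualify. If $\dim(F\cap M')=t$, exactly the ${s+1-t\brack 1}$ hyperplanes containing $F\cap M'$ qualify. Hence ${s+1\brack 1}\,g(k_j,s)\ge{s+1-t\brack 1}\,g(k_j,s+1)$, so
\[
\frac{g(k_j,s+1)}{g(k_j,s)}\le\frac{q^{s+1}-1}{q^{s+1-t}-1}<q^{t+1}
\]
by Lemma~\ref{lem:8.1}~(ii). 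This is stronger than the $q^{t+2}$ you need, and with it your proof is complete.
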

\begin{proof}
Fix $s \in\{t+1, \ldots, k_i-1\}$. By Lemma \ref{lem:8.8} and the same argument as we derive (\ref{eq:8.7.a}), 
\begin{align*}
\frac{h_{i,j}(s)}{h_{i,j}(s+1)}&> q^{-t-2}\left(\frac{(q-1)^{2}(q^{n-s}-1)}{(q^{k_{i}+\ell-t+3-s}-1)} \right)^{r-1}\\
&=\left(\frac{(q-1)^{2}(q^{n-s}-1)}{q^{\frac{t+2}{r-1}}(q^{k_{i}+\ell-t+3-s}-1)} \right)^{r-1}\geq \left(\frac{(q-1)^{2}(q^{n-s}-1)}{q^{k_{1}+\ell-t+3+\frac{t+2}{r-1}-s}-1} \right)^{r-1}.
\end{align*}
Then it is routine to check that $h_{i,j}(s)>h_{i,j}(s+1)$. Hence the lemma follows from $h_{i,j}(t+1)= g(k_{j},t+1) {n-t-1 \brack k_{i}-t-1}^{r-1}$.
\end{proof}
\begin{Lemma}[\cite{Wen-Lv-vector}] \label{lem:8.10}
    Let $r\ge 3$, $k_{1} \geq k_{2} \geq \dots \geq k_{r} \geq t+1$ and $n \geq k_{1}+t+1$. Set
$$
h_{2}(a)=g\left(k_{a}, t+1\right) \prod_{i \neq a}{n-t-1 \brack k_{i}-t-1},\;a=1,2,\ldots,r.
$$
For each $a, b \in [r]$ with $a \geq b$, we have $h_{2}(a) \geq h_{2}(b)$, and equality holds if and only if $k_{a} = k_{b}$.
\end{Lemma}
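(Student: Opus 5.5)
The plan is to reduce the comparison of $h_2(a)$ and $h_2(b)$ to the monotonicity of a single one-variable function of the dimension $k$. For $a\geq b$ we have $k_a\leq k_b$. Dividing, all factors ${n-t-1\brack k_i-t-1}$ with $i\notin\{a,b\}$ cancel, and we get
$$
\frac{h_2(a)}{h_2(b)}=\frac{\varphi(k_a)}{\varphi(k_b)},\qquad\text{where}\quad \varphi(k):=\frac{g(k,t+1)}{{n-t-1\brack k-t-1}}.
$$
So it suffices to show that $\varphi$ is strictly decreasing on $\{t+1,\dots,k_1\}$. This gives $h_2(a)\geq h_2(b)$, with equality exactly when $k_a=k_b$.

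To compute $\varphi$, I would use the explicit formula (\ref{eq:4.2}):
$$
\varphi(k)=q^{k-t}{t+1\brack 1}\frac{{n-t-1\brack k-t}}{{n-t-1\brack k-t-1}}+1.
$$
From the definition of the Gaussian binomial (or by combining the two identities of Lemma \ref{lem:8.1}(i)), the ratio of consecutive coefficients is
$$
\frac{{n-t-1\brack k-t}}{{n-t-1\brack k-t-1}}=\frac{q^{n-k}-1}{q^{k-t}-1}.
$$
Hence
$$
\varphi(k)=1+{t+1\brack 1}\Bigl(1+\frac{1}{q^{k-t}-1}\Bigr)\bigl(q^{n-k}-1\bigr).
$$

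For $t+1\leq k\leq k_1$, both factors in this product are positive and strictly decreasing in $k$:
\begin{itemize}
\item $1+\frac{1}{q^{k-t}-1}$ is positive and strictly decreasing because $k-t\geq 1$;
\item $q^{n-k}-1$ is positive and strictly decreasing because $n\geq k_1+t+1$ gives $n-k\geq t+1\geq 1$.
\end{itemize}
A product of two positive, strictly decreasing functions is strictly decreasing, so $\varphi$ is strictly decreasing, as required.

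There is no real obstacle here. The only points to check are the exact form of the consecutive-coefficient ratio and that both factors stay positive on the whole range, which is where the hypotheses $k_r\geq t+1$ and $n\geq k_1+t+1$ are used.
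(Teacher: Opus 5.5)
Your proof is correct and complete. The paper gives no proof of this lemma; it quotes it from \cite{Wen-Lv-vector}, so there is no in-paper argument to compare yours against.

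Your argument is a natural direct verification. You write $h_2(a)=\varphi(k_a)\prod_{i=1}^{r}{n-t-1\brack k_i-t-1}$ with $\varphi(k)=g(k,t+1)/{n-t-1\brack k-t-1}$. The consecutive-coefficient ratio ${n-t-1\brack k-t}/{n-t-1\brack k-t-1}=\frac{q^{n-k}-1}{q^{k-t}-1}$ is right. So is the resulting closed form $\varphi(k)=1+{t+1\brack 1}\bigl(1+\frac{1}{q^{k-t}-1}\bigr)(q^{n-k}-1)$, and strict monotonicity of this product gives both the inequality and the equality case.

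Two minor points. First, the cancellation is legitimate because every ${n-t-1\brack k_i-t-1}$ is positive, since $t+1\le k_i\le k_1<n$. Second, your argument only uses $n\geq k_1+1$, which keeps $q^{n-k}-1>0$ on the whole range, so the hypothesis $n\ge k_1+t+1$ is stronger than you need.
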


\section{Product bounds for $r$-cross $t$-intersecting families} \label{section 3}

In this section, we derive product estimates for \(r\)-cross \(t\)-intersecting families in two complementary cases, according to the dimensions of certain partial intersections introduced below. Then we prove Theorems \ref{theorem:1} and \ref{theorem:3}. 
\subsection{Joint product estimates from partial intersections} \label{section 3.1}
We begin with a simple yet useful lemma.
\begin{Lemma}\label{lem:10.1}
    Let $r\geq3$, $n > d_{1}\geq d_{2}\geq\dots\geq d_{r-1}$ and $\mathcal{A}_{i} \subseteq { V \brack d_{i}}$ {\rm(}$i=1,2,\ldots,r-1${\rm)}. Put 
    \begin{align*}
        d = \min \{ \dim(\cap_{y \in [r-1]}A_{y}): A_{y} \in \mathcal{A}_{y}, y \in [r-1] \}.
    \end{align*}
    Then for each $i \in [r-1]$, we have 
    \begin{align*}
        \prod_{h \neq i}|\mathcal{A}_{h}| <  q^{(d+1)(d_{1}-d)-(r-2)(n-d_{1})}\prod_{ h \neq i}{ n-d+1 \brack d_{h}-d+1 }.
    \end{align*}
\end{Lemma}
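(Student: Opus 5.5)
The plan is to fix one extremal tuple attaining $d$, read off from it a $d$-cover of each family $\mathcal{A}_h$ ($h\neq i$), and control the dimensions of these covers jointly by Lemma \ref{lem:2.7}.

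\textbf{Setup.} Choose $A_y\in\mathcal{A}_y$ ($y\in[r-1]$) with $\dim(\cap_{y\in[r-1]}A_y)=d$. Write $G=\cap_{y}A_y$ and, for $h\in[r-1]$, $G_h=\cap_{y\neq h}A_y$, with $g_h=\dim G_h\geq d$.

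\textbf{Step 1: $G_h$ is a $d$-cover of $\mathcal{A}_h$.} For any $B\in\mathcal{A}_h$, the tuple obtained by replacing $A_h$ with $B$ is admissible in the definition of $d$. Hence $\dim(B\cap G_h)=\dim\bigl(B\cap\bigcap_{y\neq h}A_y\bigr)\geq d$. The argument of Lemma \ref{lem:2.6}, which is a union bound over ${G_h\brack d}$ and is valid for every $d\geq 0$, then gives
$$|\mathcal{A}_h|\leq {g_h\brack d}{n-d\brack d_h-d}.$$

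\textbf{Step 2: joint control of the $g_h$.} Apply Lemma \ref{lem:2.7} to the $r-1\geq 2$ subspaces $A_1,\dots,A_{r-1}$ with the distinguished index $i$. This yields
$$\sum_{h\neq i}g_h\leq d_i+(r-3)d, \qquad\text{that is,}\qquad \sum_{h\neq i}(g_h-d)\leq d_i-d\leq d_1-d.$$
When $r=3$ this is the trivial case of Lemma \ref{lem:2.7}, since then $G_h=A_i$.

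\textbf{Step 3: estimating the Gaussian factors.} By Lemma \ref{lem:8.1}(iv), ${g_h\brack d}\leq q^{d(g_h-d)+\min\{d,g_h-d\}}\leq q^{(d+1)(g_h-d)}$. Taking the product over $h\neq i$ and using Step 2 gives
$$\prod_{h\neq i}{g_h\brack d}\leq q^{(d+1)(d_1-d)}.$$
By Lemma \ref{lem:8.1}(i),(ii), applicable since $d_h<n$,
$${n-d\brack d_h-d}=\frac{q^{d_h-d+1}-1}{q^{n-d+1}-1}{n-d+1\brack d_h-d+1}<q^{d_h-n}{n-d+1\brack d_h-d+1}\leq q^{-(n-d_1)}{n-d+1\brack d_h-d+1}.$$
This inequality is strict. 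Multiplying over the $r-2$ indices $h\neq i$ gives the factor $q^{-(r-2)(n-d_1)}$ together with strict inequality, which completes the proof.

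\textbf{Main point.} The only nontrivial step is Step 2. It replaces $r-2$ separate bounds $g_h\leq d_1$ by a single joint bound $\sum_{h\neq i}(g_h-d)\leq d_1-d$, and this is exactly what Lemma \ref{lem:2.7} supplies. Everything else is routine. The edge cases $d=0$ and $g_h=d$ are covered by the same estimates, since then ${g_h\brack d}=1$.
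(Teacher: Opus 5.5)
Your proof is correct and follows essentially the same route as the paper. Like the paper, you fix a tuple attaining $d$, use $\cap_{y\neq h}A_y$ as a $d$-cover of $\mathcal{A}_h$ via Lemma \ref{lem:2.6}, bound ${g_h\brack d}\le q^{(d+1)(g_h-d)}$ by Lemma \ref{lem:8.1}(iv), and control $\sum_{h\neq i}(g_h-d)\le d_i-d\le d_1-d$ jointly through Lemma \ref{lem:2.7} applied to the $r-1$ subspaces. The only cosmetic difference is that you derive ${n-d\brack d_h-d}<q^{d_h-n}{n-d+1\brack d_h-d+1}$ from parts (i)--(ii) of Lemma \ref{lem:8.1}, whereas the paper quotes part (iii) directly.
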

\begin{proof}
    Pick $A_{y} \in \mathcal{A}_{y}\;(y \in [r-1])$ such that $\dim( \cap_{y \in [r-1]  } A_{y} ) = d$. Let $h \in [r-1]\setminus \{ i\}$ and $a_{h} = \dim(\cap_{y \in [r-1] \setminus \{ h\}} A_{y})$. By the definition of $d$, we obtain that $\cap_{y \in [r-1] \setminus \{ h\}} A_{y} $ is a $d$-cover of $\mathcal{A}_{h}$. By Lemma \ref{lem:2.6}, we have  
\begin{align*}
    |\mathcal{A}_{h}| \leq {a_{h}\brack d}{n- d \brack d_{h}-d}.
\end{align*} 
    Therefore, by Lemma \ref{lem:8.1} $\rm(iii)$ and $\rm(iv)$, we have
\begin{align*}
    \prod_{ h \neq i }|\mathcal{A}_{h}| &\leq \prod_{ h \neq i}\left({a_{h}\brack d}{n- d \brack d_{h}-d}\right) 
    < \prod_{ h \neq i}\left(q^{(d+1)(a_{h}-d)-(n-d_{h}) }{n- d+1 \brack d_{h}-d+1}\right) \\
     &\leq q^{(d+1)(d_{1}-d)-(r-2)(n-d_{1})} \prod_{ h \neq i}{ n-d+1 \brack d_{h}-d+1 },
\end{align*}
where the last inequality follows from Lemma \ref{lem:2.7} and $d_{1}\geq d_{2}\geq\dots\geq d_{r-1}$.
\end{proof}
Let $r \geq 3$ and $k_{1}\geq k_{2} \geq \dots\geq k_{r} \geq t$. Assume that $\mathcal{F}_{1} \subseteq { V \brack k_{1}}, \mathcal{F}_{2} \subseteq { V \brack k_{2}}, \dots,  \mathcal{F}_{r} \subseteq { V \brack k_{r}}$ are $r$-cross $t$-intersecting families. Following \cite{Wen-Lv-vector}, set
\begin{equation}\label{equfamgi}
    \mathcal{G}_{i}=\{ \cap_{j \neq i} F_{j} : F_{j} \in \mathcal{F}_{j}, j \in [r] \setminus \{ i\} \},\quad i \in [r].
\end{equation}
For every \(F_i\in\mathcal{F}_i\) and \(G\in\mathcal{G}_i\), the \(r\)-cross \(t\)-intersection property gives $\dim(F_i\cap G)\geq t$. Thus every member of \(\mathcal{G}_i\) is a \(t\)-cover of \(\mathcal{F}_i\), and vice versa. In that paper, the \(t\)-covering numbers of the families \(\mathcal{G}_i\) were used to estimate the individual families \(\mathcal{F}_i\). Let
\begin{equation}\label{equfamgij}
\mathcal{G}_{i, j}=\{ \cap_{h \neq i, j} F_{h} : F_{h} \in \mathcal{F}_{h}, h \in [r] \setminus \{ i,j\}\},\quad i\neq j \in [r].
\end{equation}
For a fixed \(F\in\mathcal{F}_i\), the quantity
\[
\min_{G\in\mathcal{G}_{i,j}}\dim(F\cap G)
\]
is the largest integer \(p\) for which \(F\) is a \(p\)-cover of \(\mathcal{G}_{i,j}\). We therefore set
\begin{equation}\label{equmij}
    m_{i,j}= \max_{F \in \mathcal{F}_{i}}\min_{ G \in \mathcal{G}_{i,j}} \dim( F \cap G)
\end{equation}
for distinct $i,j\in[r]$, and write
\begin{equation}\label{equdefm}
    m = \max_{i\neq j}m_{i,j}.
\end{equation}

The following lemma shows that an $m_{i,j}$-cover of $\mathcal{G}_{i,j}$ 
lying in $\mathcal{F}_i$ can be used to estimate the product of the sizes of the $(r-1)$ families other than $\mathcal{F}_i$.
\begin{Lemma} \label{lemma:joint-product}
    Let $n>k_{1}$. Assume that $\mathcal{F}_{1} \subseteq { V \brack k_{1}}, \mathcal{F}_{2} \subseteq { V \brack k_{2}},\dots,  \mathcal{F}_{r} \subseteq { V \brack k_{r}}$ are $r$-cross $t$-intersecting families. Let $i,j \in [r]$ be distinct. Then  
    \begin{equation*}
    \prod_{ h \neq i }|\mathcal{F}_{h}|
    <  q^{E} { n-t \brack k_{j}-t }\prod_{ h \neq i,j }{ n-m_{i,j}+1 \brack k_{h}-m_{i,j}+1 },
    \end{equation*}
   where $E=(m_{i,j}-t)t+\min\{t,m_{i,j}-t\}+(m_{i,j}+1)(k_{1}-m_{i,j})-(r-2)(n-k_{1})$.
\end{Lemma}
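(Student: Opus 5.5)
The plan is to fix one well-chosen member $F\in\mathcal{F}_i$ and use it twice: once to produce a small $t$-cover of $\mathcal{F}_j$, and once as an extra ``family'' to which Lemma \ref{lem:10.1} is applied together with the $r-2$ families $\mathcal{F}_h$, $h\neq i,j$. Write $m=m_{i,j}$. By (\ref{equmij}), choose $F\in\mathcal{F}_i$ with $\min_{G\in\mathcal{G}_{i,j}}\dim(F\cap G)=m$, and choose $G_0\in\mathcal{G}_{i,j}$ attaining this minimum, so $\dim(F\cap G_0)=m$.

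\emph{Step 1: bounding $\mathcal{F}_j$.} Write $G_0=\cap_{h\neq i,j}F_h$ with $F_h\in\mathcal{F}_h$. For every $F_j\in\mathcal{F}_j$ the subspace $F\cap G_0\cap F_j$ is an intersection of one member from each of the $r$ families. Hence it has dimension at least $t$, so $F\cap G_0$ is an $m$-dimensional $t$-cover of $\mathcal{F}_j$. Lemma \ref{lem:2.6} then gives $|\mathcal{F}_j|\le {m\brack t}{n-t\brack k_j-t}$. Lemma \ref{lem:8.1}(iv) bounds this by $q^{(m-t)t+\min\{t,m-t\}}{n-t\brack k_j-t}$, which accounts for the first two terms of $E$.

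\emph{Step 2: bounding the other $r-2$ families jointly.} Consider the $r-1$ families $\{F\}$ and $\mathcal{F}_h$ for $h\in[r]\setminus\{i,j\}$, with dimensions $k_i$ and $k_h$. For any choice $F_h\in\mathcal{F}_h$ we have $\dim(F\cap\bigcap_{h\neq i,j}F_h)\ge m$, because $\cap_{h\neq i,j}F_h\in\mathcal{G}_{i,j}$; the choice $G_0$ attains $m$. So the parameter $d$ of Lemma \ref{lem:10.1} equals $m$ exactly. Apply Lemma \ref{lem:10.1} with the singleton family $\{F\}$ as the omitted index. The left side is then $\prod_{h\neq i,j}|\mathcal{F}_h|$, and it is strictly less than $q^{(m+1)(d_1-m)-(r-2)(n-d_1)}\prod_{h\neq i,j}{n-m+1\brack k_h-m+1}$, where $d_1$ is the largest of the dimensions involved, so $d_1\le k_1$. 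The lemma assumes its families are indexed in decreasing order of dimension; this is only a relabelling. The exponent is increasing in $d_1$, since its derivative is $(m+1)+(r-2)>0$. Alternatively, the proof of Lemma \ref{lem:10.1} goes through verbatim with $d_1$ replaced by any upper bound on the dimensions, because Lemma \ref{lem:2.7} only uses $\dim T_i\le k_1$. Either way, $d_1$ may be replaced by $k_1$, which yields the remaining terms of $E$.

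\emph{Step 3.} Multiply the bounds from Steps 1 and 2. Strictness comes from the strict inequality in Lemma \ref{lem:10.1}. The hypothesis $n>k_1$ guarantees $n>d_1$, as Lemma \ref{lem:10.1} requires.

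The main point needing care is Step 2, namely checking that $\{F\}$ is a legitimate family in Lemma \ref{lem:10.1} with $d=m$ exactly (not merely $d\ge m$), and that the ordering and monotonicity in $d_1$ are handled correctly. When $r=3$ only one family remains besides $\{F\}$, and Lemma \ref{lem:10.1} still applies with $r-1=2$ families.
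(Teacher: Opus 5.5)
Your proposal is correct and follows the paper's proof step for step. You make the same choice of $F\in\mathcal{F}_i$ and $G\in\mathcal{G}_{i,j}$ realizing $m_{i,j}$, bound $\mathcal{F}_j$ through the $t$-cover $F\cap G$ using Lemma \ref{lem:2.6} and Lemma \ref{lem:8.1}(iv), and apply Lemma \ref{lem:10.1} to $\{F\}$ together with the $\mathcal{F}_h$, $h\neq i,j$, with $d=m_{i,j}$. You also justify replacing the largest dimension $d_1$ in Lemma \ref{lem:10.1} by $k_1$, either by monotonicity of the exponent or because its proof only needs an upper bound on the dimensions; the paper leaves this detail implicit.
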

\begin{proof}
Pick $F_{i} \in \mathcal{F}_{i}$ and $G\in \mathcal{G}_{i,j}$ such that $$\dim( F_{i} \cap G ) =\min\limits_{ G_{i,j} \in \mathcal{G}_{i,j}} \dim( F_{i} \cap G_{i,j}) = m_{i,j}.$$ By the definition of $\mathcal{G}_{i,j}$ and the assumption that $\mathcal{F}_1,\ldots,\mathcal{F}_r$ are $r$-cross $t$-intersecting, we have $\dim(F_{j}\cap F_{i} \cap G) \geq t$ for all $F_{j} \in \mathcal{F}_{j}$, and so $F_{i} \cap G$ is a $t$-cover of $\mathcal{F}_{j}$. Then by Lemma \ref{lem:2.6},
\begin{align} \label{eq:3.2.3.1}
    |\mathcal{F}_{j}| \leq {m_{i,j} \brack t} {n-t \brack k_{j}-t} \leq q^{\,t(m_{i,j}-t)+\min\{t,\;m_{i,j}-t\}}{n-t \brack k_{j}-t},
\end{align}
where the second inequality follows from Lemma \ref{lem:8.1} $\rm(iv)$.

Next, we consider the $(r-1)$ families $\{ F_{i}\} \subseteq \mathcal{F}_{i}$ and $ \mathcal{F}_{h}\;(h \in [r] \setminus \{ i,j\})$. Observe that
\begin{align*}
    \min \{ \dim(F_{i}\cap(\cap_{h \in [r]\setminus\{i,j\}}F_{h})): F_{h} \in \mathcal{F}_{h}, h \in [r]\setminus\{i,j\} \}= \min_{ G_{i,j} \in \mathcal{G}_{i,j}} \dim( F_{i} \cap G_{i,j}) =m_{i,j}.
\end{align*}
By Lemma \ref{lem:10.1}, we have
\begin{align} \label{eq:3.2.3.3}
    \prod_{ h \neq i,j }|\mathcal{F}_{h}|
    <  q^{(m_{i,j}+1)(k_{1}-m_{i,j})-(r-2)(n-k_{1})} \prod_{ h \neq i,j }{ n-m_{i,j}+1 \brack k_{h}-m_{i,j}+1 }.
\end{align}
  Then the desired inequality follows from (\ref{eq:3.2.3.1}) and (\ref{eq:3.2.3.3}).
\end{proof}
The following lemma shows that if $m$ exceeds an explicit threshold, then the product of sizes of an $r$-tuple of $r$-cross $t$-intersecting families is small.
\begin{Lemma}\label{lemma:large-m}
	Let $\nu\geq t$ with $(r-1)(\nu-t+1)\geq k_1+t$, 
	and let $n\geq k_1+\nu+2$. Assume that
	$\mathcal{F}_1\subseteq{V\brack k_1},\ldots,
	\mathcal{F}_r\subseteq{V\brack k_r}$
	are $r$-cross $t$-intersecting families satisfying
	$m\geq\nu+1$. Then
	\[
	\prod_{h=1}^r|\mathcal{F}_h|
	<
	\prod_{h=1}^r{n-t\brack k_h-t}.
	\]
\end{Lemma}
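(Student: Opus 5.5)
The plan is to fix a pair $i\neq j$ with $m_{i,j}=m\geq \nu+1$ and bound $\prod_{h\neq i}|\mathcal{F}_h|$ with Lemma \ref{lemma:joint-product}. The remaining family $\mathcal{F}_i$ will be bounded crudely with a single $t$-cover. Then I compare the product with $\prod_h{n-t\brack k_h-t}$ factor by factor, tracking only powers of $q$.

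\emph{Step 1 (the family $\mathcal{F}_i$).} Every member of $\mathcal{G}_i$ is a $t$-cover of $\mathcal{F}_i$, and it has dimension at most $k_r\leq k_1$. So Lemma \ref{lem:2.6} together with Lemma \ref{lem:8.1} (iv) gives
\[
|\mathcal{F}_i|\leq {k_1\brack t}{n-t\brack k_i-t}\leq q^{t(k_1-t)+t}{n-t\brack k_i-t}.
\]
If this turns out too lossy, I can instead use a member of $\mathcal{G}_i$ of dimension $\tau_t(\mathcal{G}_i)$, or apply Lemma \ref{lemma:joint-product} a second time with the roles of $i$ and $j$ swapped. The crude bound should suffice, though, because the gain from Step 2 grows linearly in $r-2$.

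\emph{Step 2 (the other $r-1$ families).} Lemma \ref{lemma:joint-product} gives
\[
\prod_{h\neq i}|\mathcal{F}_h|<q^{E}{n-t\brack k_j-t}\prod_{h\neq i,j}{n-m+1\brack k_h-m+1}.
\]
For each $h\neq i,j$, I convert ${n-m+1\brack k_h-m+1}$ into ${n-t\brack k_h-t}$ using Lemma \ref{lem:8.1} (iii) repeatedly ($m-1-t$ times). Each step gains a factor of at most $q^{-(n-k_h)}\leq q^{-(n-k_1)}$, so
\[
{n-m+1\brack k_h-m+1}\leq q^{-(m-1-t)(n-k_1)}{n-t\brack k_h-t}.
\]
Multiplying everything together, it suffices to show that the total exponent
\[
(m-t)t+t+(m+1)(k_1-m)-(r-2)(n-k_1)+t(k_1-t)+t-(r-2)(m-1-t)(n-k_1)
\]
is at most $0$. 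After simplification the condition reads
\[
(r-2)(m-t)(n-k_1)\geq (m+1)(k_1-m)+t(k_1+m-2t)+2t .
\]

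\emph{Step 3 (the numerical inequality, the main obstacle).} I use $n-k_1\geq \nu+2$ and $m\geq\nu+1$, together with $(r-1)(\nu-t+1)\geq k_1+t$. The worry is that the left side carries only the factor $r-2$, while the hypothesis is phrased with $r-1$. To handle this, I would write
\[
(r-2)(m-t)(n-k_1)\geq (r-2)(m-t)(\nu+2),
\]
and compare it with the right side viewed as a function of $m$. The right side is a concave quadratic in $m$, while the left side is linear and increasing in $m$. So it should suffice to check the endpoint $m=\nu+1$ and to use $k_1-m\leq k_1-\nu-1$.

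At $m=\nu+1$, I need
\[
(r-2)(\nu+1-t)(\nu+2)\geq (\nu+2)(k_1-\nu-1)+t(k_1+\nu+1-2t)+2t .
\]
Since $(r-2)(\nu+1-t)\geq k_1+t-(\nu+1-t)=k_1-\nu-1+2t$, the left side is at least $(\nu+2)(k_1-\nu-1)+2t(\nu+2)$. So the remaining requirement is $2t(\nu+2)\geq t(k_1+\nu+1-2t)+2t$, i.e.\ roughly $\nu+2t+1\geq k_1$. That is not automatic when $r$ is large. This is precisely where the crude Step 1 bound is too weak.

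I therefore expect to refine Step 1 by using $F_i\cap G$ again. The subspace $F_i\cap G$ is an $m$-subspace that meets each member of $\mathcal{F}_j$ in dimension at least $t$. Alternatively, I can bound $\prod_{h\neq j}$ symmetrically and take the geometric mean of the two estimates, so that the loss $t(k_1-t)$ is halved. The other option is the strict slack $q^{-(n-k_h)}$ with $n-k_h\geq\nu+2$, used against $k_1-\nu-1$.

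Concretely, I will try the symmetric averaging first. Applying Lemma \ref{lemma:joint-product} for $(i,j)$ and bounding $|\mathcal{F}_i|$ via Lemma \ref{lemmakey} with $\mathcal{G}=\mathcal{G}_i$ (which has $\tau_t(\mathcal{G}_i)\leq k_r$) costs at most a factor $q^{t(k_1-t)+t}$. That cost is absorbed once the gain $(r-2)(m-1-t)(n-k_1)$ is compared with $(r-1)(\nu-t+1)\geq k_1+t$ using $n-k_1\geq \nu+2\geq t+2$.
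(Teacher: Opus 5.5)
\textbf{There is a genuine gap, in Step 1: your bound on $|\mathcal{F}_i|$ is too weak, and your own Step 3 computation shows it.}

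Your Step 2 is the paper's argument: Lemma~\ref{lemma:joint-product} followed by Lemma~\ref{lem:8.1}\,(iii). Your Step 3 manipulation $(r-2)(\nu+1-t)\geq k_1-\nu-1+2t$ is also the right one.

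The problem is that bounding $|\mathcal{F}_i|$ through a $t$-cover of dimension up to $k_1$ costs $q^{t(k_1-t)+t}$, and this is not absorbed. Take $t=1$, $r=11$, $k_1=99$, $\nu=10$ (so $(r-1)(\nu-t+1)=k_1+t$), $m=11$ and $n=k_1+12$. Your total exponent is then $10+1+1056+99-1080=86>0$. So your closing claim that this cost ``is absorbed'' contradicts your own Step 3.

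None of your fallbacks repairs this:
\begin{itemize}
\item $F_i\cap G$ is a $t$-cover of $\mathcal{F}_j$, not of $\mathcal{F}_i$.
\item Swapping $i$ and $j$ gives an estimate governed by $m_{j,i}$, which may equal $t$ and then yields no gain.
\item Halving the loss still leaves a positive exponent in the example.
\item Lemma~\ref{lemmakey} with all of $\mathcal{G}_i$ needs $n\geq k_i+\ell-t+1$, where $\ell$ is only bounded by $\min_{h\neq i}k_h$. In general this far exceeds $k_1+\nu+2$, which is exactly why the paper truncates to $\mathcal{G}_i^{p}$ in the complementary case.
\end{itemize}
Your idea of using a small member of $\mathcal{G}_i$ points the right way, but you never show that $\mathcal{G}_i$ contains a member of dimension at most $m$. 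Also, $\tau_t(\mathcal{G}_i)$ is not the dimension of a member of $\mathcal{G}_i$.

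\textbf{The missing observation.} Because $m$ is the \emph{maximum} of all the $m_{a,b}$, you also have $m_{j,i}\leq m$. Hence there exist $F_j\in\mathcal{F}_j$ and $G\in\mathcal{G}_{j,i}$ with $\dim(F_j\cap G)\leq m$. Now $F_j\cap G\in\mathcal{G}_i$ is a $t$-cover of $\mathcal{F}_i$, so Lemmas~\ref{lem:2.6} and~\ref{lem:8.1}\,(iv) give
\[
|\mathcal{F}_i|\leq{m\brack t}{n-t\brack k_i-t}\leq q^{t(m-t+1)}{n-t\brack k_i-t}.
\]
This replaces $t(k_1-t)+t$ by $t(m-t)+t$. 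The total exponent becomes $2t(m-t+1)+(m+1)(k_1-m)-(r-2)(m-t)(n-k_1)$. At $m=\nu+1$, your Step 3 argument now only needs $2t(\nu+2)\geq 2t(\nu+1-t)+2t$, which holds with slack $2t^2$. In the example above the exponent becomes $-2$.

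One minor point: ``concave versus linear'' alone does not justify reducing to $m=\nu+1$. You must also check that the slope $(r-2)(n-k_1)$ of the left side dominates the slope of the right side at $m=\nu+1$, which it does. Alternatively, factor out $m-t$ as the paper does.
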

\begin{proof}
Let $i,j \in [r] $ be distinct indices with $m_{i,j} = m$. By Lemma \ref{lemma:joint-product} and $
\min\{t,m-t\}\leq t$, 
\begin{align} \label{eq:3.2.2.3}
    \prod_{ h \neq i }|\mathcal{F}_{h}|
    &<  q^{(m-t+1)t+(m+1)(k_{1}-m)-(r-2)(n-k_{1})} { n-t \brack k_{j}-t }\prod_{ h \neq i,j}{ n-m+1 \brack k_{h}-m+1 }\nonumber \\
    &<  q^{(m-t+1)t+(m+1)(k_{1}-m)-(r-2)(m-t)(n-k_{1})}\prod_{ h \neq i}{ n-t \brack k_{h}-t },
\end{align} 
where the second inequality follows from Lemma \ref{lem:8.1} $\rm(iii)$.

On the other hand, since $m_{j,i} \leq m$, there exist $F_{j} \in \mathcal{F}_{j}, G \in \mathcal{G}_{j,i}$ such that $\dim( F_{j} \cap G ) \leq m$. Observe that $F_{j} \cap G$ is a $t$-cover of $\mathcal{F}_{i}$. By Lemmas \ref{lem:2.6} and \ref{lem:8.1} $\rm(iv)$, we have 
\begin{align} \label{eq:3.2.2.2}
    |\mathcal{F}_{i}| \leq {m \brack t} {n-t \brack k_{i}-t} \leq q^{(m-t+1)t}{n-t \brack k_{i}-t}.
\end{align}
where the second inequality follows from Lemma \ref{lem:8.1} $\rm (iv)$.
 
Combining (\ref{eq:3.2.2.3}) and (\ref{eq:3.2.2.2}), we obtain 
\begin{align*}
    \prod_{ h = 1}^{r}|\mathcal{F}_{h}|< q^{2t(m-t+1)+(m+1)(k_{1}-m)-(r-2)(m-t)(n-k_{1})} \prod_{ h = 1}^{r}{ n-t \brack k_{h}-t }.
\end{align*}
	It remains to show that the exponent of $q$ is negative.
By rewriting it as 
$$
(m-t)\left(k_1+t-m-1+
\frac{(t+1)(k_1-t)+2t}{m-t}
-(r-2)(n-k_1)\right),
$$
we see that it is strictly decreasing as $m\geq\nu+1$ increases. It therefore
suffices to consider $m=\nu+1$. Substituting $m=\nu+1$ into the original
exponent, we obtain
\begin{align*}
	2t(\nu-t+2)+(\nu+2)(k_1-\nu-1)
	-(r-2)(\nu-t+1)(n-k_1).
\end{align*}
Since $n\geq k_1+\nu+2$, this is at most
$$2t(\nu-t+2)
+(\nu+2)\bigl(k_1-t-(r-1)(\nu-t+1)\bigr)\leq2t(\nu-t+2)-2t(\nu+2)
=-2t^2<0,$$
where the first inequality follows from
$(r-1)(\nu-t+1)\geq k_1+t$. This completes the proof.
\end{proof}

\subsection{Truncated families of $t$-covers} \label{section 3.2}
We now consider the complementary case in which all the parameters \(m_{i,j}\) are small. For each $i\in [r]$ and $p \geq t$, we denote
\begin{equation}\label{equgip}
\mathcal{G}^{p}_{i}=\{ G \in \mathcal{G}_{i} :  \dim G \leq p\},
\end{equation}
and write 
$$\tau_{t,i}^{p}:=\tau_{t}(\mathcal{G}^{p}_{i}).$$ 
In particular, $\mathcal{G}^{p}_{i}$ is a collection of \(t\)-covers of \(\mathcal{F}_i\), all of dimension at most \(p\). Let us record several basic properties of these parameters.
\begin{Lemma} \label{lem:3-A}
With the notation above, let $i,j\in[r]$ be distinct and
suppose that $m_{i,j}\leq p$. Then the following statements hold.
    \begin{itemize}
	\item [$\rm (i)$] for each $F_{i} \in \mathcal{F}_{i}$, there exists $G \in \mathcal{G}_{i,j}$ such that $\dim(F_{i} \cap G) \leq p$; in particular, $\mathcal{G}_{j}^{p} \neq \varnothing$;
	\item [$\rm (ii)$] $\tau_{t}(\mathcal{F}_{i}) \leq p$ and $\tau_{t,i}^{p} \leq k_{i}$;
	\item [$\rm (iii)$] $\dim(T_{j} \cap F_{i} ) \geq t$ for each $T_{j} \in \mathcal{T}_{t}(\mathcal{G}_{j}^{p})$ and $F_{i} \in \mathcal{F}_{i}$;
	\item [$\rm (iv)$] $\tau_{t}(\mathcal{F}_{i}) \leq \tau_{t,j}^{p}$.
\end{itemize}
\end{Lemma}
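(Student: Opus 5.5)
The plan is to unwind the definitions. Let $F_i\in\mathcal{F}_i$ be arbitrary and let $G\in\mathcal{G}_{i,j}$. The key identity is that $F_i\cap G=\cap_{h\neq j}F_h$ for suitable $F_h\in\mathcal{F}_h$, where the members $F_h$ ($h\neq i,j$) are those used to form $G$. Hence $F_i\cap G$ is a member of $\mathcal{G}_j$.

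\emph{Part} (i). By definition of $m_{i,j}$ we have $\min_{G\in\mathcal{G}_{i,j}}\dim(F_i\cap G)\le m_{i,j}\le p$. So some $G\in\mathcal{G}_{i,j}$ satisfies $\dim(F_i\cap G)\le p$. By the identity above, $F_i\cap G\in\mathcal{G}_j$ with dimension at most $p$, so $F_i\cap G\in\mathcal{G}_j^p$ and this family is nonempty. (We need $\mathcal{F}_i\neq\varnothing$; this is implicit, since otherwise the quantities are vacuous.)

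\emph{Part} (ii). With $F_i\cap G$ as in (i), I will use that every member of $\mathcal{G}_j$ is a $t$-cover of $\mathcal{F}_j$. That gives a different index from the one (ii) needs, so instead I use symmetric reasoning. Since every member of $\mathcal{G}_i$ is a $t$-cover of $\mathcal{F}_i$, it suffices to find a member of $\mathcal{G}_i$ of dimension at most $p$. The subspace $F_i\cap G$ is contained in $G=\cap_{h\neq i,j}F_h$. For every $F_j\in\mathcal{F}_j$, the intersection $F_j\cap G\in\mathcal{G}_i$ contains $F_i\cap F_j\cap G$, which has dimension at least $t$. Rather than this route, the direct approach is simpler: $T:=F_i\cap G$ satisfies $\dim(T\cap F_i')\ge t$ for all $F_i'\in\mathcal{F}_i$. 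This holds because $T\cap F_i'=F_i\cap F_i'\cap\bigcap_{h\neq i,j}F_h$, and this contains $F_i'\cap\bigcap_{h\ne i}F_h$ only if we intersect further with some $F_j$. So one extra step is needed: pick any $F_j\in\mathcal{F}_j$ and set $T'=F_j\cap G\cap F_i$. Then $T'\cap F_i'\supseteq F_i'\cap F_j\cap\bigcap_{h\neq i,j}F_h\cap F_i$, and this needs $r$-cross intersection among $F_i,F_i'$, which is not given.

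The cleaner argument is as follows. The subspace $F_i\cap G\in\mathcal{G}_j$ has dimension at most $p$ and is a $t$-cover of $\mathcal{F}_j$. Applying (iii) (proved below) with the roles arranged appropriately, together with (iv), gives $\tau_t(\mathcal{F}_i)\le\tau^p_{t,j}\le p$, since any member of $\mathcal{G}_j^p$ is itself a $t$-cover of $\mathcal{G}_j^p$ trivially by its own dimension; this last point needs care and is treated in (iv). For $\tau^p_{t,i}\le k_i$: every $F_i\in\mathcal{F}_i$ is a $t$-cover of $\mathcal{G}_i\supseteq\mathcal{G}_i^p$, and it has dimension $k_i$.

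\emph{Part} (iii), the main step. Let $T_j\in\mathcal{T}_t(\mathcal{G}_j^p)$ and $F_i\in\mathcal{F}_i$. By (i), there is $G\in\mathcal{G}_{i,j}$ with $F_i\cap G\in\mathcal{G}_j^p$. Then $\dim(T_j\cap F_i)\ge\dim(T_j\cap F_i\cap G)\ge t$, because $T_j$ $t$-covers $F_i\cap G$.

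\emph{Part} (iv). By (iii), any $T_j\in\mathcal{T}_t(\mathcal{G}_j^p)$ is a $t$-cover of $\mathcal{F}_i$, so $\tau_t(\mathcal{F}_i)\le\dim T_j=\tau^p_{t,j}$.

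Returning to (ii): we need $\tau^p_{t,j}\le p$, which is not immediate. Instead I use a direct witness. Fix $F_i^0\in\mathcal{F}_i$ and $G^0$ as in (i), and set $T=F_i^0\cap G^0$, so $\dim T\le p$. For any $F_i\in\mathcal{F}_i$, apply (i) to obtain $G$ with $F_i\cap G\in\mathcal{G}_j^p$. Then $T\cap F_i\supseteq F_i^0\cap F_i\cap G^0\cap G$. This is not obviously of dimension at least $t$, so I expect this step to be the obstacle. The fallback is to read $m_{i,j}\le p$ as giving the cover through $\mathcal{G}_i$: $F_i^0\cap G^0\in\mathcal{G}_j$, and symmetrically any $F_j\cap G'\in\mathcal{G}_i$ with $G'\in\mathcal{G}_{j,i}=\mathcal{G}_{i,j}$. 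Taking $G'=G^0$ and any $F_j$ gives $\dim(F_j\cap G^0)$, which is not controlled. So I will try proving $\tau_t(\mathcal{F}_i)\le p$ via (iv) combined with the fact that $\mathcal{G}_j^p$ has a $t$-cover of dimension at most $p$; that fact would hold if $\mathcal{G}_j^p$ were $t$-intersecting in the sense of sharing a $t$-cover inside a single member, and this is what I would try to verify first.
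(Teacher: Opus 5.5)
Your arguments for (i), (iii), (iv) and for $\tau^{p}_{t,i}\le k_i$ are correct and coincide with the paper's. The gap is the first assertion of (ii), $\tau_t(\mathcal{F}_i)\le p$, which you never prove. Your final plan cannot succeed, because $\tau^{p}_{t,j}\le p$ is false in general: the family $\mathcal{G}^{p}_j$ need not be $t$-intersecting, so a member of it need not $t$-cover it.

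In fact the assertion does not follow from $m_{i,j}\le p$ alone. Take $r=3$, $k_1\ge k_2=k_3>t$, a $k_2$-subspace $A$, $\mathcal{F}_2=\mathcal{F}_3=\{A\}$, and let $\mathcal{F}_1$ consist of all $k_1$-subspaces $F$ with $\dim(F\cap A)=t$. These families are $3$-cross $t$-intersecting and $\mathcal{G}_{1,2}=\{A\}$, so $m_{1,2}=t\le p$ for every $p$ with $t\le p<k_2$. Let $\dim T\le p<k_2$. Then $A\nsubseteq T$, so some $t$-subspace $X\le A$ satisfies $X\nsubseteq T$. Choose $Y$ of dimension $k_1-t$ with $Y\cap(A+T)=0$, which is possible once $n\ge k_1+k_2+p-t$. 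The subspace $F=X+Y$ lies in $\mathcal{F}_1$, and $F\cap T=X\cap T$ has dimension less than $t$. Hence $\tau_t(\mathcal{F}_1)=k_2>p$. Moreover, $\mathcal{G}^{p}_2$ is the set of all $t$-subspaces of $A$, so $\tau^{p}_{t,2}=k_2>p$ as well, which rules out your fallback.

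The missing ingredient is precisely the quantity you flagged as ``not controlled'': a member of $\mathcal{G}^{p}_i$. The paper's proof of (ii) takes any $G\in\mathcal{G}^{p}_i$, which is a $t$-cover of $\mathcal{F}_i$ of dimension at most $p$. This tacitly uses $\mathcal{G}^{p}_i\neq\varnothing$. That follows from your part (i) with $i$ and $j$ interchanged, i.e.\ from $m_{j,i}\le p$; in the example, $m_{2,1}=k_2>p$ and $\mathcal{G}^{p}_1=\varnothing$.

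Wherever the lemma is applied, one has $m\le\nu$ or $m\le\nu'$, which bounds every $m_{h,\ell}$ at once, so the paper's one-line argument is valid there. To complete your proof you must add this hypothesis. From $m_{i,j}\le p$ alone, what your part (i) does give is $\tau_t(\mathcal{F}_j)\le p$, since $F_i\cap G\in\mathcal{G}^{p}_j$ is a $t$-cover of $\mathcal{F}_j$.
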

\begin{proof}
$\rm (i)$ Let $F_{i} \in \mathcal{F}_{i} $. By the definition of $m_{i,j}$ and $m$, there exists $G \in \mathcal{G}_{i,j}$ such that $$\dim(F_{i} \cap G) \leq m_{i,j}\leq p.$$ In particular, we have $F_{i}\cap G \in \mathcal{G}_{j}^{p}$ from $F_{i}\cap G \in \mathcal{G}_{j}$. Thus $\mathcal{G}_{j}^{p}$ is non-empty.
 
 $\rm (ii)$ For each $G \in \mathcal{G}_{i}^{p}$ and $F_i \in \mathcal{F}_i$, the $r$-cross $t$-intersecting property gives $\dim(F_i \cap G) \ge t$. Thus $\mathcal{F}_i$ and $\mathcal{G}_{i}^{p}$ are cross $t$-intersecting. Take any $G \in \mathcal{G}_{i}^{p}$, we obtain 
 $\tau_{t}(\mathcal{F}_{i}) \leq \dim G \leq p$. Similarly, take any $F_i \in \mathcal{F}_i$, it follows that 
 $\tau_{t,i}^{p} = \tau_t(\mathcal{G}_i^p) \leq \dim F_i = k_i$.
 
 $\rm (iii)$ Let $T_{j}$ be a $t$-cover of $\mathcal{G}^{p}_{j}$. For each $ F_{i} \in \mathcal{F}_{i}$, by $\rm (i)$, there exists $G \in \mathcal{G}_{i,j}$ such that $F_{i} \cap G \in \mathcal{G}^{p}_{j}$, implying that $\dim(T_{j}\cap F_{i}) \geq \dim(T_{j} \cap F_{i} \cap G) \geq t$. 
 
 $\rm (iv)$ By the definition of $t$-cover and $\rm (iii)$, we have $\tau_{t}(\mathcal{F}_{i}) \leq \tau_{t,j}^{p}$.
\end{proof}

\begin{Lemma} \label{lemma:small-m}
 Let $\nu\geq t$ and $n\geq k_1+\nu+2$. Suppose that
 $\mathcal{F}_1\subseteq{V\brack k_1},\ldots,
 \mathcal{F}_r\subseteq{V\brack k_r}$
 are $r$-cross $t$-intersecting families and satisfy
 $m\leq\nu$, then
    \begin{equation*}
    \prod_{ h =1}^{r}|\mathcal{F}_{h}| \leq \prod_{ h =1}^{r}{ n-t \brack k_{h}-t },
    \end{equation*}
equality holds if and only if $\mathcal{F}_{h}=\{ F \in {V \brack k_{h}} : T \subseteq F\}\;(h =1,2,\dots,r)$ for some $T \in {V \brack t}$.
\end{Lemma}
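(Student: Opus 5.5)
The plan is to take $p=\nu$ and work with the truncated families $\mathcal{G}_i^{\nu}$. Write $\tau_i:=\tau_{t,i}^{\nu}=\tau_t(\mathcal{G}_i^{\nu})$. Since $m\leq\nu$, every $m_{i,j}\leq\nu$, so Lemma \ref{lem:3-A} applies to every ordered pair $(i,j)$ of distinct indices with $p=\nu$. In particular each $\mathcal{G}_i^{\nu}$ is non-empty (part (i), applied with the roles of $i$ and $j$ exchanged), and $t\leq\tau_i\leq k_i$ (part (ii)). Hence all the relevant quantities are well defined.

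\textbf{Bounding each family.} Fix $i$ and let $j=i+1$, indices taken cyclically modulo $r$. The family $\mathcal{G}_i^{\nu}$ is a collection of $t$-covers of $\mathcal{F}_i$, each of dimension at most $\nu$. We have $n\geq k_1+\nu+2\geq k_i+\nu-t+1$. Moreover, Lemma \ref{lem:3-A}(iv) gives $\tau_t(\mathcal{F}_i)\leq\tau_{i+1}$. Proposition \ref{proposition:2.4}(i) with $\ell=\nu$ then yields
\[
|\mathcal{F}_i|\leq f(\tau_t(\mathcal{F}_i),\tau_i,k_i,\nu,t)\leq {\tau_{i+1}\brack t}{\nu-t+1\brack 1}^{\tau_i-t}{n-\tau_i\brack k_i-\tau_i}.
\]

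\textbf{Cyclic matching.} Multiplying these bounds over $i\in[r]$, the factors ${\tau_{i+1}\brack t}$ run over all indices exactly once. Re-indexing them therefore turns the product into
\[
\prod_{i=1}^r|\mathcal{F}_i|\leq\prod_{i=1}^r f(\tau_i,\tau_i,k_i,\nu,t).
\]
This is the ``diagonal'' reduction described in the introduction. Now apply Lemma \ref{lem:8.4} with $a=k_i$, $b=\nu$, $y=t$. Its hypothesis $n\geq k_i+\nu+2$ covers both $q\geq3$ and $q=2$. The lemma gives $f(\tau_i,\tau_i,k_i,\nu,t)\leq{n-t\brack k_i-t}$, with equality only if $\tau_i=t$. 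This proves the inequality.

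\textbf{Equality case.} This is where I expect most of the care to be needed. Equality forces $\tau_i=t$ for every $i$, and also equality in each individual bound on $|\mathcal{F}_i|$ (since all the factors are positive). Since $\tau_{i+1}=t$, there is a $t$-subspace $T_{i+1}$ which is a $t$-cover of $\mathcal{G}_{i+1}^{\nu}$. By Lemma \ref{lem:3-A}(iii), $T_{i+1}$ is then a $t$-cover of $\mathcal{F}_i$, i.e. every member of $\mathcal{F}_i$ contains $T_{i+1}$. Because $|\mathcal{F}_i|={n-t\brack k_i-t}$, the family $\mathcal{F}_i$ must be the full star of $k_i$-subspaces through $T_{i+1}$.

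It remains to show that all the $T_i$ coincide. Suppose they do not, say $T_a\neq T_b$. Using $n\geq k_1+\nu+2$, which is much larger than each $k_h$, I would choose members $F_h\supseteq T_{h+1}$ in general position outside these $t$-spaces. Then $\bigcap_h F_h$ has dimension at most $\dim\bigl(\bigcap_h T_{h+1}\bigr)<t$, contradicting the $r$-cross $t$-intersecting property. Carrying out this general-position construction inside $V$ is the main technical obstacle, but it is routine given the dimension slack. Conversely, the common star clearly attains equality.
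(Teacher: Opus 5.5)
Your proof of the inequality is the paper's argument: the cyclic pairing $(h,h+1)$, Proposition~\ref{proposition:2.4}(i) together with Lemma~\ref{lem:3-A}(iv), the telescoping to the diagonal product, and Lemma~\ref{lem:8.4}. Your first steps in the equality case are also correct: equality forces $\tau_i=t$, and then $\mathcal{F}_i=\{F\in{V\brack k_i}:T_{i+1}\subseteq F\}$.

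\textbf{The gap.} The problem is the last step, where you show the $T_i$ coincide. The general-position construction is not backed by the slack you cite. The bound $n\geq k_1+\nu+2$ says only that $n$ exceeds each single $k_h$, but an $r$-fold intersection needs far more room. For example, any three $k$-subspaces of an $n$-space meet in dimension at least $3k-2n$, which equals $k-2\nu-4$ when $n=k+\nu+2$. The lemma allows $\nu=t$, so for large $k$ every choice of members has $\dim\bigcap_h F_h\geq t$, and dimension counting cannot produce the contradiction. Such regimes are excluded only by the hypothesis $m\leq\nu$, which you never use at this point. To make a general-position argument work you would have to extract the needed room from $m\leq\nu$ and also control the fixed subspaces $T_{h+1}$. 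That is not routine, and you give no indication of how to do it.

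\textbf{The fix.} It is immediate, and it is what the paper does. Since $m\leq\nu$, Lemma~\ref{lem:3-A}(iii) holds for \emph{every} ordered pair of distinct indices. So a fixed $T_\ell\in\mathcal{T}_t(\mathcal{G}_\ell^{\nu})$ lies in every member of $\mathcal{F}_h$ for all $h\neq\ell$, not only for $h=\ell-1$.
\begin{itemize}
\item The full star of $T_{h+1}$ is therefore contained in the star of $T_\ell$.
\item Since $n>k_h$, the members of the star of $T_{h+1}$ intersect exactly in $T_{h+1}$. Hence $T_\ell\subseteq T_{h+1}$, so $T_\ell=T_{h+1}$ for every $\ell\neq h$.
\item Given distinct $\ell,\ell'$, choose $h\notin\{\ell,\ell'\}$; this is where $r\geq 3$ is used. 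Then $T_\ell=T_{h+1}=T_{\ell'}$.
\end{itemize}
The paper packages this by multiplying the containments $\mathcal{F}_h\subseteq\{F\in{V\brack k_h}:T_\ell\subseteq F\}$ over all pairs $h\neq\ell$. This gives $\bigl(\prod_h|\mathcal{F}_h|\bigr)^{r-1}\leq\bigl(\prod_h{n-t\brack k_h-t}\bigr)^{r-1}$, which forces each $\mathcal{F}_h$ to be the star of $T_\ell$ for every $\ell\neq h$.
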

\begin{proof}
Let 
$$(h,\ell) \in \{(1,2),(2,3),\dots,(r-1,r),(r,1)\}.$$
Note that $\mathcal{G}^{\nu}_{h}$ is a collection of $t$-covers of $\mathcal{F}_{h}$. Then by Lemma \ref{lem:3-A} $\rm (iv)$ and Proposition \ref{proposition:2.4} $\rm (i)$, 
\begin{align*}
    |\mathcal{F}_{h}| \leq f(\tau_{t,\ell}^{\nu},\tau_{t,h}^{\nu},k_{h},\nu,t).
\end{align*}
    Therefore, by (\ref{eq:f}), we have
\begin{align*}
    \prod_{h=1}^{r}|\mathcal{F}_{h}| \leq \prod_{h=1}^{r}f(\tau_{t,h+1}^{\nu},\tau_{t,h}^{\nu},k_{h},\nu,t)=\prod_{h=1}^{r}f(\tau_{t,h}^{\nu},\tau_{t,h}^{\nu},k_{h},\nu,t),
\end{align*}
where $\tau_{t,r+1}^{\nu} = \tau_{t,1}^{\nu}$. This together with Lemma \ref{lem:8.4}  yields 
\begin{align*} 
    \prod_{ h =1}^{r}|\mathcal{F}_{h}| \leq \prod_{ h =1}^{r}{ n-t \brack k_{h}-t },
\end{align*}
and equality can hold only if $\tau_{t,1}^{\nu}=\tau_{t,2}^{\nu}=\dots =\tau_{t,r}^{\nu} =t$.

It remains to characterize equality. Suppose equality holds. Let $h,\ell \in [r]$ be distinct. By Lemma \ref{lem:3-A} $\rm (iii)$, there exists
$T_{\ell} \in \mathcal{T}_{t}(\mathcal{G}_{\ell}^{\nu})$ such that $
    \mathcal{F}_{h} \subseteq \left\{ F \in {V \brack k_{h}} : T_{\ell} \subseteq F\right\}$. Therefore, 
\begin{align*}
    \left( \prod_{h=1}^{r}|\mathcal{F}_{h}|\right)^{r-1}  \leq \prod_{\ell =1}^{r}\prod_{h\neq \ell}\left|\left\{ F \in {V \brack k_{h}} : T_{\ell} \subseteq F\right\}\right|= \left(\prod_{h=1}^{r}{ n-t \brack k_{h}-t} \right)^{r-1},
\end{align*}
which implies that $
    \mathcal{F}_{h}= \left\{ F \in {V \brack k_{h}} : T_{\ell} \subseteq F\right\}$ for any two distinct $ h,\ell \in [r] $. Then we get  $T_{1}=T_{2}=\dots=T_{r}$ from $r \geq 3$, which finishes the proof.
\end{proof}

\subsection{Proofs of Theorems \ref{theorem:1} and \ref{theorem:3}} \label{section 3.3}
\noindent{\bf Proof of Theorem \ref{theorem:1}.}\;
Put
$$
\nu:=\left\lceil\frac{k_1+t}{r-1}\right\rceil+t-1.
$$
Since $n$ is an integer, the assumption
$n\geq\frac{r}{r-1}(k_1+t)+1$ implies
$$
n\geq
k_1+t+1+\left\lceil\frac{k_1+t}{r-1}\right\rceil
=k_1+\nu+2.
$$
Moreover, $
(r-1)(\nu-t+1)\geq k_1+t$. If $m\geq\nu+1$, then Lemma \ref{lemma:large-m} gives the strict inequality $
\prod_{h=1}^r|\mathcal{F}_h|<\prod_{h=1}^r{n-t\brack k_h-t}$. If $m\leq\nu$, then Lemma \ref{lemma:small-m} gives $
\prod_{h=1}^r|\mathcal{F}_h|\leq\prod_{h=1}^r{n-t\brack k_h-t}$, 
with equality if and only if there exists $T\in{V\brack t}$ with
$$
\mathcal{F}_h=
\left\{F\in{V\brack k_h}:T\subseteq F\right\},\;h=1,2,\ldots,r.
$$
This completes the proof.
\hfill$\square$ \vspace{1em}

\noindent {\bf Proof of Theorem \ref{theorem:3}.}\;If \(k=t\), the result is immediate. Hence assume that \(k\ge t+1\). Let $ p = \left \lceil \frac{k-t}{r-1} \right\rceil+t $, 
$$\mathcal{G}=\{\cap_{i=1}^{r-2}F_{i}: F_{1}, F_{2}, \dots, F_{r-2} \in \mathcal{F}\},$$ 
and
$$
\mathcal{G}^\prime=\{F \cap G: F\in\mathcal{F},\;G \in \mathcal{G},\;\dim(F \cap G) \leq p-1\}.$$ It is obvious that $p \geq t+1$. By the assumption on $n$, we have $ n \geq k+p+1$. We divide
the proof into the following two cases. 

\noindent\textbf{Case 1.} There exists $F_{0} \in \mathcal{F}$ such that $F_{0} \cap G \notin \mathcal{G}^\prime$ for each  $G \in \mathcal{G}$. 

Let $m_{0} =\min \{ \dim(F_{0}\cap G): G \in \mathcal{G}\}$, and pick $F_{1}^\prime, F_{2}^\prime, \dots, F_{r-2}^\prime \in \mathcal{F}$ such that $$\dim(F_{0} \cap (\cap_{i=1}^{r-2}F_{i}^\prime)) = m_{0}.$$ Note that the hypothesis of this case gives \(m_0 \ge p\). By Lemma \ref{lem:2.7} and the pigeonhole principle, there exists $j \in [r-2]$ such that 
$$\dim(F_{0} \cap (\cap_{i \in [r-2] \setminus \{ j\}}F_{i}^\prime)) \leq \frac{k+(r-3)m_{0}}{r-2}.$$ By the definitions of $m_{0}$ and $\mathcal{G}$, we have 
$$\dim(F_{0}\cap\left(F \cap (\cap_{i \in [r-2] \setminus \{ j\}}F_{i}^\prime)\right)) \geq m_{0}$$ for all $F \in \mathcal{F}$, and so the subspace $F_{0} \cap (\cap_{i \in [r-2] \setminus \{ j\}}F_{i}^\prime)$ is an $m_{0}$-cover of $\mathcal{F}$. By Lemmas \ref{lem:2.6} and \ref{lem:8.1} $\rm(iii)$-$\rm(iv)$, we have
\begin{align*}
    |\mathcal{F}| &\leq {\lfloor \frac{k+(r-3)m_{0}}{r-2}\rfloor \brack m_{0}}{n-m_{0} \brack k-m_{0}}< q^{(m_{0}+1)\left  \lfloor \frac{k-m_{0}}{r-2}\right  \rfloor-(m_{0}-t)(n-k)} {n-t \brack k-t}.
\end{align*}
To prove the theorem, we only need to show that
\begin{align*}
    Q:=(m_{0}+1) \left  \lfloor \frac{k-m_{0}}{r-2}\right  \rfloor-(n-k)(m_{0}-t)\leq 0.
\end{align*}

Since $m_{0} \geq p \geq \frac{k-t}{r-1}+t$ and $n \geq \frac{rk+(r-2)t+(r-1)}{r-1}$, it is easy to check that $m_{0}-t \geq \left  \lfloor \frac{k-m_{0}}{r-2}\right  \rfloor $ and $n \geq k+t+1+\left  \lfloor \frac{k-m_{0}}{r-2}\right  \rfloor $. Hence
\begin{align*}
    Q
    &=(t+1)\left  \lfloor \frac{k-m_{0}}{r-2}\right  \rfloor-(m_{0}-t)\left(n-k-\left  \lfloor \frac{k-m_{0}}{r-2}\right  \rfloor\right)\\
    &\leq -(m_{0}-t)\left(n-k-t-1-\left  \lfloor \frac{k-m_{0}}{r-2}\right  \rfloor\right) \leq 0.
\end{align*}

\noindent\textbf{Case 2.} For each $F \in \mathcal{F}$, there exists $G \in \mathcal{G}$ such that $F \cap G \in \mathcal{G}^\prime$. 

Let $T$ be a $t$-cover of $\mathcal{G}^\prime$. For each $F \in \mathcal{F}$, there exists $G \in \mathcal{G}$ such that $F \cap G \in \mathcal{G}^\prime$, implying that $\dim(T \cap F) \geq \dim(T\cap F \cap G) \geq t$. Therefore, by the definition of $t$-cover, we have $\tau_t(\mathcal{F}) \leq \tau_t(\mathcal{G}^\prime)$. Observe that $\mathcal{G}^\prime$ is a collection of $t$-covers of $\mathcal{F}$. Then we derive from Proposition \ref{proposition:2.4} $\rm (i)$ that 
\begin{align*}
    |\mathcal{F}| \leq f(\tau_t(\mathcal{F}),\tau_t(\mathcal{G}^\prime),k,p-1,t) \leq f(\tau_t(\mathcal{G}^\prime),\tau_t(\mathcal{G}^\prime),k,p-1,t) \leq {n-t \brack k-t},
\end{align*}
where the second and the third inequalities follow from $\tau_t(\mathcal{F}) \leq \tau_t(\mathcal{G}^\prime)$ and Lemma \ref{lem:8.4}, respectively. Moreover, $|\mathcal{F}|={n-t \brack k-t}$ equality holds if and only if $\tau_t(\mathcal{G}^\prime) = t$, which is equivalent to $\mathcal{F}$ being trivial. \qed

\section{Non-trivial $r$-cross $t$-intersecting families} \label{section 4}
In this section, we use the techniques developed in the preceding sections to prove Theorem \ref{theorem:4}. For convenience, throughout this section, we suppose $r \geq 3$ and $k_{1} \geq k_{2} \geq \dots \geq k_{r} \geq t+1$, and put 
\begin{equation}\label{equdefnu'}
    \nu^\prime :=  \left \lceil \frac{k_{1}+t}{r-1}\right \rceil+t.
\end{equation}
Let $\mathcal{F}_{1} \subseteq { V \brack k_{1}}, \mathcal{F}_{2} \subseteq { V \brack k_{2}},\dots,  \mathcal{F}_{r} \subseteq { V \brack k_{r}}$ be $r$-cross $t$-intersecting. The families are \emph{maximal} if for any $r$-cross $t$-intersecting families $\mathcal{H}_{1} \subseteq { V \brack k_{1}}, \mathcal{H}_{2} \subseteq { V \brack k_{2}},\dots,  \mathcal{H}_{r} \subseteq { V \brack k_{r}}$, we have $\mathcal{F}_i\subseteq\mathcal{H}_i,\;i=1,2,\ldots,r$ implies $\mathcal{F}_i=\mathcal{H}_i,\;i=1,2,\ldots,r$.

\begin{Lemma}\label{lem:10.10}
    Let $n\geq k_1+\nu'+4$. Assume that $\mathcal{F}_{1} \subseteq { V \brack k_{1}}, \mathcal{F}_{2} \subseteq { V \brack k_{2}}, \dots,  \mathcal{F}_{r} \subseteq { V \brack k_{r}}$ are $r$-cross $t$-intersecting families with $m \geq  \nu^\prime+1$. Then 
    \begin{align*}
    \prod_{ h = 1}^{r}|\mathcal{F}_{h}| 
    <& g(k_{r},t+1)\prod_{ h = 1}^{r-1}{ n-t-1 \brack k_{h}-t-1 }.
\end{align*}
\end{Lemma}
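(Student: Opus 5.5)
We follow the proof of Lemma \ref{lemma:large-m} almost verbatim, with $\nu'$ in place of $\nu$, and then compare the resulting bound with the Hilton--Milner type quantity rather than with $\prod_h{n-t\brack k_h-t}$. Fix distinct $i,j$ with $m_{i,j}=m\geq\nu'+1$. Lemma \ref{lemma:joint-product} gives
$$\prod_{h\neq i}|\mathcal{F}_h|<q^{(m-t+1)t+(m+1)(k_1-m)-(r-2)(m-t)(n-k_1)}\prod_{h\neq i}{n-t\brack k_h-t},$$
exactly as in (\ref{eq:3.2.2.3}). Since $m_{j,i}\leq m$, the argument leading to (\ref{eq:3.2.2.2}) gives $|\mathcal{F}_i|\leq q^{(m-t+1)t}{n-t\brack k_i-t}$. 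Multiplying, we get
$$\prod_{h=1}^r|\mathcal{F}_h|<q^{E'}\prod_{h=1}^r{n-t\brack k_h-t},\qquad E'=2t(m-t+1)+(m+1)(k_1-m)-(r-2)(m-t)(n-k_1).$$

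Next we convert the target into the same shape. By Lemma \ref{lem:8.3}, $g(k_r,t+1)>q^t{n-t\brack k_r-t}$. By Lemma \ref{lem:8.1} (iii), ${n-t-1\brack k_h-t-1}>q^{k_h-n}{n-t\brack k_h-t}\geq q^{k_r-n}{n-t\brack k_h-t}$; we use the crude form ${n-t-1\brack k_h-t-1}>q^{-(n-k_h)-1}{n-t\brack k_h-t}\geq q^{-(n-k_r)-1}{n-t\brack k_h-t}$, though the sharper version $q^{-(n-k_h)}$ should suffice. This yields
$$g(k_r,t+1)\prod_{h=1}^{r-1}{n-t-1\brack k_h-t-1}>q^{\,t-\sum_{h<r}(n-k_h)-(r-1)}\prod_{h=1}^r{n-t\brack k_h-t}.$$
Hence it suffices to prove
$$E'\leq t-(r-1)(n-k_r)-(r-1).$$
Here the loss $(r-1)(n-k_r)$ is worrying because $k_r$ may be much smaller than $k_1$, while the gain in $E'$ is only $(r-2)(m-t)(n-k_1)$. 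So the comparison must be done more carefully, family by family. Note that the bound from Lemma \ref{lemma:joint-product} actually has ${n-m+1\brack k_h-m+1}$, and each such factor is smaller than ${n-t-1\brack k_h-t-1}$ by a factor of about $q^{-(m-t-2)(n-k_h)}$. Similarly, the bounds on $|\mathcal F_i|$ and $|\mathcal F_j|$ carry ${n-t\brack k-t}$, which exceeds ${n-t-1\brack k-t-1}$ by about $q^{n-k}$.

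So the refined plan is to keep the factors per index: for $h\neq i,j$ use ${n-m+1\brack k_h-m+1}<q^{-(m-t-2)(n-k_h)}{n-t-1\brack k_h-t-1}$. For the two indices $i,j$, we must match them against $g(k_r,t+1)$ and one factor ${n-t-1\brack\cdot}$. We bound ${n-t\brack k_i-t}{n-t\brack k_j-t}$ by $q^{(n-k_r)+O(1)}$ times $g(k_r,t+1){n-t-1\brack k_{i'}-t-1}$, where $i'$ is the index among $i,j$ that is not matched with $r$; Lemma \ref{lem:8.10} lets us move the $g$-factor to index $r$ at no cost. The surplus $n-k_h$ from the single excess factor must then be absorbed by the $(r-2)$ saved factors $q^{-(m-t-2)(n-k_h)}$, using $m-t-2\geq\nu'-t-1\geq (k_1+t)/(r-1)-1$.

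The main obstacle is the exponent bookkeeping. We need
$$2t(m-t+1)+(m+1)(k_1-m)+(n-k_{\min})+O(1)\leq (m-t-2)\sum_{h\neq i,j}(n-k_h).$$
Using $n-k_h\geq n-k_1\geq\nu'+4$ and $(r-1)(\nu'-t)\geq k_1+t$, we first check monotonicity in $m$, as in Lemma \ref{lemma:large-m}. Then we substitute $m=\nu'+1$. The extra $+2$ in $n\geq k_1+\nu'+4$ relative to Lemma \ref{lemma:large-m} is exactly what should pay for the term $n-k_{\min}$ and the $-2$ shift. If the case $k_r\ll k_1$ resists this estimate, the fallback is to handle $n-k_r$ by noting that the $j$-factor bound can be taken with $k_j$ itself and then pairing index $r$ appropriately.
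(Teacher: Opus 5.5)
Your strategy is the paper's: Lemma~\ref{lemma:joint-product} for $\prod_{h\neq i}|\mathcal{F}_h|$, the bound $|\mathcal{F}_i|\le{m\brack t}{n-t\brack k_i-t}$ from $m_{j,i}\le m$, and Lemma~\ref{lem:8.3} to replace ${n-t\brack k_i-t}$ by $q^{-t}g(k_i,t+1)$. Then Lemma~\ref{lem:8.1}(iii) converts the single factor ${n-t\brack k_j-t}$ (cost $q^{n-k_j+1}$) and the $r-2$ factors ${n-m+1\brack k_h-m+1}$ (gain $q^{-(m-t-2)(n-k_h)}$). Finally Lemma~\ref{lem:8.10} moves $g$ to index $r$.

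\textbf{The gap.} The real content of the proof is the exponent check you leave as ``the main obstacle'', and the inequality you state as sufficient is false. You dropped the term $-(r-2)(n-k_1)$, which is already part of the exponent $E$ in Lemma~\ref{lemma:joint-product} (it comes from Lemma~\ref{lem:10.1}). Concretely, take $r=3$, $t=1$, $k_1=k_2=5$, $k_3=2$, so $\nu'=4$, together with $m=m_{2,3}=5$, $j=3$ and $n=k_1+\nu'+4=13$. Your left-hand side is $10+0+11=21$ (your $O(1)$ is exactly $1-t=0$ here), while your right-hand side is $(m-t-2)(n-k_1)=16$. The true margin in this case is only $3$. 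So neither an unspecified $O(1)$ nor the heuristic that the extra $+2$ in $n$ ``should pay'' can replace an exact computation.

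\textbf{The fix.} Keep every term. Use $n-k_h\ge n-k_1$ for $h\ne i,j$, and $n-k_j+1\le n-t$ (since $k_j\ge t+1$). The total exponent is then exactly
\[
E(m,n)=n-2t+2t(m-t)+2\min\{t,m-t\}+(m+1)(k_1-m)-(r-2)(m-t-1)(n-k_1).
\]
\begin{itemize}
\item The coefficient of $n$ is $1-(r-2)(m-t-1)<0$, because $(r-1)(\nu'-t)\ge k_1+t$ gives $(r-2)(\nu'-t)>1$.
\item $E$ is decreasing in $m\in\{\nu'+1,\dots,k_1\}$; the case $\nu'\ge k_1$ is vacuous.
\item Substituting $m=\nu'+1$ and $n-k_1=\nu'+4$, and using $k_1-t-(r-1)(\nu'-t)\le-2t$, gives $E\le-2t^2-3t+2-(r-3)(\nu'-t)<0$.
\end{itemize}
In particular, your worry about $k_r\ll k_1$ and the proposed fallback are unnecessary. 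The potentially large quantity $n-k_j$ is paid only once, is bounded by $n-t$, and is absorbed by the decay $(r-2)(m-t-1)(n-k_1)$.
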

\begin{proof}
Let $i,j \in [r] $ be two distinct indices with $m_{i,j} = m$. By Lemma \ref{lemma:joint-product}, we have 
\begin{align} \label{eq:4.2.2.3}
    \prod_{ h \neq i }|\mathcal{F}_{h}|
    &<  q^{(m-t)t+\min\{t,m-t\}+(m+1)(k_{1}-m)-(r-2)(n-k_{1})} { n-t \brack k_{j}-t }\prod_{ h \neq i,j}{ n-m+1 \brack k_{h}-m+1 } \nonumber \\
    &<  q^{n-k_{j}+1+(m-t)t+\min\{t,m-t\}+(m+1)(k_{1}-m)-(r-2)(m-t-1)(n-k_{1})}\prod_{ h \neq i}{ n-t-1 \brack k_{h}-t-1 } \nonumber \\
    &\leq  q^{n-t+(m-t)t+\min\{t,m-t\}+(m+1)(k_{1}-m)-(r-2)(m-t-1)(n-k_{1})}\prod_{ h \neq i}{ n-t-1 \brack k_{h}-t-1 },
\end{align}
where in the second and the third steps we used Lemma \ref{lem:8.1} $(\rm iii)$ and $k_{j} \geq t+1$, respectively.

On the other hand, since $m_{j,i} \leq m$, there exist $F_{j} \in \mathcal{F}_{j}$ and $G \in \mathcal{G}_{j,i}$ such that $\dim( F_{j} \cap G ) \leq m$. Observe that $F_{j} \cap G$ is a $t$-cover of $\mathcal{F}_{i}$. By Lemmas \ref{lem:2.6}, \ref{lem:8.1} $(\rm iv)$ and \ref{lem:8.3}, we have 
\begin{align} \label{eq:4.2.2.2}
    |\mathcal{F}_{i}| \leq {m \brack t} {n-t \brack k_{i}-t} \leq q^{(m-t)t+\min\{t,m-t\}}{n-t \brack k_{i}-t}<q^{-t+(m-t)t+\min\{t,m-t\}}g(k_{i},t+1).
\end{align}
 
Combining (\ref{eq:4.2.2.3}) and (\ref{eq:4.2.2.2}) and applying Lemma \ref{lem:8.10}, we obtain
\begin{align*} 
    \prod_{ h = 1}^{r}|\mathcal{F}_{h}| 
    < q^{E(m,n)}\cdot g(k_{r},t+1)\prod_{ h = 1}^{r-1}{ n-t-1 \brack k_{h}-t-1 },
\end{align*}
where 
\begin{align*}
E(m,n):=&n-2t+2t(m-t)+2\min\{t,m-t\}\\
&+(m+1)(k_1-m)
 -(r-2)(m-t-1)(n-k_1).
\end{align*}
To prove the lemma, we only need to show that $E(m,n)\leq0$. If $\nu'\geq k_1$, then the condition $m\geq\nu'+1$ is
impossible. Hence we may assume that $\nu'\leq k_1-1$.
By the definition of $\nu'$, we have $
(r-1)(\nu'-t)\geq k_1+t$. Moreover, since $k_1\geq t+1$, we have $
(r-2)(\nu'-t)
\geq\frac{r-2}{r-1}(k_1+t)>1$. Thus $E(m,n)$ is decreasing in $n$. Next, it is routine to check that $E(m,n)$ is decreasing on $m\in\{\nu'+1,\ldots, k_1\}$. Consequently, it suffices to take \(m=\nu'+1\) and \(n-k_1=\nu'+4\). Substituting these values and collecting terms, we obtain
\[
\begin{aligned}
E(m,n)\leq{}&
(\nu'+3)\bigl(k_1-t-(r-1)(\nu'-t)\bigr)
+2t(\nu'-t)\\
&+2\min\{t,\nu'-t+1\}+t+2
-(r-3)(\nu'-t).
\end{aligned}
\]
By the definition of \(\nu'\) in (\ref{equdefnu'}), we obtain $
k_1-t-(r-1)(\nu'-t)\leq-2t$. This together with \(\min\{t,\nu'-t+1\}\leq t\) yields 
\[
\begin{aligned}
E(m,n)
&\leq-2t(\nu'+3)+2t(\nu'-t)+3t+2
-(r-3)(\nu'-t)\\
&=-2t^2-3t+2-(r-3)(\nu'-t)<0,
\end{aligned}
\]
where the last inequality follows from \(r\geq3\) and \(t\geq1\).
\end{proof}
\begin{Lemma}\label{lem:10.17}
    Let $n\geq k_1+\nu'+4$. Assume that $\mathcal{F}_{1} \subseteq { V \brack k_{1}}, \mathcal{F}_{2} \subseteq { V \brack k_{2}}, \dots,  \mathcal{F}_{r} \subseteq { V \brack k_{r}}$ are $r$-cross $t$-intersecting families satisfying $m \leq  \nu^\prime$. If there exist two distinct $ i,j \in [r]$ such that
\begin{align} \label{eq: t-3-1}
    \prod_{h \neq i,j }|\mathcal{F}_{h}| &\leq q^{r+1+(\nu^\prime+2)(k_{1}-\nu^\prime-1)-(r-2)(\nu^\prime-t)(n-k_{1})} \prod_{h \neq i,j }{n-t-1 \brack k_{h}-t-1},
\end{align}
then 
    \begin{align*}
    \prod_{ h = 1}^{r}|\mathcal{F}_{h}| 
    <& g(k_{r},t+1)\prod_{ h = 1}^{r-1}{ n-t-1 \brack k_{h}-t-1 }.
\end{align*}
\end{Lemma}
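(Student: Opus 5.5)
Since the remaining $r-2$ families are already controlled by (\ref{eq: t-3-1}), the plan is to bound $|\mathcal{F}_i|$ and $|\mathcal{F}_j|$ separately by $t$-cover estimates that lose only a factor $q^{O(t\nu')}$ relative to ${n-t\brack k-t}$. Then we compare everything with ${n-t-1\brack k_h-t-1}$, losing a factor of about $q^{n-k_h}$ for each of these two families. The exponent gain $-(r-2)(\nu'-t)(n-k_1)$ in (\ref{eq: t-3-1}) is large and should absorb these losses.

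\emph{Bounding $|\mathcal{F}_i|$ and $|\mathcal{F}_j|$.} Since $m\le\nu'$, we have $m_{j,i}\le\nu'$. So there exist $F_j\in\mathcal{F}_j$ and $G\in\mathcal{G}_{j,i}$ with $\dim(F_j\cap G)\le\nu'$, and $F_j\cap G$ is a $t$-cover of $\mathcal{F}_i$. Lemmas \ref{lem:2.6} and \ref{lem:8.1}(iv) then give
\[
|\mathcal{F}_i|\le{\nu'\brack t}{n-t\brack k_i-t}\le q^{t(\nu'-t)+\min\{t,\nu'-t\}}{n-t\brack k_i-t}.
\]
The symmetric argument with $m_{i,j}\le\nu'$ bounds $|\mathcal{F}_j|$ in the same way. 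Next we convert with Lemma \ref{lem:8.1}(iii), using ${n-t\brack k-t}<q^{n-k+1}{n-t-1\brack k-t-1}$. For one of the two indices, say the one not equal to $r$ (or with the larger $k$), we keep the ${n-t-1\brack k-t-1}$ form. For the other we pass to $g(\cdot,t+1)$ via Lemma \ref{lem:8.3}, namely ${n-t\brack k-t}<q^{-t}g(k,t+1)$. Finally, Lemma \ref{lem:8.10} replaces $g(k_a,t+1)\prod_{h\ne a}{n-t-1\brack k_h-t-1}$ by the $a=r$ version, which is the maximum.

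\emph{Exponent check.} Multiplying the three estimates gives $\prod_h|\mathcal{F}_h|<q^{E}\,g(k_r,t+1)\prod_{h<r}{n-t-1\brack k_h-t-1}$ with
\[
E\le 2t(\nu'-t)+2t-t+(n-k_{\min}+1)+r+1+(\nu'+2)(k_1-\nu'-1)-(r-2)(\nu'-t)(n-k_1).
\]
It remains to show $E\le 0$. Here $n-k_{\min}$ is a nuisance: $k_{\min}$ could be as small as $t+1$, making $n-k$ as large as $n$. I would avoid it by choosing which family keeps the ${n-t-1\brack\cdot}$ form: if $i\neq r$, use the factor for $\mathcal{F}_i$ with $k_i$, and bound $n-k_i+1\le n-t$ as in Lemma \ref{lem:10.10}. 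The $n$-dependence is then $n-(r-2)(\nu'-t)(n-k_1)$. Because $(r-2)(\nu'-t)>1$, as noted in the proof of Lemma \ref{lem:10.10}, $E$ is decreasing in $n$, so it suffices to take $n=k_1+\nu'+4$.

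Substituting $n=k_1+\nu'+4$ and using $k_1-t-(r-1)(\nu'-t)\le -2t$ from the definition (\ref{equdefnu'}), I expect the dominant negative term to be about $-(\nu'+2)\cdot 2t$, plus an extra $-(r-2)(\nu'-t)\cdot 2$. This should beat $2t(\nu'-t)+t+k_1+r+O(1)$. The main obstacle is making this arithmetic close, in particular absorbing the stray $k_1$ coming from $n-t$ and the $r+1$ term. The $k_1$ is handled by rewriting $k_1\le (r-1)(\nu'-t)-t$ and absorbing it into $-(r-2)(\nu'-t)(\nu'+4)$. If the estimate comes out too tight for small $t$, I would instead use the sharper bound ${\nu'\brack t}$ without rounding.
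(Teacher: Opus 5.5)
Your proposal is correct, but it bounds $|\mathcal{F}_i||\mathcal{F}_j|$ by a cruder route than the paper.

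\textbf{The paper's route.} It works with the truncated families. $\mathcal{G}_i^{\nu'}$ and $\mathcal{G}_j^{\nu'}$ are collections of $t$-covers of $\mathcal{F}_i$ and $\mathcal{F}_j$ of dimension at most $\nu'$. Lemma~\ref{lem:3-A}~(iv) gives $\tau_t(\mathcal{F}_i)\le\tau^{\nu'}_{t,j}$ and $\tau_t(\mathcal{F}_j)\le\tau^{\nu'}_{t,i}$. Applying Proposition~\ref{proposition:2.4}~(i) crosswise and multiplying, the two covering numbers match up. Lemma~\ref{lem:8.4} then yields the loss-free bound $|\mathcal{F}_i||\mathcal{F}_j|\le{n-t\brack k_i-t}{n-t\brack k_j-t}$.

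\textbf{Your route.} You take a single $t$-cover $F_j\cap G$ (resp.\ $F_i\cap G$) of dimension at most $\nu'$, obtained from $m_{j,i},m_{i,j}\le\nu'$. You then apply Lemma~\ref{lem:2.6}, exactly as in (\ref{eq:4.2.2.2}). This costs an extra factor $q^{2t(\nu'-t)+2\min\{t,\nu'-t\}}\le q^{2t(\nu'-t+1)}$. Everything afterwards coincides with the paper: Lemma~\ref{lem:8.1}~(iii) with $n-k+1\le n-t$, then Lemma~\ref{lem:8.3}, then Lemma~\ref{lem:8.10}. Which of the two families you send to $g(\cdot,t+1)$ is immaterial, because of Lemma~\ref{lem:8.10}.

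\textbf{The final arithmetic.} You left this as an expectation, but it does close, so no fallback is needed. Your exponent is at most the paper's $Q$ plus $2t(\nu'-t+1)$. The paper's computation at $n=k_1+\nu'+4$, together with $k_1-t-(r-1)(\nu'-t)\le -2t$, gives
\[
E\le -2t(\nu'+3)+2t(\nu'-t+1)+r+3-t-(r-3)(\nu'-t)=-2t^2-5t+r+3-(r-3)(\nu'-t)\le 6-5t-2t^2<0 .
\]

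\textbf{What each approach buys.} Your route is more elementary. It needs neither the truncated families with their covering numbers $\tau^{\nu'}_{t,h}$, nor Lemma~\ref{lem:8.4} with its condition on $n$. However, it leaves very little slack: for $t=1$, $r=3$, $k_1=3$, $n=10$ your bound is exactly $E\le -1$. The paper's sharper product bound leaves exponent at most $-2t(\nu'+3)+6-t$.
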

\begin{proof}
    Observe that $\mathcal{G}^{\nu^\prime}_{i}$ is a collection of $t$-covers of $\mathcal{F}_{i}$, and $\mathcal{G}^{\nu^\prime}_{j}$ is a collection of $t$-covers of $\mathcal{F}_{j}$. By Lemma \ref{lem:3-A} $\rm (iv)$, Proposition \ref{proposition:2.4} $\rm (i)$ and (\ref{eq:f}), we have
\begin{align*}
|\mathcal{F}_{i}||\mathcal{F}_{j}| &\leq f(\tau_{t,j}^{\nu^\prime},\tau_{t,i}^{\nu^\prime},k_{i},\nu^\prime,t)f(\tau_{t,i}^{\nu^\prime},\tau_{t,j}^{\nu^\prime},k_{j},\nu^\prime,t)
=f(\tau_{t,i}^{\nu^\prime},\tau_{t,i}^{\nu^\prime},k_{i},\nu^\prime,t)f(\tau_{t,j}^{\nu^\prime},\tau_{t,j}^{\nu^\prime},k_{j},\nu^\prime,t).
\end{align*}
It follows from Lemma \ref{lem:8.4} that
\begin{align*}
    |\mathcal{F}_{i}||\mathcal{F}_{j}| \leq { n-t \brack k_{i}-t}{ n-t \brack k_{j}-t}.
\end{align*}
By combining this with (\ref{eq: t-3-1}), we obtain that
\begin{align*}
    \prod_{ h =1}^{r}|\mathcal{F}_{h}| \leq q^{r+1+(\nu^\prime+2)(k_{1}-\nu^\prime-1)-(r-2)(\nu^\prime-t)(n-k_{1})} { n-t \brack k_{i}-t}{ n-t \brack k_{j}-t}\prod_{h \neq i,j }{n-t-1 \brack k_{h}-t-1}.
\end{align*}
This together with Lemmas \ref{lem:8.1} $(\rm iii)$ and \ref{lem:8.3} yields
\begin{align*}
    \prod_{ h =1}^{r}|\mathcal{F}_{h}| &< q^{r+1-t+n-k_{j}+1+(\nu^\prime+2)(k_{1}-\nu^\prime-1)-(r-2)(\nu^\prime-t)(n-k_{1})} g(k_{i},t+1)\prod_{h \neq i }{n-t-1 \brack k_{h}-t-1}\\
    &\leq q^{r+1-2t+n+(\nu^\prime+2)(k_{1}-\nu^\prime-1)-(r-2)(\nu^\prime-t)(n-k_{1})} g(k_{r},t+1)\prod_{h =1}^{r-1}{n-t-1 \brack k_{h}-t-1},
\end{align*}
where the second inequality follows from $k_{j} \geq t+1$ and Lemma \ref{lem:8.10}. 

To prove the lemma, we need only to verify
\begin{align*}
    Q:=r+1-2t+n+(\nu^\prime+2)(k_{1}-\nu^\prime-1)-(r-2)(\nu^\prime-t)(n-k_{1})\leq 0.
\end{align*}
By the definition of $\nu'$, we have $(r-1)(\nu'-t)\geq k_1+t$, and so $(r-2)(\nu'-t)>1$, and hence $Q$ is
decreasing in $n$. Since $n-k_1\geq\nu'+4$, it follows that
\begin{align*}
Q
&\leq(\nu'+3)
 \big(k_1-t-(r-1)(\nu'-t)\big)\\
&\quad+(\nu'-t)+r+2-t
 -\big((r-2)(\nu'-t)-1\big)\\
&\leq-2t(\nu'+3)+r+3-t-(r-3)(\nu'-t)\leq-2t(\nu'+3)+6-t<0.
\end{align*}
This completes the proof.
\end{proof}

Let $\mathcal{F}_{1} \subseteq { V \brack k_{1}}, \mathcal{F}_{2} \subseteq { V \brack k_{2}}, \dots,  \mathcal{F}_{r} \subseteq { V \brack k_{r}}$ be $r$-cross $t$-intersecting families. We now turn to the situation in which $\mathcal{F}_{1},\mathcal{F}_{2}, \dots,\mathcal{F}_{r}$ fail to satisfy the hypotheses of Lemmas $\ref{lem:10.10}$ and $\ref{lem:10.17}$. For simplicity, we say that $\mathcal{F}_{1},\mathcal{F}_{2}, \dots,\mathcal{F}_{r}$ satisfy \emph{property $\mathcal{P}$} if $m \leq \nu^\prime$ and  
\begin{equation}\label{equpropertyp}
\prod_{h \neq i,j }|\mathcal{F}_{h}| > q^{r+1} \cdot q^{(\nu^\prime+2)(k_{1}-\nu^\prime-1)-(r-2)(\nu^\prime-t)(n-k_{1})} \prod_{h \neq i,j }{n-t-1 \brack k_{h}-t-1}
\end{equation}
for each pair of distinct $i,j \in [r]$. 

For a family $\mathcal{F} \subseteq {V \brack k} $ and two subspaces $ T_{1},T_{2} $ of $ V$, we have the following decomposition.
\begin{equation}\label{equdecomp}
	\mathcal{F}=\mathcal{F}^{<}(T_{1}, T_{2})\cup\mathcal{F}^{=}(T_{1}, T_{2})\cup\mathcal{F}^{>}(T_{1}, T_{2}),
\end{equation}
where
    \begin{align*}
        \mathcal{F}^{<}(T_{1}, T_{2})&=\left\{ F \in \mathcal{F}: T_{1}\cap T_{2} \nsubseteq F\cap T_{1}\right\};\\
        \mathcal{F}^{=}(T_{1}, T_{2})&=\left\{ F \in \mathcal{F}:T_{1} \cap T_{2}=F \cap T_{1}\right\};\\
        \mathcal{F}^{>}(T_{1}, T_{2})&=\left\{ F \in \mathcal{F}:T_{1} \cap T_{2} \subsetneqq F \cap T_{1}\right\}.
    \end{align*}

\begin{Lemma} \label{lem:10.2}
    Let $n \geq k_{1}+\nu^\prime-t+1$. Assume that $\mathcal{F}_{1} \subseteq { V \brack k_{1}}, \mathcal{F}_{2} \subseteq { V \brack k_{2}}, \dots,  \mathcal{F}_{r} \subseteq { V \brack k_{r}}$ are non-trivial $r$-cross $t$-intersecting families satisfying property $\mathcal{P}$. Suppose that there exists $z \in [r]$ with $\tau_{t,z}^{\nu^\prime} \geq t+1$. Let $T \in \mathcal{T}_{t}(\mathcal{G}_{z}^{\nu^\prime})$ and let $F$ be a subspace of $V$ such that $\dim(T\cap F)=t$. For each $i \in [r] \setminus \{ z\}$, the following hold:
    \begin{itemize}
    \item [$\rm (i)$] $|\mathcal{F}_{i}^{>}(T, F)| \leq \bar{f}(\tau_{t,z}^{\nu^\prime},\tau_{t,i}^{\nu^\prime},k_{i},\nu^\prime)$;
    \item [$\rm (ii)$] if there exist $j \in [r] \setminus \{ z, i\}$ and $ F_{j} \in \mathcal{F}_{j}$ with $T \cap F = T \cap F_{j} $, then $|\mathcal{F}_{i}^{<}(T,F)| < \frac{1}{q^{r+1}}|\mathcal{F}_{i}|$;
    \item [$\rm (iii)$] if there exist $j \in [r] \setminus \{ z,i\}$ and $F_{j} \in \mathcal{F}_{j}$ with $T \cap F \nsubseteq T \cap F_{j} $, then $|\mathcal{F}_{i}^{=}(T,F)| < \frac{1}{q^{r+1}}|\mathcal{F}_{i}|$.
\end{itemize}
\end{Lemma}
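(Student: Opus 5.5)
The plan is to treat (i) as a direct application of Proposition \ref{proposition:2.4}(ii), and to prove (ii) and (iii) together by a single contradiction argument based on property $\mathcal{P}$.

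Throughout, put $X:=T\cap F$, a $t$-subspace of $T$, and $s:=\tau_{t,z}^{\nu'}=\dim T\geq t+1$. Since $m\leq\nu'$, Lemma \ref{lem:3-A} applies with $p=\nu'$. Two consequences matter here: for $i\neq z$, $T$ is a $t$-cover of $\mathcal{F}_i$ (Lemma \ref{lem:3-A}(iii)), and $\mathcal{G}_i^{\nu'}$ is a nonempty collection of $t$-covers of $\mathcal{F}_i$, each of dimension at most $\nu'$ (Lemma \ref{lem:3-A}(i)).

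\emph{Part (i).} Every $F_i\in\mathcal{F}_i^{>}(T,F)$ satisfies $X\subsetneqq F_i\cap T$. Hence $\dim(F_i\cap T)\geq t+1$ and $X\subseteq F_i$. So $T\in{V\brack s}$ is a $(t+1)$-cover of $\mathcal{F}_i^{>}(T,F)$, and all its members contain the common $t$-subspace $X$ of $T$. The collection $\mathcal{G}_i^{\nu'}$ still consists of $t$-covers of this subfamily, and the hypothesis $n\geq k_i+\nu'-t+1$ holds. The second half of Proposition \ref{proposition:2.4}(ii) therefore gives $|\mathcal{F}_i^{>}(T,F)|\leq\bar f(s,\tau_{t,i}^{\nu'},k_i,\nu')$ directly.

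\emph{Parts (ii) and (iii): reduction to one statement.} In (ii) we have $F_j\cap T=X$. A member $F_i\in\mathcal{F}_i^{<}(T,F)$ does not contain $X$, so $\dim(F_i\cap F_j\cap T)=\dim(F_i\cap X)<t$. In (iii) we have $X\not\subseteq F_j$. A member $F_i\in\mathcal{F}_i^{=}(T,F)$ has $F_i\cap T=X$, so again $\dim(F_i\cap F_j\cap T)=\dim(X\cap F_j)<t$. Thus in both cases we have a subfamily $\mathcal{A}\subseteq\mathcal{F}_i$ and a fixed $F_j\in\mathcal{F}_j$ with $\dim(A\cap F_j\cap T)<t$ for all $A\in\mathcal{A}$. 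It suffices to show $|\mathcal{A}|<q^{-(r+1)}|\mathcal{F}_i|$.

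\emph{Key structural step.} Fix $A\in\mathcal{A}$ and arbitrary $F_h\in\mathcal{F}_h$ for $h\notin\{i,j,z\}$. The space $G=A\cap F_j\cap\bigcap_{h\neq i,j,z}F_h$ lies in $\mathcal{G}_z$. If $\dim G\leq\nu'$, then $G\in\mathcal{G}_z^{\nu'}$, and $T$ is a $t$-cover of $\mathcal{G}_z^{\nu'}$. This would force $\dim(T\cap A\cap F_j)\geq\dim(T\cap G)\geq t$, a contradiction. Hence $\dim G\geq\nu'+1$ for every such choice.

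\emph{Deriving the contradiction.} Apply Lemma \ref{lem:10.1} to the $r-1$ families $\{A\},\{F_j\}$ and $\mathcal{F}_h$ ($h\notin\{i,j,z\}$), with $d\geq\nu'+1$ and omitted index $A$. This gives
\[
\prod_{h\neq i,j,z}|\mathcal{F}_h|<q^{(d+1)(k_1-d)-(r-2)(n-k_1)}\prod_{h\neq i,j,z}{n-d+1\brack k_h-d+1}.
\]
Converting with Lemma \ref{lem:8.1}(iii) to the scale ${n-t-1\brack k_h-t-1}$ yields a factor of order $q^{-(r-2)(\nu'-t)(n-k_1)+(\nu'+2)(k_1-\nu'-1)}$ per family, which is exactly the kind of exponent that appears in (\ref{equpropertyp}).

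It then remains to bound $|\mathcal{F}_z|$. I would use Lemma \ref{lem:2.6}, or Proposition \ref{proposition:2.4}(i), with a $t$-cover of dimension at most $\nu'$ drawn from $\mathcal{G}_z^{\nu'}$. Multiplying the two bounds and comparing with the lower bound in (\ref{equpropertyp}) for the pair $(i,j)$ should show that $\mathcal{A}$ cannot be a $q^{-(r+1)}$-proportion of $\mathcal{F}_i$. To carry this out, I would rerun the argument with $\mathcal{F}_i$ replaced by $\mathcal{A}$ in the relevant role, so that the estimate transfers to a bound on $|\mathcal{A}|/|\mathcal{F}_i|$.

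\emph{Expected obstacle.} The main difficulty is this final bookkeeping. The precise exponent in (\ref{equpropertyp}), namely $r+1+(\nu'+2)(k_1-\nu'-1)-(r-2)(\nu'-t)(n-k_1)$, must be matched against the Lemma \ref{lem:10.1} exponent at $d=\nu'+1$. This requires keeping track of which family (that of $z$, or that of $i$) absorbs the missing factor, and deciding whether the argument is cleaner with $\mathcal{A}$ in place of $\mathcal{F}_i$ or with the roles of $i$ and $z$ exchanged.
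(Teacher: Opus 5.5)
Your part (i) is the same as the paper's. So are the reduction of (ii) and (iii) to a subfamily $\mathcal{A}\subseteq\mathcal{F}_i$ with a fixed $F_j\in\mathcal{F}_j$ such that $\dim(A\cap F_j\cap T)<t$ for all $A\in\mathcal{A}$, and the structural step giving $\dim\bigl(A\cap F_j\cap\bigcap_{h\neq i,j,z}F_h\bigr)\geq\nu'+1$.

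The gap is in the last step. You apply Lemma \ref{lem:10.1} to the singleton $\{A\}$ and omit $A$, so you only bound $\prod_{h\neq i,j,z}|\mathcal{F}_h|$; for $r=3$ that product is empty and the step says nothing. Neither $|\mathcal{A}|$ nor $|\mathcal{F}_i|$ appears in this bound, in any bound on $|\mathcal{F}_z|$, or in property $\mathcal{P}$ for the pair $\{i,j\}$, whose left side is $\prod_{h\neq i,j}|\mathcal{F}_h|$. So no combination of these can give $|\mathcal{A}|<q^{-(r+1)}|\mathcal{F}_i|$, and your proposed "bookkeeping" cannot close. Separately, Lemma \ref{lem:10.1} would put a factor ${n-d+1\brack k_j-d+1}$ on the right for the slot $\{F_j\}$, which your display drops.

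The missing idea is to keep $\mathcal{A}$ as a whole family and omit the index $j$ instead. Apply Lemma \ref{lem:10.1} to the $r-1$ families indexed by $[r]\setminus\{z\}$:
\begin{itemize}
\item $\mathcal{A}$ in slot $i$;
\item $\{F_j\}$ in slot $j$ (the paper uses $\mathcal{F}_j^{=}(T,F)$, resp.\ $\mathcal{F}_j^{<}(T,F)$, which works equally well);
\item $\mathcal{F}_h$ in every other slot.
\end{itemize}
Your structural step holds for every $A\in\mathcal{A}$, so $d\geq\nu'+1$. Lemma \ref{lem:8.1}(iii), together with the fact that $(d+1)(k_1-d)-(r-2)(d-t-1)(n-k_1)$ decreases in $d$, then gives
\[
|\mathcal{A}|\prod_{h\neq z,i,j}|\mathcal{F}_h|<q^{(\nu'+2)(k_1-\nu'-1)-(r-2)(\nu'-t)(n-k_1)}\prod_{h\neq z,j}{n-t-1\brack k_h-t-1}.
\]
Now use (\ref{equpropertyp}) for the pair $\{z,j\}$, not $\{i,j\}$. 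Its left side is $\prod_{h\neq z,j}|\mathcal{F}_h|=|\mathcal{F}_i|\prod_{h\neq z,i,j}|\mathcal{F}_h|$, and it exceeds $q^{r+1}$ times the right-hand side above. The common factor $\prod_{h\neq z,i,j}|\mathcal{F}_h|$ is positive by property $\mathcal{P}$, so cancelling it yields $|\mathcal{A}|<q^{-(r+1)}|\mathcal{F}_i|$. No estimate of $|\mathcal{F}_z|$ is needed.
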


\begin{proof}

    $\rm (i)$ Observe that $\mathcal{G}_{i}^{\nu^\prime}$ is a collection of $t$-covers of $\mathcal{F}_{i}$, $T$ is a $(t+1)$-cover of $\mathcal{F}_{i}^{>}(T, F)$, and $T\cap F$ is a $t$-cover of $\mathcal{F}_{i}^{>}(T, F)$. It follows from Proposition \ref{proposition:2.4} $\rm (ii)$ that $(\rm i)$ holds.

    $\rm (ii)$ We only need to consider the case that $\mathcal{F}_{i}^{<}(T,F) \neq \varnothing$. Note that $F_{j} \in \mathcal{F}_{j}^{=}(T,F)$. Then $\mathcal{F}_{j}^{=}(T,F)\neq \varnothing$. Consider the families $\mathcal{F}_{i}^{<}(T,F),\;\mathcal{F}_{j}^{=}(T,F)$ and $\mathcal{F}_{h}\;(h \in [r] \setminus \{ z,i,j\})$. 
    Let 
    \begin{align*}
        m^\prime = \min \{ \dim(\cap_{h \neq z}F_{h}^\prime): F_{i}^\prime \in  \mathcal{F}_{i}^{<}(T,F),\;F_{j}^\prime \in\mathcal{F}_{j}^{=}(T,F),\;F_{h}^\prime \in  \mathcal{F}_{h}\;(h \in [r] \setminus \{ z,i,j\}) \}.
    \end{align*} 
     For each $F_{i}^\prime \in \mathcal{F}_{i}^{<}(T,F),\;F_{j}^\prime \in \mathcal{F}_{j}^{=}(T,F)$ and $F_{h}^\prime \in \mathcal{F}_{h}\;(h \in [r] \setminus \{ z,j,i\})$, since $$\dim(T\cap(\cap_{h \in [r] \setminus \{ z\}}F_{h}^\prime))\leq \dim(T\cap F_{i}^\prime)<t,$$ 
    we have $\cap_{h \in [r] \setminus \{ z\}}F_{h}^\prime \notin \mathcal{G}_{z}^{\nu^\prime}$, and so $\dim(\cap_{h \in [r] \setminus \{ z\}}F_{h}^\prime) \geq \nu^\prime+1$ from $\cap_{h \in [r] \setminus \{ z\}}F_{h}^\prime \in \mathcal{G}_{z}$. Therefore, $m^\prime \geq \nu^\prime+1$.
    
    By Lemma \ref{lem:10.1}, we have
    \begin{align*} 
    |\mathcal{F}_{i}^{<}(T,F)|\prod_{h \neq z,i,j}|\mathcal{F}_{h}| &< q^{(m^\prime+1)(k_{1}-m^\prime)-(r-2)(n-k_{1})} \prod_{h \neq z,j }{n-m^\prime+1 \brack k_{h}-m^\prime+1} \\
    &\leq q^{(m^\prime+1)(k_{1}-m^\prime)-(r-2)(m^\prime-t-1)(n-k_{1})} \prod_{h \neq z,j }{n-t-1 \brack k_{h}-t-1},
    \end{align*}
    where the second inequality follows from Lemma \ref{lem:8.1} $(\rm iii)$.
    It is clear that the function 
\[
Q(m') = (m'+1)(k_{1}-m')-(r-2)(m'-t-1)(n-k_{1})
\]
is decreasing for \(m' \in \bigl\{ \nu^\prime+1,\nu^\prime+2, \dots, k_{1} \bigr\}\), and so
\begin{align*}
    |\mathcal{F}_{i}^{<}(T,F)|\prod_{h \neq z,i,j}|\mathcal{F}_{h}|<q^{(\nu^\prime+2)(k_{1}-\nu^\prime-1)-(r-2)(\nu^\prime-t)(n-k_{1})} \prod_{h \neq z,j }{n-t-1 \brack k_{h}-t-1}.
\end{align*}
Since $\mathcal{F}_{1},\mathcal{F}_{2}, \dots,\mathcal{F}_{r}$ satisfy property $\mathcal{P}$, we have
    \begin{align*}
        |\mathcal{F}_{i}^{<}(T,F)|\prod_{h \neq z,i,j}|\mathcal{F}_{h}|<  \frac{1}{q^{r+1}} \prod_{h \neq z,j}|\mathcal{F}_{h}|,
    \end{align*}
implying $\rm (ii)$.
    
    $\rm (iii)$ We only need to consider the case that $\mathcal{F}_{i}^{=}(T,F) \neq \varnothing$. Observe that $F_{j} \in \mathcal{F}_{j}^{<}(T,F)$. Thus $\mathcal{F}_{j}^{<}(T,F)\neq \varnothing$. Applying the same argument as in the proof of $\rm (ii)$ to the $(r-1)$ families $\mathcal{F}_{i}^{=}(T,F),\;\mathcal{F}_{j}^{<}(T,F)$ and $\mathcal{F}_{h}\;(h \in [r] \setminus \{ z,j,i\})$ yields
    \begin{align*}
        |\mathcal{F}_{i}^{=}(T,F)|\prod_{h \neq z,j,i}|\mathcal{F}_{h}| <  \frac{1}{q^{r+1}} \prod_{h \neq z,j}|\mathcal{F}_{h}|,
    \end{align*}
    which immediately implies $(\rm iii)$.
\end{proof}

\begin{Lemma} \label{lem:10.5}
Let $n \geq k_{1}+t+1$. Let $\mathcal{F}_{1} \subseteq { V \brack k_{1}}, \mathcal{F}_{2} \subseteq { V \brack k_{2}}, \dots,  \mathcal{F}_{r} \subseteq { V \brack k_{r}}$ be maximal non-trivial $r$-cross $t$-intersecting families with $m \leq \nu^\prime$. Then there is at most one $h \in [r]$ for which $\tau_{t,h}^{\nu^\prime}=t$.
\end{Lemma}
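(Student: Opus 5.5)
Suppose, for contradiction, that there are two distinct indices $a,b\in[r]$ with $\tau_{t,a}^{\nu'}=\tau_{t,b}^{\nu'}=t$. Pick $T_a\in\mathcal{T}_t(\mathcal{G}_a^{\nu'})$ and $T_b\in\mathcal{T}_t(\mathcal{G}_b^{\nu'})$; both are $t$-subspaces. Since $m\le\nu'$, Lemma \ref{lem:3-A} (iii) (with $p=\nu'$) says $T_a$ meets every member of $\mathcal{F}_h$ in dimension at least $t$ for all $h\neq a$. As $\dim T_a=t$, this forces $T_a\subseteq F$ for every $F\in\mathcal{F}_h$ with $h\neq a$. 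Likewise, $T_b\subseteq F$ for every $F\in\cup_{h\neq b}\mathcal{F}_h$.

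The first step is to show $T_a=T_b$. Since $r\ge3$, there is an index $c\notin\{a,b\}$, and $\mathcal{F}_c$ is nonempty because the families are maximal and non-trivial. Members of $\mathcal{F}_c$ contain both $T_a$ and $T_b$. Now fix $F_b\in\mathcal{F}_b$ (it contains $T_a$) and $F_a\in\mathcal{F}_a$ (it contains $T_b$). By Lemma \ref{lem:3-A} (i), for the pair $(b,a)$ there is $G\in\mathcal{G}_{b,a}$ with $F_b\cap G\in\mathcal{G}_a^{\nu'}$. Then $T_a$ is a $t$-cover of this element, so $T_a\subseteq F_b\cap G$. Hence $T_a\subseteq G$ for some $G\in\mathcal{G}_{b,a}$, and the same holds for all the low-dimensional members used.

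This part is delicate, so I would instead argue directly via maximality. Set $\mathcal{H}_h=\{F\in{V\brack k_h}:T_a\subseteq F\}$ for $h\neq a$ and $\mathcal{H}_a=\mathcal{F}_a$. Then $\mathcal{F}_h\subseteq\mathcal{H}_h$ for all $h$. Check that $\mathcal{H}_1,\dots,\mathcal{H}_r$ are $r$-cross $t$-intersecting: we need $\dim(F_a\cap T_a)\ge t$, i.e.\ $T_a\subseteq F_a$, for every $F_a\in\mathcal{F}_a$.

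That is exactly the point where the second index is needed. Because $T_b\ne T_a$ would be required for non-triviality, the key claim is that $T_a\subseteq F_a$ for all $F_a\in\mathcal{F}_a$. To see it, take $F_a\in\mathcal{F}_a$ and $F_c\in\mathcal{F}_c$. The element $F_a\cap F_c\cap\cdots$ obtained by intersecting with members of the other families lies in $\mathcal{G}_b$, and it contains $T_a$ only if $F_a$ does. I would use Lemma \ref{lem:3-A} (i) for the pair $(a,b)$ to choose $G\in\mathcal{G}_{a,b}$ with $F_a\cap G\in\mathcal{G}_b^{\nu'}$. Then $T_b\subseteq F_a\cap G$, while $T_a\subseteq G$ since $G$ is an intersection of members of families with indices $\neq a,b$. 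Both $T_a$ and $T_b$ are then $t$-subspaces of $G$.

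The main obstacle is converting this into $T_a\subseteq F_a$. I would first try to show $T_a=T_b$ as follows. Take any $F_b\in\mathcal{F}_b$ and apply Lemma \ref{lem:3-A} (i) to get $G'\in\mathcal{G}_{b,a}$ with $F_b\cap G'\in\mathcal{G}_a^{\nu'}$. Then $T_a\subseteq F_b\cap G'$ and $T_b\subseteq G'$. Combining this with the $r$-cross condition applied to $F_a\cap F_b\cap G'$ should give $\dim(F_a\cap T_a)\ge t$ via the cover property: $T_b$ covers $\mathcal{G}_b^{\nu'}$ and $T_a$ covers $\mathcal{G}_a^{\nu'}$. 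If $T_a=T_b$, then every member of every family contains $T_a$, which contradicts non-triviality. If $T_a\neq T_b$, I would enlarge instead: replace $\mathcal{F}_a$ by all $k_a$-spaces containing $T_b$, and argue that this keeps the families $r$-cross $t$-intersecting, because the other families all contain $T_b$. By maximality the resulting tuple equals the original one. Then $T_b$ is contained in all members of all families except $\mathcal{F}_b$, and symmetrically $T_a$ is contained in all except $\mathcal{F}_a$. Finally, I would check that the tuple $\{F:T_b\subseteq F\}$ for $h=a$, together with the families containing $T_a+T_b$, is strictly dominated, which yields a contradiction with maximality or with non-triviality.
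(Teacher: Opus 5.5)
There is a genuine gap: the proposal never reaches a contradiction. Your setup is right. Since $m\le\nu'$, Lemma~\ref{lem:3-A}(iii) together with $\dim T_a=\dim T_b=t$ gives $T_a\subseteq F$ for all $F\in\bigcup_{h\neq a}\mathcal{F}_h$, and $T_b\subseteq F$ for all $F\in\bigcup_{h\neq b}\mathcal{F}_h$. The subsequent steps, however, do not work.

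\begin{itemize}
\item Your tuple $\mathcal{H}$ enlarges the wrong families. Checking that it is $r$-cross $t$-intersecting requires $T_a\subseteq F_a$ for every $F_a\in\mathcal{F}_a$. That fact alone already contradicts non-triviality, so your ``key claim'' is the whole statement, and it is left unproved.
\item The later enlargement, replacing $\mathcal{F}_a$ by all $k_a$-spaces containing $T_b$, is not justified. You only know that $T_b$ lies in members of $\mathcal{F}_h$ for $h\neq b$, and $\mathcal{F}_b$ is one of the families the new $\mathcal{F}_a$ must be intersected with.
\item The closing ``strictly dominated'' step is not an argument.
\item The detours through $\mathcal{G}_{a,b}$ and $\mathcal{G}_{b,a}$ are not needed.
\end{itemize}

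The missing idea is to apply maximality to $\mathcal{F}_a$ using $T_a$ itself. Every member of $\bigcup_{h\neq a}\mathcal{F}_h$ contains $T_a$, so adding any $k_a$-space $F\supseteq T_a$ to $\mathcal{F}_a$ keeps the tuple $r$-cross $t$-intersecting. By maximality, every $k_a$-subspace containing $T_a$ already lies in $\mathcal{F}_a$, and hence contains $T_b$ (because $a\neq b$).

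Now take $H\in{V\brack k_a-t}$ with $H\cap(T_a+T_b)=0$; this is possible since $n\ge k_1+t+1\ge k_a+t$. Then $H+T_a\in\mathcal{F}_a$, so $T_b\subseteq H+T_a$. A dimension count gives $t=\dim((H+T_a)\cap T_b)=k_a+t-\dim(H)-\dim(T_a+T_b)=\dim(T_a\cap T_b)$. Hence $T_a=T_b$, so this $t$-subspace lies in every member of every $\mathcal{F}_h$, contradicting non-triviality. This is exactly the paper's argument.
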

\begin{proof}
    Suppose to the contrary that there exist two distinct $ i, j  \in [r]$ such that $\tau_{t,i}^{\nu^\prime}  = \tau_{t,j}^{\nu^\prime}  =t$. Pick $T_{i} \in \mathcal{T}_{t}(\mathcal{G}_{i}^{\nu^\prime}) $ and $T_{j} \in \mathcal{T}_{t}(\mathcal{G}_{j}^{\nu^\prime})$. By Lemma \ref{lem:3-A} $\rm (iii)$, we have $T_{i} \subseteq F $ for each $F \in \cup_{h \neq i}\mathcal{F}_{h}$, and $T_{j} \subseteq F_{i}$ for each $F_{i} \in \mathcal{F}_{i}$. Pick $H \in { V \brack k_{i}-t}$ such that $\dim(H\cap (T_{i}+T_{j})) =0$. Since $\mathcal{F}_{1}, \mathcal{F}_{2},\dots, \mathcal{F}_{r}$ are maximal, we have $H+T_{i} \in \mathcal{F}_{i}$, and so $T_{j} \subseteq H+T_{i}$. Then
\begin{align*}
    t=\dim((H+T_{i})\cap T_{j})&= \dim(H+T_{i})+\dim(T_{j})-\dim(H+T_{i}+ T_{j})\\
    &= k_{i}+t-\dim(H)-\dim(T_{i}+T_{j})
    =\dim(T_{i}\cap T_{j}),
\end{align*}
implying that $T_{i} = T_{j}$. Thus $T_{i} \subseteq F $ for each $F \in \cup_{h =1}^{r}\mathcal{F}_{h}$, contradicting the non-triviality of $\mathcal{F}_{1}, \mathcal{F}_{2},\dots, \mathcal{F}_{r}$.
\end{proof} 

\begin{Lemma} \label{lem:10.11}
Let $n \geq k_{1}+t+1$. Assume that $\mathcal{F}_{1} \subseteq { V \brack k_{1}}, \mathcal{F}_{2} \subseteq { V \brack k_{2}}, \dots,  \mathcal{F}_{r} \subseteq { V \brack k_{r}}$ are maximal non-trivial $r$-cross $t$-intersecting families satisfying $m \leq \nu^\prime$. Suppose that $z \in [r]$ satisfies $\tau_{t,z}^{\nu^\prime}= t$, and $T_{z}\in \mathcal{T}_{t}(\mathcal{G}^{\nu^\prime}_{z})$. For each $i \in [r] \setminus \{ z\}$, there exists $T_{i} \in \mathcal{T}_{t}(\mathcal{G}^{\nu^\prime}_{i})$ such that $T_{z} \subseteq T_{i}$.
\end{Lemma}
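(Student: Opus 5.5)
The plan is to start from an arbitrary minimum $t$-cover $T\in\mathcal{T}_{t}(\mathcal{G}^{\nu'}_{i})$ and modify it, using maximality, into a $t$-cover of the same dimension that contains $T_z$. I have not verified that the modification keeps the covering property; that is the main obstacle, discussed at the end.

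\textbf{Setup.} By Lemma \ref{lem:3-A} (iii), $T_z\subseteq F$ for every $F\in\bigcup_{h\neq z}\mathcal{F}_h$. Fix $i\neq z$ and recall that
\[
\mathcal{G}_i=\{\textstyle\bigcap_{h\neq i}F_h\}.
\]
Since $r\geq 3$, each member $G=\bigcap_{h\neq i}F_h$ of $\mathcal{G}_i$ has at least one factor $F_h$ with $h\notin\{i,z\}$, and that factor contains $T_z$. The factor $F_z$ need not contain $T_z$. Hence
\[
G\cap T_z=F_z\cap T_z \quad\text{for every } G\in\mathcal{G}_i .
\]
So the only way a member of $\mathcal{G}_i$ can fail to contain $T_z$ is through $F_z$. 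By Lemma \ref{lem:10.5} we also know $\tau^{\nu'}_{t,i}\geq t+1$.

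\textbf{Step 1.} Using the identity above, I would split $G\cap(T+T_z)$ along $T$ and $T_z$. The aim is to show that if $T_z\not\subseteq T$, then one can choose a subspace $T'$ with
\[
T_z\subseteq T'\subseteq T+T_z,\qquad \dim T'=\dim T .
\]
Concretely, write $T=(T\cap T_z)\oplus R$. Replace a complement of $T\cap T_z$ in $T_z$ by a subspace of $R$ of the same dimension that is "least used" by the members of $\mathcal{G}^{\nu'}_i$. Here "least used" means chosen by a counting or averaging argument over the $t$-subspaces $G\cap T$, in the spirit of the proof of Lemma \ref{lem:10.5}.

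\textbf{Step 2 (maximality).} To make Step 1 work I would invoke maximality in the form
\[
\mathcal{F}_i=\Bigl\{F\in{V\brack k_i}:\dim(F\cap G)\geq t \ \text{for all } G\in\mathcal{G}_i\Bigr\}.
\]
Suppose some minimum cover $T$ of $\mathcal{G}^{\nu'}_i$ admitted no such replacement $T'$. Then, with $n\geq k_1+t+1$, I would take a $k_i$-subspace $F\supseteq T$ in general position with respect to $T_z$, so that $F\cap T_z=T\cap T_z\neq T_z$. I would then argue that $F\in\mathcal{F}_i$. Since every member of $\mathcal{F}_i$ contains $T_z$ (because $i\neq z$), this is a contradiction, and it would show that some minimum cover already contains $T_z$.

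\textbf{Main obstacle.} The hard step is verifying $F\in\mathcal{F}_i$ in Step 2. For members $G\in\mathcal{G}^{\nu'}_i$ this is automatic, since $T\subseteq F$ gives $\dim(F\cap G)\geq\dim(T\cap G)\geq t$. For members of $\mathcal{G}_i$ of dimension larger than $\nu'$, containment of $T$ gives nothing. Here I would exploit $m\leq\nu'$, via Lemma \ref{lem:3-A} (i) applied to the pairs $(h,i)$, so that each relevant intersection can be cut down by some $F_h$ into a low-dimensional member of a truncated family. I would then use the general-position choice of $F$ to control the high-dimensional intersections. If this fails, the fallback is to choose $T$ among all minimum covers maximizing $\dim(T\cap T_z)$ and derive a contradiction from the exchange in Step 1.
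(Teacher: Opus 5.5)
There is a genuine gap. Your proposal is not a proof: the two steps that carry all the content are both left open, as you acknowledge. These are that the exchanged subspace $T'$ is still a $t$-cover of $\mathcal{G}_i^{\nu'}$, and that the general-position $k_i$-space $F\supseteq T$ lies in $\mathcal{F}_i$.

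\textbf{Why the two steps as planned do not work.}
\begin{itemize}
\item In Step 1, choosing a ``least used'' subspace by averaging cannot produce a $t$-cover, because the covering condition must hold for every member of $\mathcal{G}_i^{\nu'}$, not on average.
\item Step 2 targets the wrong family. Every member of $\mathcal{F}_i$ contains $T_z$, so $F\notin\mathcal{F}_i$ regardless. To get a contradiction you would need to derive $\dim(F\cap G)\ge t$ for all $G\in\mathcal{G}_i$ from the hypothesis that no exchange exists. Nothing in your setup links that hypothesis to the high-dimensional members of $\mathcal{G}_i$.
\item The constraints on $T$ come from $\mathcal{G}_i^{\nu'}$, which is built from the families $\mathcal{F}_h$ with $h\neq i$. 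So the maximality that helps is that of $\mathcal{F}_z$, not that of $\mathcal{F}_i$.
\end{itemize}

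\textbf{The missing argument.} Start as in your fallback.
\begin{enumerate}
\item Take $T\in\mathcal{T}_t(\mathcal{G}_i^{\nu'})$ maximizing $\dim(T\cap T_z)$, and exchange only one dimension. Pick $H_1\in{T_z\brack 1}\setminus{T\brack 1}$ and $H_2\in{T\brack \dim T-1}$ with $T\cap T_z\subseteq H_2$.
\item By the extremal choice of $T$, the subspace $H_1+H_2$ is not a $t$-cover. This gives a concrete witness $F_z\cap G\in\mathcal{G}_i^{\nu'}$, with $F_z\in\mathcal{F}_z$ and $G\in\mathcal{G}_{z,i}$, such that $\dim((H_1+H_2)\cap F_z\cap G)\le t-1$. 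Hence $\dim(T\cap F_z\cap G)=t$.
\item By maximality, $\mathcal{F}_z$ contains every $k_z$-subspace through $T_z$, since $T_z$ lies in every member of $\mathcal{F}_h$ for $h\neq z$. Let $H$ be a complement of $T\cap F_z\cap G$ in $F_z$, and choose $F\in\mathcal{F}_z$ with $H+T_z\subseteq F$.
\item Then $\dim(F\cap G)\le t+\dim(H\cap G)\le \dim(F_z\cap G)\le\nu'$. So $F\cap G$ is a new member of $\mathcal{G}_i^{\nu'}$, and $T$ must $t$-cover it.
\item The covering condition forces $F\subseteq (T\cap G)+F_z$. Since $H_1\subseteq T_z\subseteq F$, this gives $H_1\subseteq (T\cap G)+F_z$.
\item On the other hand, $H_1\subseteq T_z\subseteq G$ gives $\dim\bigl(((T\cap G)+H_1)\cap F_z\bigr)\le \dim((H_1+H_2)\cap F_z\cap G)+1\le t$. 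Hence $\dim((T\cap G)+H_1+F_z)>\dim((T\cap G)+F_z)$, a contradiction.
\end{enumerate}

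This use of the maximality of $\mathcal{F}_z$ to manufacture new low-dimensional members of $\mathcal{G}_i^{\nu'}$ is the idea your proposal lacks. Without it, the exchange has nothing to contradict.
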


\begin{proof}
Suppose for contradiction that for some $i \in [r] \setminus \{z\}$, we have $\dim(T_i \cap T_z) < t$ for each $T_i \in \mathcal{T}_t(\mathcal{G}_i^{\nu'})$. Choose $T \in \mathcal{T}_{t}(\mathcal{G}^{\nu^\prime}_{i})$ that maximizes $\dim(T \cap T_z)$. We pick $H_{1} \in {T_{z} \brack 1} \setminus {T \brack 1} $ and $ H_{2} \in {T \brack \dim(T)-1}$ with $T \cap T_{z} \subseteq H_{2}$. Then $\dim (H_{1} + H_{2}) = \dim T = \tau_{t,i}^{\nu^\prime}$, and 
\begin{align*}
    \dim(T_{z} \cap (H_{1} + H_{2}) ) \geq 
     \dim(T_{z} \cap H_{2} )+\dim(T_{z} \cap H_{1})
    \geq \dim(T_{z} \cap T )+1>\dim(T_{z} \cap T ).
\end{align*}
By the choice of $T$, we have $H_{1} + H_{2} \notin \mathcal{T}_{t}(\mathcal{G}^{\nu^\prime}_{i})$. Hence there exist $F_{z} \in \mathcal{F}_{z} $ and $ G \in \mathcal{G}_{z,i}$ such that $F_{z} \cap G \in \mathcal{G}^{\nu^\prime}_{i}$ and $\dim((H_{1} + H_{2}) \cap F_{z} \cap G)\leq t-1$. It follows that
\begin{align*}
    \dim(T\cap F_{z} \cap G) \leq \dim(H_{2}\cap F_{z} \cap G)+1\leq \dim((H_{1}+H_{2})\cap F_{z} \cap G)+1\leq t.
\end{align*}
Therefore, we deduce $\dim(T \cap F_{z} \cap G)=t$ from $T \in \mathcal{T}_{t}(\mathcal{G}^{\nu^\prime}_{i})$.

Now we claim that $H_{1} \subseteq (T\cap G)+F_{z}$. Let $H$ be a complement of $T \cap F_{z}\cap G$ in $F_{z}$. From $T_{z}\in \mathcal{T}_{t}(\mathcal{G}^{\nu^\prime}_{z})$ and Lemma \ref{lem:3-A} $\rm (iii)$, we have $T_{z} \subseteq F^\prime$ for each $F^\prime \in \cup_{h \in [r] \setminus \{ z\}}\mathcal{F}_{h}$. Then by the maximality of $\mathcal{F}_{1}, \mathcal{F}_{2},\dots, \mathcal{F}_{r}$, the family $\mathcal{F}_z$ contains all $k_z$-subspaces containing $T_z$, and so there exists $F \in  \mathcal{F}_{z}$ with $H + T_{z}  \subseteq F$. Hence 
\begin{align*}
    \dim(F\cap G)\leq t+\dim(H\cap G) = \dim(T \cap F_{z}\cap G)+\dim(H\cap G) \leq\dim(F_{z}\cap G)\leq \nu^\prime,
\end{align*}
implying that $F \cap G \in \mathcal{G}^{\nu^\prime}_{i}$. Therefore,
\begin{align*}
     \dim(F \cap ((T\cap G)+F_{z}) )&\geq \dim((F \cap T\cap G)+(F\cap F_{z})) \\     
     &= \dim(F\cap T\cap G)+\dim(F\cap F_{z})-\dim(F\cap T \cap F_{z}\cap G)\\
     &\geq t+\dim(F\cap H) \geq k_{z},
\end{align*}
which yields $F \subseteq (T\cap G)+F_{z}$. So our claim follows from $H_{1} \subseteq T_{z} \subseteq F$.

By the fact that $T_{z}\subseteq F^\prime$ for each $F^\prime \in \cup_{h \in [r] \setminus \{ z\}}\mathcal{F}_{h}$, it holds that $H_{1} \subseteq T_{z} \subseteq G$. Hence, using  $(T\cap G)+H_{1}\subseteq(H_{1}+T)\cap G$, we obtain that
\begin{align*}
    \dim(((T\cap G)+H_{1})\cap F_{z}) \leq \dim((H_{1}+T)\cap G \cap F_{z} ) \leq \dim((H_{1} + H_{2}) \cap F_{z} \cap G)+1 \leq t.
\end{align*}
However, this leads to
\begin{align*}
    \dim((T\cap G)+H_{1}+F_{z})&=\dim(F_{z})+\dim((T\cap G)+H_{1})-\dim(((T\cap G)+H_{1})\cap F_{z})\\
    &\geq\dim(F_{z})+\dim(T\cap G)+1-t
    =\dim((T\cap G)+F_{z})+1,
\end{align*}
which contradicts that $H_{1} \subseteq (T\cap G)+F_{z}$.
\end{proof}

\begin{Lemma} \label{lem:8.2}
    Let $r \geq 1$. Then we have
    \begin{align*}
        \frac{q^{r+1}}{q^{r+1}-2} \leq q^{\frac{1}{r}}.
    \end{align*}
\end{Lemma}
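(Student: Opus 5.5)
Both sides are positive (since $q^{r+1}\geq 4>2$), so I would raise the inequality to the $r$-th power and prove the equivalent statement
\[
\left(1-\frac{2}{q^{r+1}}\right)^{-r}\leq q ,
\]
using only that $q\geq 2$ is a prime power and $r\geq 1$ is an integer.

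\emph{Bernoulli step.} Put $x=2/q^{r+1}\in(0,1)$. Bernoulli's inequality $(1-x)^r\geq 1-rx$ gives $\left(1-\frac{2}{q^{r+1}}\right)^{r}\geq 1-\frac{2r}{q^{r+1}}$. The right-hand side is positive, because $q^{r+1}\geq 2^{r+1}>2r$. Hence
\[
\left(1-\frac{2}{q^{r+1}}\right)^{-r}\leq \frac{q^{r+1}}{q^{r+1}-2r}.
\]

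\emph{Reduction.} It therefore suffices to show $q^{r+1}\leq q\,(q^{r+1}-2r)$. After dividing by $q$, this is $q^{r+1}-q^{r}\geq 2r$, that is,
\[
q^{r}(q-1)\geq 2r .
\]

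\emph{Conclusion.} Since $q\geq 2$, we have $q^r(q-1)\geq 2^r$. The bound $2^r\geq 2r$ holds for every integer $r\geq 1$: equality holds at $r=1,2$, and the left side doubles at each step while the right side grows by only $2$. Taking $r$-th roots then gives the stated inequality.

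I expect no real obstacle. The only care needed is the positivity of $1-2r/q^{r+1}$, which justifies inverting the Bernoulli bound, and that is checked above.
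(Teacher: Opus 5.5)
Your proof is correct and follows the same route as the paper: Bernoulli's inequality gives $(1-2/q^{r+1})^r \ge 1-2r/q^{r+1}$, and an elementary comparison finishes. The paper closes with $q^{r-1}\ge r$ and $q^2-2\ge q$, i.e.\ $1-2r/q^{r+1}\ge 1-2/q^2\ge 1/q$, whereas you reduce directly to $q^r(q-1)\ge 2r$; this is the same check organized slightly differently.
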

\begin{proof}
By Bernoulli's inequality, together with $q^{r-1} \geq r$ and $q^{2}-2 \geq q$, we obtain
\begin{align*}
    \left(1-\frac{2}{q^{r+1}}\right)^{r} \geq 1-\frac{2r}{q^{r+1}} \geq 1- \frac{2}{q^{2}} \geq \frac{1}{q}.
\end{align*}
So the desired inequality holds.
\end{proof}

\begin{Lemma} \label{lem:10.6}
  Let $n \geq\max\{k_{1}+t+1,k_{1}+\nu^\prime-t+3+\frac{t+2}{r-1}\}$. Assume that $\mathcal{F}_{1} \subseteq { V \brack k_{1}}, \mathcal{F}_{2} \subseteq { V \brack k_{2}}, \dots,  \mathcal{F}_{r} \subseteq { V \brack k_{r}}$ are maximal non-trivial $r$-cross $t$-intersecting families satisfying property $\mathcal{P}$. If there exists $z \in [r]$ such that $\tau_{t,z}^{\nu^\prime}= t$, then 
\begin{align} \label{eq:10.6.A}
    \prod_{h=1}^{r}|\mathcal{F}_{h}|
    \leq g(k_{r},t+1)\prod_{h=1}^{r-1}{n- t-1 \brack k_{h}-t-1},
\end{align}
and equality holds if and only if there exists $T \in {V \brack t+1}$ and $a \in[r]$ with $k_{a}=k_{r}$ such that $\mathcal{F}_{a}=\left\{F \in{V \brack k_{a}}: \dim(T \cap F) \geq t\right\}$ and $\mathcal{F}_{h}=\left\{F \in {V \brack k_{h}}: T \subseteq F\right\}\;(h \in[r] \setminus\{a\})$.

\end{Lemma}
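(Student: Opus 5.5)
The plan is to exploit the very rigid structure forced by $\tau_{t,z}^{\nu'}=t$. Fix $T_z\in\mathcal{T}_t(\mathcal{G}_z^{\nu'})$. By Lemma \ref{lem:3-A} (iii), $T_z\subseteq F$ for every $F\in\cup_{h\neq z}\mathcal{F}_h$. By maximality, $\mathcal{F}_z$ then contains every $k_z$-subspace through $T_z$. By non-triviality, some $F_z^*\in\mathcal{F}_z$ satisfies $T_z\nsubseteq F_z^*$. Lemma \ref{lem:10.5} gives $\tau_{t,h}^{\nu'}\geq t+1$ for all $h\neq z$. Lemma \ref{lem:10.11} gives, for each $h\neq z$, some $T_h\in\mathcal{T}_t(\mathcal{G}_h^{\nu'})$ with $T_z\subseteq T_h$; put $s_h=\tau_{t,h}^{\nu'}=\dim T_h$.

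\emph{Bounding $|\mathcal{F}_z|$.} Fix $i\neq z$; I will take $i$ with $s_i$ minimal. By Lemma \ref{lem:3-A} (iii), $T_i$ is a $t$-cover of every $\mathcal{F}_h$ with $h\neq i$, in particular of $\mathcal{F}_z$. Hence $|\mathcal{F}_z|\leq g(k_z,s_i)$.

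\emph{Bounding $|\mathcal{F}_h|$ for $h\neq z,i$.} Apply Lemma \ref{lem:10.2} with the roles $(z,T)$ there played by $(i,T_i)$, which is allowed since $s_i\geq t+1$. Take $F$ a $k$-subspace with $T_i\cap F=T_z$, which has dimension $t$. Every member of $\mathcal{F}_h$ contains $T_z$, so $\mathcal{F}_h^{<}(T_i,F)=\varnothing$ and $\mathcal{F}_h=\mathcal{F}_h^{=}\cup\mathcal{F}_h^{>}$. Lemma \ref{lem:10.2} (i) gives $|\mathcal{F}_h^{>}(T_i,F)|\leq\bar f(s_i,s_h,k_h,\nu')$. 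Lemma \ref{lem:10.2} (iii), with $j=z$ and the witness $F_z^*$ (so $T_z\nsubseteq T_i\cap F_z^*$), gives $|\mathcal{F}_h^{=}(T_i,F)|<q^{-(r+1)}|\mathcal{F}_h|$. Hence $|\mathcal{F}_h|<\frac{q^{r+1}}{q^{r+1}-1}\bar f(s_i,s_h,k_h,\nu')$.

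\emph{Bounding $|\mathcal{F}_i|$.} I treat $\mathcal{F}_i$ the same way, using a second index $i'\neq z,i$ (possible since $r\geq3$) and $T_{i'}$ in place of $T_i$. This gives $|\mathcal{F}_i|<\frac{q^{r+1}}{q^{r+1}-1}\bar f(s_{i'},s_i,k_i,\nu')$.

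\emph{Multiplying the bounds.} The $r-1$ estimates for $h\neq z$ carry first arguments $s_i$ or $s_{i'}$. Using that $\bar f(x,y,\cdot)$ is increasing in $x$ and the minimality of $s_i$, I arrange them so that, up to replacing first arguments by second ones as in the cyclic matching in Lemma \ref{lemma:small-m}, each factor is at most $\bar f(s,s,k_h,\nu')$ with $s=s_i$. Where the arguments do not align exactly, I use Lemma \ref{lem:8.6} monotonicity instead. This yields
\[
\prod_{h=1}^r|\mathcal{F}_h|\leq\Bigl(\tfrac{q^{r+1}}{q^{r+1}-2}\Bigr)^{r-1}g(k_z,s)\prod_{h\neq z}\bar f(s,s,k_h,\nu').
\]
Since $k_h\leq k_1$, the factors $\bar f$ are dominated as in Lemma \ref{lem:8.9}. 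The prefactor is at most $q^{(r-1)/r}<q$ by Lemma \ref{lem:8.2}.

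\emph{The case $s\geq t+2$.} The strict decrease in Lemma \ref{lem:8.9} has slack at least a factor $q$ per step under $n\geq k_1+\nu'-t+3+\frac{t+2}{r-1}$, since the ratio exceeds the displayed base, which has exponent margin $1$. This slack absorbs the prefactor, giving strict inequality below $h_{i,z}(t+1)=g(k_z,t+1)\prod_{h\neq z}{n-t-1\brack k_h-t-1}$. Lemma \ref{lem:8.10} then bounds this by $g(k_r,t+1)\prod_{h<r}{n-t-1\brack k_h-t-1}$, with equality only if $k_z=k_r$.

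\emph{The case $s=t+1$.} Here I argue structurally instead of multiplying losses. With $T:=T_i$, which is a $(t+1)$-space containing $T_z$, every $\mathcal{F}_h$ with $h\neq z,i$ satisfies $\mathcal{F}_h^{=}=\mathcal{F}_h\setminus\mathcal{F}_h[T]$. I want to show this part is empty, so that $\mathcal{F}_h\subseteq\{F: T\subseteq F\}$. If some member avoided $T$, maximality combined with the cross-intersection with $\mathcal{F}_z\supseteq\{F:T_z\subseteq F\}$ should force $\mathcal{F}_z$ to be much smaller. I would try to make this quantitative by the same $m'$-argument as in Lemma \ref{lem:10.2}, but this is the delicate part. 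The same should hold for $\mathcal{F}_i$ using $T_{i'}$, and also $T_{i'}=T$. Then $\mathcal{F}_z\subseteq\{F:\dim(F\cap T)\geq t\}$, which has size $g(k_z,t+1)$. The product bound follows, and equality forces all containments to be equalities and $k_z=k_r$ via Lemma \ref{lem:8.10}, giving exactly the stated configuration with $a=z$.

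The main obstacle I anticipate is this equality analysis at $s=t+1$, together with making the crude $q^{(r-1)/r}$ loss disappear there. My first attempt will be to show that any member of $\mathcal{F}_h$ not containing $T$ lets one enlarge the $t$-cover count, contradicting $\tau_{t,h}^{\nu'}$ or the minimality of $s_i$. Failing that, I will bound $|\mathcal{F}_h\setminus\mathcal{F}_h[T]|$ directly by Lemma \ref{lemmakey} and compare the product against $h_{i,z}(t+1)$.
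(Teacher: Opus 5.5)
Your preliminary bounds are correct: $|\mathcal{F}_z|\le g(k_z,s_i)$; the use of Lemma~\ref{lem:10.2}(i),(iii) with $(T_i,F)$, $T_i\cap F=T_z$, and witness $F_z^*$; and the reduction, via minimality of $s$ and Lemma~\ref{lem:8.6}, to $\prod_{h\neq z}h_{h,z}(s)^{1/(r-1)}$. The gap is the case $s=t+1$, which is exactly the extremal case. There $\prod_{h\neq z}h_{h,z}(t+1)^{1/(r-1)}$ already equals the target $g(k_z,t+1)\prod_{h\neq z}{n-t-1\brack k_h-t-1}$. So the factor $q^{(r-1)/r}$ you pick up from the parts $\mathcal{F}_h^{=}$ cannot be absorbed, and you leave this case open. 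Your proposed remedy is to prove $\mathcal{F}_h^{=}=\varnothing$ (and $T_{i'}=T_i$). That is left unproved, and it is not what the argument needs.

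The missing idea is a dichotomy on whether the $\mathcal{F}^{=}$-parts are empty, applied once for all values of the $\tau_{t,h}^{\nu^\prime}$.

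\emph{Either} there are distinct $i,j\neq z$ and $F_j\in\mathcal{F}_j$ with $F_j\cap T_i=T_z$. Apply Lemma~\ref{lem:10.2}(ii) with $(T,F)=(T_i,F_j)$ to the family $\mathcal{F}_z$. This gives $|\mathcal{F}_z^{<}(T_i,F_j)|<q^{-(r+1)}|\mathcal{F}_z|$, and every other member of $\mathcal{F}_z$ contains $T_i\cap F_j=T_z$. Hence $|\mathcal{F}_z|<q^{1/r}{n-t\brack k_z-t}$. This is the quantitative form of your intuition that such a member ``forces $\mathcal{F}_z$ to be much smaller''; no new $m'$-argument is needed. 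Now combine it with your lossy bounds $|\mathcal{F}_h|<q^{1/r}\bar f(\cdot)$ for $h\neq z$ and Lemma~\ref{lem:8.6}. The total loss is $q$, which Lemma~\ref{lem:8.3} absorbs since $g(k_z,t+1)>q^t{n-t\brack k_z-t}$. So this case gives strict inequality.

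\emph{Or} $T_z\subsetneqq F_h\cap T_\ell$ for all distinct $h,\ell\neq z$ and all $F_h\in\mathcal{F}_h$. Then $\mathcal{F}_h=\mathcal{F}_h^{>}(T_\ell,T_z)$ exactly, so $|\mathcal{F}_h|\le\bar f(\tau_{t,\ell}^{\nu^\prime},\tau_{t,h}^{\nu^\prime},k_h,\nu^\prime)$ with no loss. Lemma~\ref{lem:8.9} then gives the bound for every value of $s$. Equality forces $\tau_{t,h}^{\nu^\prime}=t+1$ and $k_z=k_r$. It also forces $\mathcal{F}_z=\{F:\dim(F\cap T_h)\ge t\}$ for every $h\neq z$, so all the $T_h$ coincide, and each $\mathcal{F}_h$ is the full star of that $(t+1)$-space.

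This dichotomy also removes the need for your separate treatment of $s\ge t+2$, which has its own problem. The claimed slack of a factor $q$ per step is justified only for $q\ge 3$, where the base in the proof of Lemma~\ref{lem:8.9} is at least $(q-1)^2\ge q$. For $q=2$, the hypothesis $n\ge k_1+\nu^\prime-t+3+\frac{t+2}{r-1}$ is exactly the $q=2$ threshold of Lemma~\ref{lem:8.9}. Its proof then yields only a ratio $>1$, which is not enough to absorb $q^{(r-1)/r}$.
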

\begin{proof}
 From $\tau_{t,z}^{\nu^\prime} = t$ and Lemma \ref{lem:10.5}, we have $\tau_{t,\ell}^{\nu^\prime} \geq t+1$ for each $\ell \in [r] \setminus \{z \}$. Pick $T_{z}\in \mathcal{T}_{t}(\mathcal{G}^{\nu^\prime}_{z})$. Next, by Lemma \ref{lem:10.11}, we can pick $T_{\ell} \in \mathcal{T}_{t}(\mathcal{G}^{\nu^\prime}_{\ell})$ such that $T_{z} \subseteq T_{\ell}$ for each $\ell \in [r] \setminus \{z \}$. From $T_{z}\in \mathcal{T}_{t}(\mathcal{G}^{\nu^\prime}_{z})$ and Lemma \ref{lem:3-A} $\rm (iii)$, we have $T_{z} \subseteq F^\prime$ for each $F^\prime \in \cup_{h \in [r] \setminus \{ z\}}\mathcal{F}_{h}$. The proof is then divided into two cases.

\noindent\textbf{Case 1.} There exist two distinct $i, j \in [r] \setminus \{ z\} $ and $F_{j} \in \mathcal{F}_{j} $ such that $T_{z} =F_{j} \cap T_{i} $. 

By Lemma \ref{lem:10.2} $\rm (ii)$, we have 
\begin{align*}
        |\mathcal{F}_{z}|=|\mathcal{F}_{z}^{<}(T_{i},F_{j})|+(|\mathcal{F}_{z}^{=}(T_{i},F_{j})|+|\mathcal{F}_{z}^{>}(T_{i},F_{j})|) < \frac{1}{q^{r+1}}|\mathcal{F}_{z}|+{n-t \brack k_{z}-t},
    \end{align*}
implying that
\begin{align}\label{eq:5.9.1}
    |\mathcal{F}_{z}|< \frac{q^{r+1}}{q^{r+1}-1}{n-t \brack k_{z}-t} \leq q^{\frac{1}{r}}{n-t \brack k_{z}-t},
\end{align}
where the second inequality follows from Lemma \ref{lem:8.2}.

On the other hand, let $h,\ell \in [r] \setminus \{ z\}$ be distinct. For each $F \in \mathcal{F}_{h}$, observe that $T_{\ell} \cap T_{z} = T_{z} \subseteq F\cap T_{\ell}$. Hence $\mathcal{F}_{h} =\mathcal{F}_{h}^{=}(T_{\ell},T_{z}) \cup \mathcal{F}_{h}^{>}(T_{\ell},T_{z})$. By the facts that $T_{z}\subseteq F^\prime$ for each $F^\prime \in \cup_{h \in [r] \setminus \{ z\}}\mathcal{F}_{h}$ and that $\mathcal{F}_{1}, \mathcal{F}_{2}, \dots,  \mathcal{F}_{r} $ are non-trivial,  it holds that there exists $F_{0} \in \mathcal{F}_{z} $ with $T_{\ell} \cap T_{z} = T_{z} \nsubseteq F_{0}\cap T_{\ell}$. Then Lemma \ref{lem:10.2} $\rm (iii)$ yields
\begin{align*}
    |\mathcal{F}_{h}^{=}(T_{\ell},T_{z})|<\frac{1}{q^{r+1}}|\mathcal{F}_{h}|,
\end{align*}
and so we derive
\begin{align*}
   |\mathcal{F}_{h}|
   < \frac{q^{r+1}}{q^{r+1}-1}|\mathcal{F}_{h}^{>}(T_{\ell},T_{z})| \leq q^{\frac{1}{r}}\bar{f}(\tau_{t,\ell}^{\nu^\prime},\tau_{t,h}^{\nu^\prime},k_{h},\nu^\prime),
\end{align*}
where the second inequality follows from Lemmas \ref{lem:8.2} and \ref{lem:10.2} $\rm (i)$. Therefore, by (\ref{eq:b-f}), we have
\begin{align*} 
    \prod_{h \neq z}|\mathcal{F}_{h}| \leq \left(\prod_{\ell \neq z}q^{\frac{r-2}{r}}\prod_{h \neq z,\ell}\bar{f}(\tau_{t,\ell}^{\nu^\prime},\tau_{t,h}^{\nu^\prime},k_{h},\nu^\prime)\right)^{\frac{1}{r-2}}
    =q^{\frac{r-1}{r}}\prod_{h \neq z}\bar{f}(\tau_{t,h}^{\nu^\prime},\tau_{t,h}^{\nu^\prime},k_{h},\nu^\prime).
\end{align*}
It follows from Lemma \ref{lem:8.6} that
\begin{align*}
    \prod_{h \neq z}|\mathcal{F}_{h}| \leq q^{\frac{r-1}{r}}\prod_{h\neq z}{n- t-1 \brack k_{h}-t-1}.
\end{align*}
By combining this with (\ref{eq:5.9.1}), we obtain that
\begin{align*}
    \prod_{h =1}^{r}|\mathcal{F}_{h}| < q{n-t \brack k_{z}-t}\prod_{h\neq z}{n- t-1 \brack k_{h}-t-1}.
\end{align*}
This together with Lemma \ref{lem:8.3} yields
\begin{align*}
    \prod_{h =1}^{r}|\mathcal{F}_{h}| < g(k_{z},t+1)\prod_{h\neq z}{n- t-1 \brack k_{h}-t-1}
    \leq g(k_{r},t+1)\prod_{h=1}^{r-1}{n- t-1 \brack k_{h}-t-1},
\end{align*}
where the second inequality follows from Lemma \ref{lem:8.10}.

\noindent\textbf{Case 2.} $T_{z} \subsetneqq F_{h} \cap T_{\ell} $ for any two distinct $h, \ell \in [r] \setminus \{ z\} $ and $F_{h} \in \mathcal{F}_{h}$. 

Let $h, \ell \in [r] \setminus \{ z\}$ be distinct. By Lemma \ref{lem:3-A} $\rm (iii)$, we have $\dim(F_{z} \cap T_{h}) \geq t$ for each $F_{z} \in \mathcal{F}_{z}$, which implies that
\begin{align*}
    \mathcal{F}_{z} \subseteq \left \{ F \in { V \brack k_{z}} : \dim(F \cap T_{h}) \geq t \right \}.
\end{align*}
Then by (\ref{eq:4.2.*}), we get
\begin{align}\label{eq:5.9.3}
    |\mathcal{F}_{z}|^{r-1} \leq \prod_{h \neq z}g(k_{z},\tau_{t,h}^{\nu^\prime}).
\end{align}
For each $F \in \mathcal{F}_{h}$, observe that $ T_{\ell} \cap T_{z} = T_{z} \subsetneqq F\cap T_{\ell}$, which implies that $\mathcal{F}_{h} = \mathcal{F}_{h}^{>}(T_{\ell},T_{z})$. Hence, by Lemma \ref{lem:10.2} $\rm (i)$ and (\ref{eq:b-f}), we have
\begin{align} \label{eq:5.9.4}
\prod_{h \neq z}|\mathcal{F}_{h}|\leq \left(\prod_{\ell \neq z}\prod_{h \neq \ell,z}\bar{f}(\tau_{t,\ell}^{\nu^\prime},\tau_{t,h}^{\nu^\prime},k_{h},\nu^\prime)\right)^{\frac{1}{r-2}}
    =\prod_{h \neq z}\bar{f}(\tau_{t,h}^{\nu^\prime},\tau_{t,h}^{\nu^\prime},k_{h},\nu^\prime).
\end{align}
By combining this with (\ref{eq:5.9.3}), we obtain that
\begin{align*}
\prod_{h=1}^{r}|\mathcal{F}_{h}|&\leq\prod_{h \neq z}\left(g(k_{z},\tau_{t,h}^{\nu^\prime})\bar{f}(\tau_{t,h}^{\nu^\prime},\tau_{t,h}^{\nu^\prime},k_{h},\nu^\prime)^{r-1}\right)^{\frac{1}{r-1}}.
\end{align*}
This together with Lemma \ref{lem:8.9} yields
\begin{align*}
\prod_{h=1}^{r}|\mathcal{F}_{h}| \leq \prod_{h \neq z}\left(g(k_{z},t+1){n- t-1 \brack k_{h}-t-1}^{r-1}\right)^{\frac{1}{r-1}}
    = g(k_{z},t+1)\prod_{h\neq z}{n- t-1 \brack k_{h}-t-1},
\end{align*}
and then we derive from Lemma \ref{lem:8.10} that 
\begin{align*}
\prod_{h=1}^{r}|\mathcal{F}_{h}| \leq g(k_{r},t+1)\prod_{h=1}^{r-1}{n- t-1 \brack k_{h}-t-1},
\end{align*}
and equality can hold only if $\tau_{t,h}^{\nu^\prime} = t+1\;(h \in [r] \setminus \{ z\})$ and $k_{z}=k_{r}$.

Suppose that equality in \eqref{eq:10.6.A} holds. As Case~1 yields a 
strict inequality, it can only occur in Case~2. Hence all 
inequalities in the preceding chain of Case~2 must be equalities. Applying \eqref{eq:4.2.*}, Lemmas~\ref{lem:8.9} and~\ref{lem:8.10} again yields $\mathcal{F}_z = \left\{ F \in {V \brack k_z} : \dim(F \cap T_h) \ge t \right\}$, $\tau_{t,h}^{\nu'} = t+1$ and $k_z = k_r$ respectively. Since the preceding argument applies to each \(h \in [r] \setminus \{z\}\),  we have $T_{h} = T_{\ell} $ for any two distinct $h, \ell \in [r] \setminus \{ z\}$. Therefore, there exist $T \in {V \brack t+1}$ and $a \in[r]$ with $k_{a}=k_{r}$ such that $\mathcal{F}_{a}=\left\{F \in{V \brack k_{a}}: \dim(T \cap F) \geq t\right\}$ and $\mathcal{F}_{h}=\left\{F \in {V \brack k_{h}}: T \subseteq F\right\}\;(h \in[r] \setminus\{a\})$. 
\end{proof} 

\begin{Lemma} \label{lem:10.3}
 Let $n \geq k_{1}+\nu^\prime+1$. Assume that $\mathcal{F}_{1} \subseteq { V \brack k_{1}}, \mathcal{F}_{2} \subseteq { V \brack k_{2}}, \dots,  \mathcal{F}_{r} \subseteq { V \brack k_{r}}$ are maximal non-trivial $r$-cross $t$-intersecting families satisfying property $\mathcal{P}$. Let $z \in [r]$ with $\tau_{t,z}^{\nu^\prime} \geq t+1$. 
\begin{itemize}
    \item [$\rm (i)$] If $\dim(T \cap F) \geq t+1$ for every $T \in \mathcal{T}_{t}(\mathcal{G}_{z}^{\nu^\prime})$ and $F \in  \cup_{i\in [r] \setminus \{z\}}\mathcal{F}_{i}$, then for each $h\in [r] \setminus\{ z\}$, it holds that $|\mathcal{F}_{h}| \leq  f(\tau_{t,z}^{\nu^\prime},\tau_{t,h}^{\nu^\prime},k_{h},\nu^\prime+1,t+1)$.
    \item [$\rm (ii)$] If there exist $T \in\mathcal{T}_{t}(\mathcal{G}^{\nu^\prime}_{z}), i \in [r] \setminus \{ z\}$ and $F_{i}, F_{i} ^\prime \in \mathcal{F}_{i}$ such that $\dim(T \cap F_{i})=t$ and $\dim(T \cap F_{i} \cap F_{i} ^\prime)<t$, then $|\mathcal{F}_{i}| \leq f(\tau_{t,z}^{\nu^\prime},\tau_{t,i}^{\nu^\prime},k_{i},\nu^\prime,t)$, and $|\mathcal{F}_{h}| < q^{\frac{1}{r}}\bar{f}(\tau_{t,z}^{\nu^\prime},\tau_{t,h}^{\nu^\prime},k_{h},\nu^\prime)$ for each $h\in [r] \setminus\{ z,i\}$.
    \item [$\rm (iii)$] In the remaining case, for each $h\in [r] \setminus\{ z\}$, we have $|\mathcal{F}_{h}| < q^{\frac{1}{r}}\bar{f}(\tau_{t,z}^{\nu^\prime},\tau_{t,h}^{\nu^\prime},k_{h},\nu^\prime)$.
\end{itemize}
\end{Lemma}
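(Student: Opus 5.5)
Throughout, fix $z$ with $\tau_{t,z}^{\nu'}\geq t+1$. Each $\mathcal{G}_h^{\nu'}$ is a collection of $t$-covers of $\mathcal{F}_h$ of dimension at most $\nu'$, and by Lemma \ref{lem:3-A} (iii) every $T\in\mathcal{T}_t(\mathcal{G}_z^{\nu'})$ is a $t$-cover of $\mathcal{F}_h$ for each $h\neq z$. All three parts will come from Proposition \ref{proposition:2.4} applied to $\mathcal{F}_h$ (or to a piece of it) with cover collection $\mathcal{G}_h^{\nu'}$ and auxiliary cover $T$. The hypothesis $n\geq k_1+\nu'+1$ gives $n\geq k_h+(\nu'+1)-(t+1)+1$ and $n\geq k_h+\nu'-t+1$, which are the conditions needed to apply Proposition \ref{proposition:2.4} (ii) with $\ell=\nu'$ and the $(t+1)$-version with $\ell+1$.

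\textbf{Part (i).} Fix any $T\in\mathcal{T}_t(\mathcal{G}_z^{\nu'})$. By hypothesis $T$ is a $(t+1)$-cover of $\mathcal{F}_h$, and $\dim T=\tau_{t,z}^{\nu'}$. Proposition \ref{proposition:2.4} (ii), first statement, with $S=T$ and $\mathcal{G}=\mathcal{G}_h^{\nu'}$, then gives $|\mathcal{F}_h|\leq f(\tau_{t,z}^{\nu'},\tau_{t,h}^{\nu'},k_h,\nu'+1,t+1)$.

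\textbf{Part (ii).} For $\mathcal{F}_i$, use $T$ as a $t$-cover of $\mathcal{F}_i$ of dimension $\tau_{t,z}^{\nu'}$. In the proof of Proposition \ref{proposition:2.4} (i), replace the minimal cover by $T$: write $\mathcal{F}_i=\bigcup_{H\in{T\brack t}}\mathcal{F}_i[H]$ and apply Lemma \ref{lemmakey} to get $|\mathcal{F}_i|\leq f(\tau_{t,z}^{\nu'},\tau_{t,i}^{\nu'},k_i,\nu',t)$.

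For $h\neq z,i$, decompose $\mathcal{F}_h$ by \eqref{equdecomp} with respect to $(T,F_i)$, where $\dim(T\cap F_i)=t$. By Lemma \ref{lem:10.2} (i), $|\mathcal{F}_h^{>}(T,F_i)|\leq\bar f(\tau_{t,z}^{\nu'},\tau_{t,h}^{\nu'},k_h,\nu')$. Taking $j=i$ and the member $F_i$ itself in Lemma \ref{lem:10.2} (ii) gives $|\mathcal{F}_h^{<}(T,F_i)|<q^{-(r+1)}|\mathcal{F}_h|$. Since $T\cap F_i\nsubseteq T\cap F_i'$, taking $j=i$ and $F_j=F_i'$ in Lemma \ref{lem:10.2} (iii) gives $|\mathcal{F}_h^{=}(T,F_i)|<q^{-(r+1)}|\mathcal{F}_h|$. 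Summing the three parts,
$$|\mathcal{F}_h|<\frac{q^{r+1}}{q^{r+1}-2}\,\bar f(\tau_{t,z}^{\nu'},\tau_{t,h}^{\nu'},k_h,\nu'),$$
and Lemma \ref{lem:8.2} bounds the prefactor by $q^{1/r}$.

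\textbf{Part (iii).} The remaining case is that some $T\in\mathcal{T}_t(\mathcal{G}_z^{\nu'})$, some $i\neq z$ and some $F_i\in\mathcal{F}_i$ satisfy $\dim(T\cap F_i)=t$, and every such $F_i$ has $T\cap F_i\subseteq F_i'$ for all $F_i'\in\mathcal{F}_i$.

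For $h\neq z,i$, run the argument of part (ii), with one change. The $\mathcal{F}^{<}$ part is again handled by Lemma \ref{lem:10.2} (ii) via $F_i$. The $\mathcal{F}^{=}$ part can no longer use $\mathcal{F}_i$ for Lemma \ref{lem:10.2} (iii). Instead, non-triviality supplies a member of some family not containing $X:=T\cap F_i$. This member cannot lie in $\mathcal{F}_i$, because all of $\mathcal{F}_i$ contains $X$. If it lies in $\mathcal{F}_j$ with $j\neq z,i,h$, apply Lemma \ref{lem:10.2} (iii) with that $j$. If it lies in $\mathcal{F}_z$ or $\mathcal{F}_h$, we need another route. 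Each $F\in\mathcal{F}_h^{=}(T,F_i)$ satisfies $T\cap F=X$, so $F$ is itself a valid witness for the families indexed $\neq z,h$. The plan is therefore to swap the roles of $h$ and $i$, or to exploit that all of $\mathcal{F}_i$ contains $X$, so that $\mathcal{F}_i$ is covered in the $\bar f$ sense.

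For $h=i$, every member of $\mathcal{F}_i$ contains $X$ and $T$ is a $t$-cover. Write $\mathcal{F}_i=\{F_i'\in\mathcal{F}_i: T\cap F_i'=X\}\cup\mathcal{F}_i^{>}(T,F_i)$. The second piece is bounded by $\bar f$ via Lemma \ref{lem:10.2} (i). The first piece should be smaller than $q^{-(r+1)}|\mathcal{F}_i|$, by applying Lemma \ref{lem:10.2} (iii) with some other index $j$ whose family has a member meeting $T$ in something not containing $X$.

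\textbf{Main obstacle.} The hard step is this case analysis in (iii): guaranteeing a suitable witness $j\neq z,i$ for Lemma \ref{lem:10.2} (ii)/(iii) for every $h$. I would first try to use non-triviality together with maximality, as in Lemma \ref{lem:10.5}, to produce a member of $\bigcup_{j\neq z}\mathcal{F}_j$ meeting $T$ in a $t$-space different from $X$. If that fails, I would fall back on the observation that if every member of every $\mathcal{F}_j$ with $j\neq z$ contains $X$, then $\mathcal{F}_j=\mathcal{F}_j^{=}\cup\mathcal{F}_j^{>}$ with respect to $(T,X)$, and handle $\mathcal{F}^{=}$ by Lemma \ref{lem:10.2} (iii) using a member of $\mathcal{F}_z$ not containing $X$ when $r\geq3$ leaves a third index available.
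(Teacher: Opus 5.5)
Your parts (i) and (ii) are correct and match the paper; for $|\mathcal{F}_i|$ in (ii) the paper cites Lemma~\ref{lem:3-A}~(iv) with Proposition~\ref{proposition:2.4}~(i), which amounts to your direct use of $T$.

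Part (iii) has a genuine gap. You leave open exactly the cases that need an idea, and your fallback is invalid. Non-triviality is the wrong source of a witness, because it may only give a member of $\mathcal{F}_z$. Lemma~\ref{lem:10.2}~(iii) cannot use such a member: its witness index must lie outside $\{z,i\}$, since the proof intersects the families indexed by $[r]\setminus\{z\}$ and compares the result with $\mathcal{G}_z^{\nu'}$.

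The right tool is the hypothesis $\tau_{t,z}^{\nu'}\geq t+1$. Suppose $X=T\cap F_i$ lay in every member of $\bigcup_{h\neq z}\mathcal{F}_h$. Then $X\subseteq G$ for all $G\in\mathcal{G}_z^{\nu'}$, so the $t$-space $X$ would be a $t$-cover of $\mathcal{G}_z^{\nu'}$, a contradiction. Since all of $\mathcal{F}_i$ contains $X$, this gives $j\notin\{z,i\}$ and $F_j'\in\mathcal{F}_j$ with $X\not\subseteq F_j'$. In particular, the situation in your fallback (every member of every $\mathcal{F}_h$, $h\neq z$, containing $X$) never arises. With $F_j'$ as witness, Lemma~\ref{lem:10.2}~(iii) bounds $\mathcal{F}_i^{=}(T,F_i)$ and $\mathcal{F}_h^{=}(T,F_i)$ for every $h\notin\{z,i,j\}$. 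Lemma~\ref{lem:10.2}~(ii) with witness $F_i$ bounds the $\mathcal{F}^{<}$ parts, and $\mathcal{F}_i^{<}(T,F_i)=\varnothing$.

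What remains is $\mathcal{F}_j$ itself, which is your case where the witness lies in $\mathcal{F}_h$; there no external witness for the ``$=$'' part is available. The missing observation is that in the remaining case the negation of (ii) holds at \emph{every} index, not only at $i$. If some $F\in\mathcal{F}_j$ had $T\cap F=X$, then $\dim(T\cap F)=t$ and $\dim(T\cap F\cap F_j')<t$, so the hypothesis of (ii) would hold with index $j$. Hence $\mathcal{F}_j^{=}(T,F_i)=\varnothing$, and $|\mathcal{F}_j|<\frac{q^{r+1}}{q^{r+1}-1}|\mathcal{F}_j^{>}(T,F_i)|\leq q^{1/r}\bar f(\tau_{t,z}^{\nu'},\tau_{t,j}^{\nu'},k_j,\nu')$. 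You came close when you noted that each $F\in\mathcal{F}_h^{=}(T,F_i)$ meets $T$ exactly in $X$. Instead of using $F$ as a witness for the other families, you needed to see that $F$, together with the member of $\mathcal{F}_h$ not containing $X$, would put you in case (ii).
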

\begin{proof}
We note that the existence of $z$ is guaranteed by Lemma \ref{lem:10.5}.

    $\rm (i)$ Let $h \in [r] \setminus \{ z\}$, and fix $T \in \mathcal{T}_{t}(\mathcal{G}_{z}^{\nu^\prime})$. 
    Observe that $\mathcal{G}_{h}^{\nu^\prime}$ is a collection of $t$-covers of $\mathcal{F}_{h}$, and $T$ is a $(t+1)$-cover of $\mathcal{F}_{h}$. It follows from Proposition \ref{proposition:2.4} $\rm (ii)$ that $(\rm i)$ holds.

    $\rm (ii)$ Observe that $\mathcal{G}_{i}^{\nu^\prime}$ is a collection of $t$-covers of $\mathcal{F}_{i}$. By Lemma \ref{lem:3-A} $\rm (iv)$ and Proposition \ref{proposition:2.4} $\rm (i)$, we have 
    \begin{align*}
     |\mathcal{F}_{i}| \leq f(\tau_{t,z}^{\nu^\prime},\tau_{t,i}^{\nu^\prime},k_{i},\nu^\prime,t).   
    \end{align*}
    
    Let $h \in [r] \setminus \{z,i \}$. Observe that $\mathcal{F}_{h} = \mathcal{F}_{h}^{>}(T,F_{i}) \cup \mathcal{F}_{h}^{=}(T,F_{i}) \cup \mathcal{F}_{h}^{<}(T,F_{i})$. Since $\dim(T \cap F_{i})=t$, by Lemma \ref{lem:10.2} $\rm (ii)$, we have $|\mathcal{F}_{h}^{<}(T,F_{i})|<\frac{1}{q^{r+1}}|\mathcal{F}_{h}|$. Also, since $\dim(T \cap F_{i} \cap F_{i} ^\prime)<t$, by Lemma \ref{lem:10.2} $\rm (iii)$, we have $|\mathcal{F}_{h}^{=}(T,F_{i})|<\frac{1}{q^{r+1}}|\mathcal{F}_{h}|$. Then
    \begin{align*}
        |\mathcal{F}_{h}|<\frac{q^{r+1}}{q^{r+1}-2}|\mathcal{F}_{h}^{>}(T,F_{i})|\leq q^{\frac{1}{r}}\bar{f}(\tau_{t,z}^{\nu^\prime},\tau_{t,h}^{\nu^\prime},k_{h},\nu^\prime),
    \end{align*}
    where the second inequality follows from Lemmas \ref{lem:8.2} and \ref{lem:10.2} $\rm (i)$.

     (iii) Since the assumption in (i) does not hold, there exist $T \in\mathcal{T}_{t}(\mathcal{G}^{\nu^\prime}_{z})$, $i \in [r] \setminus\{ z\}$ and $F_{i} \in \mathcal{F}_{i}$ such that $\dim(T \cap F_{i})\leq t$. We observe from Lemma \ref{lem:3-A} $\rm (iii)$ that $\dim(T \cap F_{i})=t$. Next, since the assumption in (ii) does not hold, we have $\dim(T \cap F_{i} \cap F_{i}^\prime) \geq t$ for each $F_{i}^\prime \in \mathcal{F}_{i}$. Thus $\mathcal{F}_{i} = \mathcal{F}_{i}^{>}(T,F_{i}) \cup \mathcal{F}_{i}^{=}(T,F_{i})$. Since $\tau_{t,z}^{\nu^\prime} \geq t+1$, there exist $j \in [r] \setminus\{ z,i\} $ and $F_{j}^\prime \in \mathcal{F}_{j}$ such that $\dim(T \cap F_{i} \cap F_{j}^\prime)<t$. By Lemma \ref{lem:10.2} $\rm (iii)$, we have $|\mathcal{F}_{i}^{=}(T,F_{i})|<\frac{1}{q^{r+1}}|\mathcal{F}_{i}|$. Then
    \begin{align*}
        |\mathcal{F}_{i}|<\frac{q^{r+1}}{q^{r+1}-1}|\mathcal{F}_{i}^{>}(T,F_{i})|\leq q^{\frac{1}{r}}\bar{f}(\tau_{t,z}^{\nu^\prime},\tau_{t,i}^{\nu^\prime},k_{i},\nu^\prime).
    \end{align*}

    We claim that $T \cap F \neq T \cap F_{i}$ for each $F \in \mathcal{F}_{j}$. Indeed, if there exists $F_{j} \in \mathcal{F}_{j}$ such that $T \cap F_{j} = T \cap F_{i}$, then $\dim(T \cap F_{j})=t$ and $\dim(T \cap F_{j}\cap F_{j}^\prime)<t$, implying that the conditions of part $\rm (ii)$ are satisfied, which contradicts the current assumption. Thus $\mathcal{F}_{j} = \mathcal{F}_{j}^{>}(T,F_{i}) \cup \mathcal{F}_{j}^{<}(T,F_{i})$. It follows from $\dim(T \cap F_{i})=t$ and Lemma \ref{lem:10.2} $\rm (ii)$ that $|\mathcal{F}_{j}^{<}(T,F_{i})|<\frac{1}{q^{r+1}}|\mathcal{F}_{j}|$.  Then
    \begin{align*}
        |\mathcal{F}_{j}|<\frac{q^{r+1}}{q^{r+1}-1}|\mathcal{F}_{j}^{>}(T,F_{i})|\leq q^{\frac{1}{r}}\bar{f}(\tau_{t,z}^{\nu^\prime},\tau_{t,j}^{\nu^\prime},k_{j},\nu^\prime).
    \end{align*}

    As the families involving \(\mathcal{F}_{i}\) and \(\mathcal{F}_{j}\) have already been handled, the proof is complete when \(r=3\). Now we assume that \(r\ge 4\). Let $h \in [r] \setminus \{ z,i,j\}$. Observe that $\mathcal{F}_{h} = \mathcal{F}_{h}^{>}(T,F_{i}) \cup \mathcal{F}_{h}^{=}(T,F_{i}) \cup \mathcal{F}_{h}^{<}(T,F_{i})$. Since $\dim(T \cap F_{i})=t$, by Lemma \ref{lem:10.2} $\rm (ii)$, we have $|\mathcal{F}_{h}^{<}(T,F_{i})|<\frac{1}{q^{r+1}}|\mathcal{F}_{h}|$. Also, from $\dim(T \cap F_{i} \cap F_{j}^\prime)<t$ and Lemma \ref{lem:10.2} $\rm (iii)$, we obtain $|\mathcal{F}_{h}^{=}(T,F_{i})|<\frac{1}{q^{r+1}}|\mathcal{F}_{h}|$. Then
    \begin{align*}
        |\mathcal{F}_{h}|<\frac{q^{r+1}}{q^{r+1}-2}|\mathcal{F}_{h}^{>}(T,F_{i})|\leq q^{\frac{1}{r}}\bar{f}(\tau_{t,z}^{\nu^\prime},\tau_{t,h}^{\nu^\prime},k_{h},\nu^\prime).
    \end{align*}
    This completes the proof.
\end{proof}

\begin{Lemma} \label{lem:10.4}
 Let $n \geq k_{1}+\nu^\prime+3$. Let $\mathcal{F}_{1} \subseteq { V \brack k_{1}}, \mathcal{F}_{2} \subseteq { V \brack k_{2}}, \dots, \mathcal{F}_{r} \subseteq { V \brack k_{r}}$ be maximal non-trivial $r$-cross $t$-intersecting families satisfying property $\mathcal{P}$. If $\tau_{t,z}^{\nu^\prime}\geq t+1$ for all $z \in [r]$, then 
\begin{align} \label{eq:S-A}
    \prod_{h=1}^{r}|\mathcal{F}_{h}|
    < g(k_{r},t+1)\prod_{h=1}^{r-1}{n- t-1 \brack k_{h}-t-1}.
\end{align}
\end{Lemma}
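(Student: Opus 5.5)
Since every $\tau_{t,z}^{\nu'}\geq t+1$, Lemma \ref{lem:10.3} applies with every index $z\in[r]$ in the role of the distinguished index. For each $z$ we are in exactly one of the three cases (i), (ii), (iii) of that lemma. We then combine these estimates by multiplying over all $z$, so that the first argument $\tau_{t,z}^{\nu'}$ of $f$ or $\bar f$ attached to family $h$ is matched cyclically with the second argument $\tau_{t,h}^{\nu'}$. This reduces everything to diagonal quantities $f(s,s,\cdot)$ and $\bar f(s,s,\cdot)$, in the same way as in the proofs of Lemma \ref{lemma:small-m} and Lemma \ref{lem:10.6}.

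First we simplify the three types of bound so they can be compared directly. In case (iii), and for $h\neq i$ in case (ii), we have $|\mathcal{F}_h|<q^{1/r}\bar f(\tau_{t,z}^{\nu'},\tau_{t,h}^{\nu'},k_h,\nu')$. In case (i) we have $|\mathcal{F}_h|\leq f(\tau_{t,z}^{\nu'},\tau_{t,h}^{\nu'},k_h,\nu'+1,t+1)$, which we compare with $\bar f$. Since ${s\brack t+1}\leq {s-t\brack 1}{s\brack t}$-type estimates are lossy, we instead keep $f(\cdot,\cdot,k_h,\nu'+1,t+1)$ as is and use Lemma \ref{lem:8.4} with $y=t+1$, $b=\nu'+1$. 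This needs $n\geq k_1+\nu'+2$ (resp.\ $+3$ for $q=2$), which is why the hypothesis is $n\geq k_1+\nu'+3$. For the exceptional index $i$ in case (ii) we have $|\mathcal{F}_i|\leq f(\tau_{t,z}^{\nu'},\tau_{t,i}^{\nu'},k_i,\nu',t)$, which after diagonal matching is at most ${n-t\brack k_i-t}$ by Lemma \ref{lem:8.4}. Since ${n-t\brack k_i-t}<q^{-t}g(k_i,t+1)$ by Lemma \ref{lem:8.3}, this family plays the role of the "$g$" factor.

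Next we take the geometric mean over $z\in[r]$ of the bounds $\prod_{h\neq z}|\mathcal{F}_h|\leq\prod_{h\neq z}B_{z,h}$. Raising to the power $1/(r-1)$ bounds $\prod_h|\mathcal{F}_h|$. In each $\prod_z\prod_{h\neq z}B_{z,h}$, the arguments $\tau_{t,z}^{\nu'}$ and $\tau_{t,h}^{\nu'}$ pair up exactly as in (\ref{eq:5.9.4}). After the diagonal reduction, each family $\mathcal{F}_h$ receives, from each $z\neq h$, a factor that is one of the following:
\begin{itemize}
\item at most $q^{1/r}{n-t-1\brack k_h-t-1}$, by Lemma \ref{lem:8.6};
\item at most ${n-t-1\brack k_h-t-1}$, by Lemma \ref{lem:8.4} with $y=t+1$;
\item at most ${n-t\brack k_h-t}$, which occurs only when $h$ is the exceptional index of case (ii) for that $z$.
\end{itemize}
Collecting the factors gives
\[
\prod_h|\mathcal{F}_h|\;\le\;\Bigl(\prod_h q^{c_h}{n-t\brack k_h-t}^{e_h}{n-t-1\brack k_h-t-1}^{r-1-e_h}\Bigr)^{1/(r-1)}.
\]
Here $e_h$ is the number of $z$ for which $h$ is exceptional, so $\sum_h e_h\leq r$. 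The power of $q$ coming from the $q^{1/r}$ factors satisfies $\sum_h c_h\le r(r-1)/r=r-1$. We then use ${n-t\brack k_h-t}<q^{-t}g(k_h,t+1)$ and $g(k_h,t+1)<q^{n-k_h+1}{n-t-1\brack k_h-t-1}$ (Lemma \ref{lem:8.1}(iii)). If $\sum e_h\le r-1$, the $g$-factors convert into at most one $g(k_a,t+1)$, and the remaining slack $q^{-t}$ absorbs the $q^{(r-1)/(r-1)}=q$ loss, giving strict inequality. Finally, Lemma \ref{lem:8.10} replaces $g(k_a,t+1)$ by $g(k_r,t+1)$.

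The main obstacle is the bookkeeping when $\sum_h e_h$ is large, i.e.\ when case (ii) occurs for most $z$ with exceptional indices spread over different families. Each extra ${n-t\brack k-t}$ factor costs roughly $q^{n-k}$ relative to ${n-t-1\brack k-t-1}$, and this cannot be absorbed by a single $g$. Our first attempt here is to avoid the geometric mean: we pick one $z$ that is in case (ii) with exceptional index $i$ and use its bounds directly, namely $|\mathcal{F}_i|\le{n-t\brack k_i-t}$ together with the $\bar f$ bounds for $h\ne z,i$. We bound $|\mathcal{F}_z|$ through a second index $z'$ whose exceptional index differs from $z$, so that $\mathcal{F}_z$ receives a $q^{1/r}\bar f$ bound. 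Then at most one ${n-t\brack k-t}$ factor appears, and the $q^{2/r}$ loss is covered by the $q^{t}$ gap of Lemma \ref{lem:8.3}. If no such $z'$ exists, all exceptional indices point to $z$, and we are back to at most one exceptional family, handled as above.
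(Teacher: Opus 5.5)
\paragraph{A gap in the all-exceptional case.}
Multiplying the bounds of Lemma~\ref{lem:10.3} over all sources $z$ works only if at least one $z$ contributes no exceptional bound. (A small bookkeeping point: after matching arguments, the factor ${\tau_{t,z}^{\nu'}\brack t}$ of an exceptional bound $f(\tau_{t,z}^{\nu'},\tau_{t,i}^{\nu'},k_i,\nu',t)$ pairs with $\tau_{t,z}^{\nu'}$. So the resulting ${n-t\brack k-t}$ belongs to the source $z$, not to $i$. Only the total count matters, so this is harmless.)

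The case where every $z$ falls under Lemma~\ref{lem:10.3}(ii) is a genuine gap, and your workaround does not close it. Suppose you take all bounds from one source $z$ plus one bound on $\mathcal{F}_z$ from some $z'$. Then $\tau_{t,z}^{\nu'}$ is the first argument $r-1$ times but a second argument only once. The product therefore cannot be regrouped into diagonal terms, and Lemmas~\ref{lem:8.4} and~\ref{lem:8.6} do not apply. Concretely:
\begin{itemize}
\item $|\mathcal{F}_i|\le f(\tau_{t,z}^{\nu'},\tau_{t,i}^{\nu'},k_i,\nu',t)$ does not by itself give $|\mathcal{F}_i|\le{n-t\brack k_i-t}$.
\item For $\tau_{t,h}^{\nu'}=t+1$, the bound $\bar f(\tau_{t,z}^{\nu'},\tau_{t,h}^{\nu'},k_h,\nu')$ equals ${\tau_{t,z}^{\nu'}-t\brack 1}{n-t-1\brack k_h-t-1}$. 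That is a loss of order $q^{\tau_{t,z}^{\nu'}-t-1}$ per family, with nothing to compensate it.
\end{itemize}
Your subcase ``no such $z'$'' is also miscounted. If every $z'\neq z$ has exceptional index $z$, and $z$ itself has exceptional index $i$, then all $r$ indices are sources of exceptional bounds. You are then not back to at most one exceptional family, and the geometric mean fails exactly as you feared.

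\paragraph{The missing idea.}
Use exactly one bound per family, arranged along a cyclic permutation, and exploit the freedom in Lemma~\ref{lem:10.3}(ii). Let $V_z$ be the set of indices $i$ for which (ii) applies with source $z$. Whenever $V_z\neq\{a\}$, the family $\mathcal{F}_a$ gets a non-exceptional bound $q^{1/r}f(\tau_{t,z}^{\nu'},\tau_{t,a}^{\nu'},k_a,\nu'+1,t+1)$ from $z$: if $V_z$ contains some $j\neq a$, apply (ii) with $j$ as the exceptional index.
\begin{itemize}
\item If some $a\in[r]$ satisfies $V_z\neq\{a\}$ for all $z$, build an $r$-cycle $(a_1\,\dots\,a_r)$ with $a_1=a$ in which only the step $a_{r-1}\to a_r$ can be exceptional. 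Bounding $\mathcal{F}_{a_{s+1}}$ via source $a_s$ puts each $\tau_{t,s}^{\nu'}$ once in each slot. This gives at most one ${n-t\brack k-t}$ factor and a loss of at most $q$.
\item Otherwise every $V_z$ is a singleton and $z\mapsto V_z$ is a fixed-point-free permutation of $[r]$. Discard the exceptional bound for each $z$ and multiply the remaining $r-2$ bounds over all $z$. This product is balanced and gives $\bigl(\prod_h|\mathcal{F}_h|\bigr)^{r-2}\le q^{r-2}\prod_h\bar f(\tau_{t,h}^{\nu'},\tau_{t,h}^{\nu'},k_h,\nu')^{r-2}$. Lemmas~\ref{lem:8.6} and~\ref{lem:8.3} then finish.
\end{itemize}
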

\begin{proof}
 For each $z \in [r]$, let $V_{z}$ be the set of  $i \in [r] \setminus \{ z\}$ with the property that there exist $T \in\mathcal{T}_{t}(\mathcal{G}^{\nu^\prime}_{z})$ and $F_{i}, F_{i} ^\prime \in \mathcal{F}_{i}$ satisfying $\dim(T \cap F_{i})=t$ and $\dim(T \cap F_{i} \cap F_{i} ^\prime)<t$. Now we divide the proof into the following two cases.

\noindent\textbf{Case 1.} For each $i \in [r]$, there exists $i^\prime \in [r]$ such that $V_{i^\prime} = \{i\}$ .

In this case, observe that $i^\prime$ is unique for each $i \in [r]$, and $i_{1}^\prime \neq i_{2}^\prime$ for any two distinct $i_{1},i_{2} \in [r]$. By Lemma \ref{lem:10.3} $\rm (ii)$ and (\ref{eq:b-f}), we have
\begin{align*}
    \prod_{i=1}^{r}\prod_{h \neq i,i^\prime}|\mathcal{F}_{h}| \leq \prod_{i=1}^{r}q^{\frac{r-2}{r}}\prod_{h \neq i,i^\prime}\bar{f}(\tau_{t,i^\prime}^{\nu^\prime},\tau_{t,h}^{\nu^\prime},k_{h},\nu^\prime)
    = q^{r-2}\left( \prod_{ h =1}^{r}\bar{f}(\tau_{t,h}^{\nu^\prime},\tau_{t,h}^{\nu^\prime},k_{h},\nu^\prime)\right)^{r-2}.
\end{align*}
It follows from Lemma \ref{lem:8.6} that
\begin{align*}
\left(\prod_{h=1}^{r}|\mathcal{F}_{h}|\right)^{r-2} \leq q^{r-2}\left( \prod_{ h =1}^{r}{n- t-1 \brack k_{h}-t-1}\right)^{r-2} < \left( g(k_{r},t+1)\prod_{h=1}^{r-1}{n- t-1 \brack k_{h}-t-1}\right)^{r-2},
\end{align*}
where the second inequality follows from Lemma \ref{lem:8.3}. Consequently, (\ref{eq:S-A}) holds.

\noindent\textbf{Case 2.} There exists $i \in [r]$ such that $V_{z} \neq\{i\}$ for each $z \in [r]$.

We construct inductively a permutation of $[r]$. Set $a_1 = i$. For $k = 1, 2, \dots, r-2$, having already chosen pairwise distinct elements $a_1, a_2, \dots, a_k$, we define $a_{k+1}$ as follows:
\begin{itemize}
    \item [$\rm (i)$] if $V_{a_k}$ is not a singleton, pick 
          $a_{k+1}$ arbitrarily from 
          $[r] \setminus \{a_1, a_2, \dots, a_k\}$;
    \item [$\rm (ii)$] if $V_{a_k}$ is a singleton, pick $a_{k+1}$ arbitrarily from $[r] \setminus \bigl(\{a_1, a_2, \dots, a_k\} \cup V_{a_k}\bigr)$.
\end{itemize}
After completing the first $r-2$ steps, the set 
$[r] \setminus \{a_1, a_2, \dots, a_{r-1}\}$ contains exactly one 
element, and we call it $a_{r}$. The resulting $r$-cycle $(a_1 \; a_2 \; \dots \; a_{r})$ 
is the desired permutation. Set $a_{r+1}=a_{1}$. Observe that $V_{a_{h}} \neq\{a_{h+1}\}$ for each $h \in [r] \setminus \{ r-1\}$, whereas we may have \(V_{a_{r-1}} = \{a_{r}\}\).

We proceed by proving a claim.

\noindent{\bf Claim.}\;We have 
\begin{equation*}
     |\mathcal{F}_{a_{h+1}}|< q^{\frac{1}{r}}f(\tau_{t,a_{h}}^{\nu^\prime},\tau^{\nu^\prime}_{t,a_{h+1}},k_{a_{h+1}},\nu^\prime+1,t+1)
 \end{equation*}
 for each $h \in [r] $ with $V_{a_{h}} \neq\{a_{h+1}\}$. 
 
Since $\tau_{t,a_{h}}^{\nu^\prime} \geq t+1$, we have ${\tau_{t,a_{h}}^{\nu^\prime}-t \brack 1} \leq {\tau_{t,a_{h}}^{\nu^\prime} \brack t+1}$, which implies that 
 \begin{align*} 
     \bar{f}(\tau_{t,a_{h}}^{\nu^\prime},\tau^{\nu^\prime}_{t,a_{h+1}},k_{a_{h+1}},\nu^\prime) \leq f(\tau_{t,a_{h}}^{\nu^\prime},\tau^{\nu^\prime}_{t,a_{h+1}},k_{a_{h+1}},\nu^\prime+1,t+1).
 \end{align*}
If $V_{a_{h}} =\varnothing$, then Lemma \ref{lem:10.3} $\rm (i)$ and $\rm (iii)$ yield
\begin{align*}
    |\mathcal{F}_{a_{h+1}}| \leq \max\left\{f(\tau_{t,a_{h}}^{\nu^\prime},\;\tau^{\nu^\prime}_{t,a_{h+1}},k_{a_{h+1}},\nu^\prime+1,t+1),q^{\frac{1}{r}}\bar{f}(\tau_{t,a_{h}}^{\nu^\prime},\tau^{\nu^\prime}_{t,a_{h+1}},k_{a_{h+1}},\nu^\prime) \right\},
\end{align*}
and the desired inequality is true. Assume that $V_{a_{h}} \neq\varnothing$. Then there exists $j \in [r] \setminus \{ a_{h},a_{h+1}\}$ such that $j \in V_{a_{h}}$. By Lemma \ref{lem:10.3} $\rm (ii)$, we have
\begin{align*}
    |\mathcal{F}_{a_{h+1}}| <q^{\frac{1}{r}}\bar{f}(\tau_{t,a_{h}}^{\nu^\prime},\tau^{\nu^\prime}_{t,a_{h+1}},k_{a_{h+1}},\nu^\prime)\leq q^{\frac{1}{r}}f(\tau_{t,a_{h}}^{\nu^\prime},\tau^{\nu^\prime}_{t,a_{h+1}},k_{a_{h+1}},\nu^\prime+1,t+1),
\end{align*}
as claimed.
 
Suppose $V_{a_{r-1}}\neq \{ a_{r} \}$. By the claim and (\ref{eq:f}), we have
\begin{align*}
    \prod_{h=1}^{r}|\mathcal{F}_{h}| &< \prod_{h=1}^{r}\left(q^{\frac{1}{r}}f(\tau_{t,a_{h}}^{\nu^\prime},\tau^{\nu^\prime}_{t,a_{h+1}},k_{a_{h+1}},\nu^\prime+1,t+1)\right)=q\left(\prod_{h=1}^{r}f(\tau^{\nu^\prime}_{t,h},\tau_{t,h}^{\nu^\prime},k_{h},\nu^\prime+1,t+1)\right).
\end{align*}
This together with Lemma \ref{lem:8.4} yields 
\begin{align*}
    \prod_{h=1}^{r}|\mathcal{F}_{h}| &< q \prod_{ h =1}^{r}{n- t-1 \brack k_{h}-t-1}<g(k_{r},t+1)\prod_{h=1}^{r-1}{n- t-1 \brack k_{h}-t-1},
\end{align*}
where the second inequality follows from Lemma \ref{lem:8.3}.

Now suppose $V_{a_{r-1}}= \{ a_{r} \}$. By Lemma \ref{lem:10.3}, our claim  and (\ref{eq:f}), we have
\begin{align*}
    \prod_{h=1}^{r}|\mathcal{F}_{h}| <&f(\tau_{t,a_{r-1}}^{\nu^\prime},\tau_{t,a_{r}}^{\nu^\prime},k_{a_{r}},\nu^\prime,t)\prod_{h \neq r-1}\left(q^{\frac{1}{r}}f(\tau_{t,a_{h}}^{\nu^\prime},\tau^{\nu^\prime}_{t,a_{h+1}},k_{a_{h+1}},\nu^\prime+1,t+1)\right)\\
    =&q^{1-\frac{1}{r}}\cdot f(\tau^{\nu^\prime}_{t,a_{r-1}},\tau^{\nu^\prime}_{t,a_{r-1}},k_{a_{r-1}},\nu^\prime,t)\prod_{h \neq a_{r-1}}f(\tau^{\nu^\prime}_{t,h},\tau^{\nu^\prime}_{t,h},k_{h},\nu^\prime+1,t+1).
\end{align*}
This together with Lemmas \ref{lem:8.4} and \ref{lem:8.3} yields
\begin{align*}
    \prod_{h=1}^{r}|\mathcal{F}_{h}| &< g\left(k_{a_{r-1}},t+1\right) \prod_{ h \neq a_{r-1}}{n- t-1 \brack k_{h}-t-1}\leq g(k_{r},t+1)\prod_{h=1}^{r-1}{n- t-1 \brack k_{h}-t-1},
\end{align*}
where the second inequality follows from Lemma \ref{lem:8.10}.
\end{proof}

\noindent {\bf Proof of Theorem \ref{theorem:4}.}\;We may assume that $\mathcal{F}_1,\mathcal{F}_{2},\ldots,\mathcal{F}_r$ are maximal. Recall that the parameter $\nu'$ is defined in (\ref{equdefnu'}). Also recall that for the families $\mathcal{F}_1,\mathcal{F}_{2},\ldots,\mathcal{F}_r$, the parameter $m$ is defined in (\ref{equdefm}), and we say that they satisfy property $\mathcal{P}$ if $m \leq \nu^\prime$ and (\ref{equpropertyp}) holds for each pair of distinct $i,j \in [r]$. By the definition of $\nu'$ and our assumption on $n$, we have $n\geq k_1+\nu'+4.$

If $\mathcal{F}_{1},\mathcal{F}_{2}, \dots,\mathcal{F}_{r}$ do not satisfy property $\mathcal{P}$, then Lemmas \ref{lem:10.10} and \ref{lem:10.17} give the strict inequality
\begin{align*}
    \prod_{ h =1}^{r}|\mathcal{F}_{h}| 
    < g(k_{r},t+1)\prod_{h =1}^{r-1}{n-t-1 \brack k_{h}-t-1}.
\end{align*}
Next, suppose that $\mathcal{F}_{1},\mathcal{F}_{2}, \dots,\mathcal{F}_{r}$ satisfy property $\mathcal{P}$. Note that $n\geq k_1+\nu'+4$ implies $n \geq\max\{k_{1}+t+1,k_{1}+\nu^\prime-t+3+\frac{t+2}{r-1}\}$, and so the assumptions of Lemmas \ref{lem:10.6} and \ref{lem:10.4} are satisfied. So by these two lemmas, we obtain
\begin{align*}
    \prod_{ h =1}^{r}|\mathcal{F}_{h}| 
    \leq g(k_{r},t+1)\prod_{h =1}^{r-1}{n-t-1 \brack k_{h}-t-1},
\end{align*}
and equality holds if and only if there exist $T \in {V \brack t+1}$ and $a \in[r]$ with $k_{a}=k_{r}$ such that $\mathcal{F}_{a}=\left\{F \in{V \brack k_{a}}: \dim(T \cap F) \geq t\right\}$ and $\mathcal{F}_{h}=\left\{F \in {V \brack k_{h}}: T \subseteq F\right\}\;(h \in[r] \setminus\{a\})$.\qed

\section*{Acknowledgments}
B. Lv is supported by the National Natural Science Foundation of China (12571347 \& 12131011), and the Beijing Natural Science Foundation (1252010).
\
\addcontentsline{toc}{chapter}{Bibliography}

{
	}
\end{document}